\title{Permanence of approximation properties for discrete quantum groups}
\author{Amaury Freslon}
\keywords{Quantum groups, approximation properties, relative amenability}
\subjclass[2010]{20G42, 46L65}
\address{Univ. Paris Diderot, Sorbonne Paris Cité, IMJ-PRG, UMR 7586 CNRS, Sorbonne Universit\'es, UPMC Univ. Paris 06, F-75013, Paris, France}
\email{amaury.freslon@imj-prg.fr}
\date{}
\theoremstyle{plain}
\newtheorem{thm}{Theorem}[section]
\newtheorem{prop}[thm]{Proposition}
\newtheorem{cor}[thm]{Corollary}
\newtheorem{lem}[thm]{Lemma}
\newtheorem*{thmetoile}{Theorem}
\theoremstyle{definition}
\newtheorem{de}[thm]{Definition}
\newtheorem{ex}[thm]{Example}
\theoremstyle{remark}
\newtheorem{rem}[thm]{Remark}
\DeclareMathOperator{\Ad}{Ad}
\DeclareMathOperator{\Id}{Id}
\DeclareMathOperator{\Ir}{Irr}
\DeclareMathOperator{\Pol}{Pol}
\DeclareMathOperator{\Supp}{Supp}
\DeclareMathOperator{\Tr}{Tr}
\renewcommand{\i}{\imath}
\renewcommand{\tilde}{\widetilde}
\newcommand{\B}{\mathcal{B}}
\newcommand{\C}{\mathbb{C}}
\newcommand{\D}{\Delta}
\newcommand{\E}{\mathbb{E}}
\newcommand{\G}{\mathbb{G}}
\newcommand{\HH}{\mathbb{H}}
\newcommand{\K}{\mathbb{K}}
\newcommand{\N}{\mathbb{N}}
\newcommand{\RR}{\mathcal{R}}
\newcommand{\Z}{\mathbb{Z}}
\newcommand{\h}{\widehat}
\newcommand{\w}{\omega}
\begin{document}

\begin{abstract}
We prove several results on the permanence of weak amenability and the Haagerup property for discrete quantum groups. In particular, we improve known facts on free products by allowing amalgamation over a finite quantum subgroup. We also define a notion of relative amenability for discrete quantum groups and link it with amenable equivalence of von Neumann algebras, giving additional permanence properties.
\end{abstract}

\maketitle

\section{Introduction}

A fruitful way of studying compact quantum groups is through their dual discrete quantum group. Geometric group theory is then a rich source of inspiration, even though results can seldom be straightforwardly transferred from the classical to the quantum setting. A very important property from this point of view is of course \emph{amenability}, the study of which culminated in R. Tomatsu's work \cite{tomatsu2006amenable}. However, some important examples of discrete quantum groups, often called "free quantum groups", are known not to be amenable by \cite[Cor 5]{banica1997groupe}. One should then look for weak versions of amenability like the \emph{Haagerup property} or \emph{weak amenability}. Some of these \emph{approximation properties} were investigated for unimodular discrete quantum groups by J. Kraus and Z.-J. Ruan in \cite{kraus1999approximation}, but no genuinely quantum example was found.

There recently was a regain of interest in the subject thanks to M. Brannan's breakthrough paper \cite{brannan2011approximation} proving the Haagerup property for the free orthogonal and free unitary quantum groups $O_{N}^{+}$ and $U_{N}^{+}$. On the one hand, his techniques were extended to free wreath products by F. Lemeux in \cite{lemeux2013fusion} and to more general classes of unitary easy quantum groups by the author in \cite{freslon2013fusion}. On the other hand, weak amenability was also established for $O_{N}^{+}$ and $U_{N}^{+}$ by the author in \cite{freslon2012examples} and later extended, with K. De Commer and M. Yamashita in \cite{freslon2013ccap} using monoidal equivalence.

At that point, it seemed reasonable to try to establish a general theory of approximation properties for discrete quantum groups. For the Haagerup property, this was done by M. Daws, P. Fima, A. Skalski and S. White in \cite{daws2014haagerup} using the more general setting of locally compact quantum groups. For weak amenability, this was part of the author's PhD thesis \cite{freslon2013proprietes}. In particular, we gave in the latter several permanence properties, i.e. group-theoretic constructions preserving weak amenability. One of the most important is the following one : if two discrete quantum groups are weakly amenable with Cowling-Haagerup constant equal to $1$, then their free product is again weakly amenable with Cowling-Haagerup constant equal to $1$, which was proved in \cite{freslon2012note}.

However, there are several other natural constructions to look at like direct products or inductive limits. Moreover, the techniques of E. Ricard and X. Qu \cite{ricard2005khintchine}, which are crucial in the proof of the aforementioned result on free products, also work for amalgamated free products under certain assumptions. For discrete groups, it is not very difficult to see that these assumptions are satisfied as soon as the amalgam is finite. That this also works for quantum groups will be proved in Section \ref{sec:permanence}. Note that even if it was certainly known to experts, the result for classical groups did not appear yet in the literature. Let us also mention that our proofs are based on multiplier techniques, so that they can also be applied to the Haagerup property, improving \cite[Prop 7.13]{daws2014haagerup} by removing the unmodularity assumption.

The last permanence property which we will study is \emph{relative amenability}. In fact, this was the first to be used in combination with weak amenability by M. Cowling and U. Haagerup in \cite[Thm 6.4]{cowling1989completely} to prove that lattices in different symplectic groups yield non-isomorphic von Neumann algebras. Their proof relied on \cite[Prop 6.2]{cowling1989completely} :

\begin{thmetoile}[Cowling-Haagerup]
Let $G$ be a locally compact group and let $\Gamma$ be a lattice in $G$. Then, $\Lambda_{cb}(\Gamma) = \Lambda_{cb}(G)$, where $\Lambda_{cb}$ denotes the \emph{Cowling-Haagerup constant}.
\end{thmetoile}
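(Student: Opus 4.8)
The plan is to establish the two inequalities $\Lambda_{cb}(\Gamma)\le\Lambda_{cb}(G)$ and $\Lambda_{cb}(G)\le\Lambda_{cb}(\Gamma)$ separately. Throughout I would use the Bo\.zejko--Fendler (Herz--Schur) description of completely bounded Fourier multipliers: for a locally compact group $H$, a function $u$ satisfies $\|u\|_{M_{cb}A(H)}\le K$ precisely when there exist a Hilbert space $\mathcal{H}$ and bounded maps $P,Q\colon H\to\mathcal{H}$ with $u(y^{-1}x)=\langle P(x),Q(y)\rangle$ for all $x,y$ and $\sup_x\|P(x)\|\cdot\sup_y\|Q(y)\|\le K$, the infimum of the right-hand side being exactly $\|u\|_{M_{cb}A(H)}$. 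Recall that $\Lambda_{cb}(H)$ is the least $C$ for which there is a net $(u_i)$ in $A(H)$ with $u_i\to 1$ uniformly on compact sets and $\limsup_i\|u_i\|_{M_{cb}A(H)}\le C$.

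The inequality $\Lambda_{cb}(\Gamma)\le\Lambda_{cb}(G)$ would be obtained by restriction, and in fact holds for any closed subgroup. Starting from a net $(u_i)$ realizing $\Lambda_{cb}(G)$, I would set $v_i:=u_i|_\Gamma$; restricting the maps $P_i,Q_i$ to $\Gamma$ shows $\|v_i\|_{M_{cb}A(\Gamma)}\le\|u_i\|_{M_{cb}A(G)}$, Herz's restriction theorem gives $v_i\in A(\Gamma)$, and since $\Gamma$ is discrete the uniform-on-compacta convergence $u_i\to 1$ restricts to $v_i\to 1$ pointwise, i.e. uniformly on the (finite) compact subsets of $\Gamma$. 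Hence $(v_i)$ witnesses $\Lambda_{cb}(\Gamma)\le\Lambda_{cb}(G)$.

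The converse inequality is where the lattice hypothesis is essential, through the existence of a $G$-invariant probability measure $\mu$ on $G/\Gamma$. I would fix a Borel section $\sigma\colon G/\Gamma\to G$ of the quotient map and form the cocycle $\alpha(g,x)=\sigma(gx)^{-1}g\,\sigma(x)\in\Gamma$. Given a finitely supported $v$ on $\Gamma$ with optimal Gilbert data $P,Q\colon\Gamma\to\mathcal{H}$, I would induce it to $G$ by
\[
u(g)=\int_{G/\Gamma} v\bigl(\alpha(g,y)\bigr)\,d\mu(y),
\]
and exhibit Gilbert data for $u$ valued in $L^2(G/\Gamma,\mu)\otimes\mathcal{H}$ by setting
\[
\widetilde{P}(g)(x)=P\bigl(\sigma(x)^{-1}g\,\sigma(g^{-1}x)\bigr),\qquad \widetilde{Q}(g)(x)=Q\bigl(\sigma(x)^{-1}g\,\sigma(g^{-1}x)\bigr).
\]
A direct computation using the cocycle identity and the substitution $y=g_1^{-1}x$ (legitimate since $\mu$ is $G$-invariant) yields $\langle\widetilde{P}(g_1),\widetilde{Q}(g_2)\rangle=u(g_2^{-1}g_1)$, so $u$ is a completely bounded multiplier; and because $\mu$ is a \emph{probability} measure, $\|\widetilde{P}\|_\infty\|\widetilde{Q}\|_\infty\le\sup\|P\|\,\sup\|Q\|=\|v\|_{M_{cb}A(\Gamma)}$, giving $\|u\|_{M_{cb}A(G)}\le\|v\|_{M_{cb}A(\Gamma)}$. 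Applying this to a net $(v_n)$ realizing $\Lambda_{cb}(\Gamma)$ and using dominated convergence (with the uniform bound $\sup_n\|v_n\|_\infty\le\sup_n\|v_n\|_{M_{cb}A(\Gamma)}<\infty$) shows $u_n\to 1$ pointwise on $G$.

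The main obstacle is the transfer of the \emph{approximation} conditions rather than the norm bound. I must upgrade the pointwise convergence $u_n\to 1$ to uniform convergence on compact subsets of $G$, and ensure that the $u_n$ lie in the class defining $\Lambda_{cb}(G)$, i.e. in $A(G)$ (or are compactly supported and continuous). Two points require care: the section $\sigma$ is only Borel, so the maps $\widetilde{P},\widetilde{Q}$ and hence $u_n$ are a priori merely Borel, and for a non-uniform lattice a fundamental domain is non-compact, so $u_n$ need not be compactly supported. I would resolve these by a regularization step --- convolving or multiplying the $u_n$ with a suitable approximate identity and compactly supported cut-offs drawn from $A(G)$, using that $A(G)$ is an ideal in $M_{cb}A(G)$ --- together with an equicontinuity argument deducing uniform-on-compacta convergence from the uniform bound on $\|v_n\|_\infty$ and the local behaviour of $\alpha(g,\cdot)$ as $g$ ranges over a compact set. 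Carrying out this regularization without degrading the $\limsup$ of the cb norms is the delicate part of the argument; once it is done, the induced net witnesses $\Lambda_{cb}(G)\le\Lambda_{cb}(\Gamma)$ and the proof is complete.
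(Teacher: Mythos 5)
This statement is not actually proved in the paper: it is quoted from \cite[Prop 6.2]{cowling1989completely} as motivation, and what the paper itself establishes (Corollary \ref{thm:warelativelyamenable}, via Theorem \ref{thm:amenableequivalence}) is a quantum analogue obtained by a genuinely different, von Neumann algebraic route --- an invariant mean yields a norm-one projection on a crossed product, hence amenable equivalence of the group von Neumann algebras, and one concludes with \cite[Thm 4.9]{anantharaman1995amenable} and Kraus--Ruan. Your proposal instead follows Cowling and Haagerup's original multiplier-theoretic strategy, and its core is sound: the restriction inequality $\Lambda_{cb}(\Gamma)\leqslant\Lambda_{cb}(G)$ is correct for any closed subgroup, and the cocycle induction with Gilbert data, using that $\mu$ is an invariant \emph{probability} measure, does give $\|u\|_{M_{cb}A(G)}\leqslant\|v\|_{M_{cb}A(\Gamma)}$ together with pointwise convergence $u_n\to 1$.

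The genuine gap is the step you isolate at the end, and your proposed repair fails exactly where the theorem has content, namely for non-uniform lattices such as $SL(2,\Z)$ in $SL(2,\R)$ (the case of interest in \cite{cowling1989completely}). Multiplying by cut-offs $w_n\in A(G)$ only gives $\|u_nw_n\|_{M_{cb}A(G)}\leqslant\|u_n\|_{M_{cb}A(G)}\|w_n\|_{M_{cb}A(G)}$, so to preserve the $\limsup$ you would need cut-offs with $w_n\to 1$ uniformly on compacta and $\|w_n\|_{M_{cb}A(G)}\to 1$; but such a net is by definition a witness of $\Lambda_{cb}(G)=1$, i.e.\ of (a strong form of) the very conclusion sought, so the argument is circular, and with merely bounded cut-offs one only gets the trivial estimate $\Lambda_{cb}(G)\leqslant\Lambda_{cb}(\Gamma)\Lambda_{cb}(G)$. (Convolution smoothing preserves the norm and restores continuity, but produces neither decay nor membership in $A(G)$; for \emph{uniform} lattices there is no problem, since $u_n$ is then compactly supported, a cut-off equal to $1$ on its support changes nothing, and compactly supported elements of $M_{cb}A(G)$ lie in $A(G)$ by the ideal property.) The standard repair is to exploit that your finitely supported $v_n$ automatically belong to $A(\Gamma)$: writing $v_n(\gamma)=\langle\lambda_{\Gamma}(\gamma)\xi_n,\eta_n\rangle$ with $\xi_n,\eta_n\in\ell^{2}(\Gamma)$, the same induction formula exhibits $u_n$ as a coefficient of $\mathrm{Ind}_{\Gamma}^{G}\lambda_{\Gamma}\cong\lambda_{G}$ at the vectors $1\otimes\xi_n$ and $1\otimes\eta_n$ in $L^{2}(G/\Gamma,\mu)\otimes\ell^{2}(\Gamma)\cong L^{2}(G)$; finiteness of the covolume enters again here, making the constant lift square-integrable. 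Hence $u_n\in A(G)$, in particular $u_n$ is continuous, with no regularization and no norm loss, the cb bound coming independently from the Gilbert data. Your equicontinuity idea is then the right way to upgrade pointwise to uniform-on-compacta convergence: one has $|u_n(g)-u_n(g')|\leqslant 2C\,\mu\{y:\alpha(g,y)\neq\alpha(g',y)\}^{1/2}$, and by invariance this measure equals the normalized Haar measure of $\mathcal{F}\setminus(g^{-1}g')\mathcal{F}$ for a fundamental domain $\mathcal{F}$, which tends to $0$ as $g'\to g$ because $\chi_{\mathcal{F}}\in L^{1}(G)$ (finite covolume once more) and translation is continuous on $L^{1}(G)$. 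As written, however, your proposal is incomplete: the announced regularization cannot be carried out in the stated generality.
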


The core of the proof is the finite covolume assumption, which is however too restrictive in our context. In \cite{eymard1972moyennes}, P. Eymard defined  a subgroup $H\subset G$ to be \emph{co-amenable} if there exists a mean on the homogeneous space $G/H$ (i.e. a state on $\ell^{\infty}(G/H)$) which is invariant with respect to the translation action of $G$ (so lattices in locally compact groups are in particular co-amenable). He investigated this property as a weakening of the notion of amenability since a group is amenable if and only if all its subgroups are co-amenable. It was proved in \cite[Paragraphe 4.10]{anantharaman1995amenable} that if $H$ is co-amenable in $G$, then $\Lambda_{cb}(H) = \Lambda_{cb}(G)$. Since this proof makes use of the machinery of von Neumann algebras and correspondences, it may be used as a starting point for a generalization to quantum groups, which will be done in Section \ref{sec:relative}.

Let us briefly outline the content of this work. In Section \ref{sec:preliminaries}, we introduce the necessary material concerning quantum groups and their actions as well as approximation properties. In Section \ref{sec:permanence} we give several permanence results. The main one is the stability of weak amenability under free products amalgamated over finite quantum subgroups, if the Cowling-Haagerup constant is equal to $1$. Finally, Section \ref{sec:relative} deals with relative amenability. We define it and prove that it is equivalent to amenable equivalence of the associated von Neumann algebras. We then study the simplest examples, namely finite index quantum subgroups and eventually prove permanence properties for relatively amenable discrete quantum groups.

\subsection*{Acknowledgments}

The results of this work were part of the author's PhD thesis and he wishes to thank his advisor E. Blanchard for his supervision. The author is also thankful to S. Vaes for pointing out a mistake in a preliminary version, to C. Anantharaman-Delaroche for discussions on amenable equivalence and to A. Skalski and the anonymous referee for their useful comments.

\section{Preliminaries}\label{sec:preliminaries}

In this paper, we will denote by $\otimes$ the tensor product of Hilbert spaces and the minimal tensor product of C*-algebras, and by $\overline{\otimes}$ the tensor product of von Neumann algebras. Once the spaces involved are clear, we will denote tensor products of elements or maps by $\otimes$.

\subsection{Discrete quantum groups}

Discrete quantum groups will be seen as duals of compact quantum groups in the sense of S.L. Woronowicz. We briefly present this theory as introduced in \cite{woronowicz1995compact}, i.e. in the C*-algebraic setting. We will then explain how one passes to the von Neumann algebraic setting, which will prove more convenient when dealing with relative amenability (see Remark \ref{rem:relativeC*}). In the sequel, all tensor products of C*-algebras are minimal.

\begin{de}
A \emph{compact quantum group} $\G$ is a pair $(C(\G), \Delta)$ where $C(\G)$ is a unital C*-algebra and $\Delta : C(\G)\rightarrow C(\G)\otimes C(\G)$ is a unital $*$-homomorphism such that
\begin{equation*}
(\Delta\otimes \i)\circ\Delta = (\i\otimes\Delta)\circ\Delta
\end{equation*}
and both $\Delta(C(\G))(1\otimes C(\G))$ and $\Delta(C(\G))(C(\G)\otimes 1)$ span dense subspaces of $C(\G)\otimes C(\G)$.
\end{de}

The main feature of compact quantum groups is the existence of a \emph{Haar state}, which is both left and right invariant (see \cite[Thm 1.3]{woronowicz1995compact}).

\begin{prop}
Let $\G$ be a compact quantum group. Then, there exists a unique state $h$ on $\G$, called the Haar state, such that for all $a\in C(\G)$,
\begin{eqnarray*}
(\i\otimes h)\circ \D(a) =h(a).1 \\
(h\otimes \i)\circ \D(a) =h(a).1
\end{eqnarray*}
\end{prop}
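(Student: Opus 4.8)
The plan is to read the two invariance conditions as \emph{absorption} properties for the convolution of states. For states $\mu,\nu$ on $C(\G)$ set $\mu * \nu = (\mu\otimes\nu)\circ\D$; since $\D$ is a unital $*$-homomorphism this is again a state, and coassociativity makes $*$ an associative, separately weak-$*$ continuous product on the weak-$*$ compact convex state space. The two displayed equalities say precisely that $\mu * h = h = h * \mu$ for every state $\mu$, i.e. that $h$ is a two-sided zero for $*$. Uniqueness is then immediate: if $h$ and $h'$ both satisfy the conclusion, then using left invariance of $h'$ one computes $h * h' = h'$, while using right invariance of $h$ one computes $h * h' = h$, whence $h = h'$.

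For existence I would first fix a faithful state $\w$ on $C(\G)$ (available in the relevant separable setting, or after passing to $\Pol(\G)$) and form the Cesàro averages $\w_n = \frac{1}{n}\sum_{k=1}^{n}\w^{* k}$. By weak-$*$ compactness of the state space, $(\w_n)$ has a cluster point $h$. A telescoping computation gives $\w * \w_n - \w_n = \frac{1}{n}(\w^{*(n+1)} - \w)$, which tends to $0$ in norm, so in the limit $\w * h = h$ and symmetrically $h * \w = h$; since $h$ is a weak-$*$ limit of convex combinations of the $\w^{* k}$ and $\w^{* k} * h = h$ for all $k$, one also gets $h * h = h$. At this stage $h$ is an idempotent state absorbing the single state $\w$.

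It is convenient to repackage the idempotent $h$ into the map $E = (\i\otimes h)\circ\D$. Coassociativity together with $h * h = h$ yields $E\circ E = E$ and $h\circ E = h$, so $E$ is a unital completely positive projection onto an operator system $B = E(C(\G))$ satisfying $\D(B)\subset C(\G)\otimes B$; moreover left invariance of $h$ is equivalent to $B = \C 1$. The remaining task is therefore to show that this fixed-point space collapses to the scalars.

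The main obstacle is exactly this collapse, i.e. upgrading absorption of the single state $\w$ into absorption of \emph{every} state, and it is here that both the faithfulness of $\w$ and the density (cancellation) laws $\overline{\D(C(\G))(1\otimes C(\G))} = C(\G)\otimes C(\G) = \overline{\D(C(\G))(C(\G)\otimes 1)}$ become indispensable, since they are the only substitutes available for the group-theoretic translations one would use classically. Concretely I would fix $a$ and aim to show that the defect $(\i\otimes h)\D(a) - h(a)1$ vanishes; combining the relations $E\circ E = E$, $h\circ E = h$ and $h * \w = h$ with the cancellation law, one expresses this defect in a form annihilated by $\w$, and faithfulness of $\w$ then forces it to be $0$. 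The symmetric computation, using the other density law and $\w * h = h$, yields right invariance, so that $h$ is the Haar state. I expect the delicate point to be precisely this passage from $\w$-absorption to full bi-invariance, as it is the step in which the quantum cancellation laws, rather than any group-like structure, do the essential work.
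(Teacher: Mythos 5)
The paper itself does not prove this proposition: it is quoted as background with a citation to \cite[Thm 1.3]{woronowicz1995compact}, so your proposal has to be measured against the standard proof of Woronowicz (see also Maes--Van Daele). Your skeleton is exactly that proof's skeleton: the convolution $\mu\ast\nu=(\mu\otimes\nu)\circ\D$, the observation that invariance of $h$ is the same as $\mu\ast h=h=h\ast\mu$ for every state $\mu$, the two-line uniqueness argument, and the Ces\`aro averaging producing an idempotent cluster point absorbing one state --- all of this is correct. The genuine gap is at the step you yourself defer ("I would fix $a$ and aim to show\ldots", "I expect the delicate point to be\ldots"), and that step is not a detail: it is the entire content of the theorem. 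Concretely, from $\w\ast h=h$ you only get $\w(d)=0$ for the defect $d=(\i\otimes h)\D(a)-h(a)1$, and since $d$ is \emph{not} positive, faithfulness of $\w$ yields nothing from this. What you need is $\w(d^{*}d)=0$ (or $h(d^{*}d)=0$), and manufacturing that positive quantity is exactly where the work lies; the standard argument obtains it from a Cauchy--Schwarz/Kadison--Schwarz estimate for the convolution operators $(\i\otimes\w^{\ast k})\circ\D$ combined with the cancellation (density) laws. Your outline asserts that "one expresses this defect in a form annihilated by $\w$" but never carries this out, so the existence half of the proposal is a plan, not a proof.

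There is a second, independent flaw: the reduction to a faithful state is not available in the stated generality. The proposition concerns an arbitrary compact quantum group; $C(\G)$ need not be separable, and a non-separable unital C*-algebra need not admit any faithful state. The fallback "after passing to $\Pol(\G)$" is circular, because the construction of $\Pol(\G)$ and, above all, its density in $C(\G)$ are themselves consequences of Peter--Weyl theory built on the Haar state you are trying to construct. The standard proof avoids faithfulness altogether: one proves the key lemma that a state absorbing $\w$ automatically absorbs every positive functional dominated by a multiple of $\w$, applies the Ces\`aro construction to the average of each finite family of states, and then uses directedness of these finite families, weak-$*$ compactness, and the separate weak-$*$ continuity of convolution (which you correctly established) to extract a single state absorbing \emph{all} states. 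If you replace the faithfulness device by this domination/directedness argument and actually prove the key lemma via Cauchy--Schwarz plus cancellation, your outline becomes Woronowicz's proof.
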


Let $(L^{2}(\G), \xi_{h})$ be the associated GNS construction and let $C_{\text{red}}(\G)$ be the image of $C(\G)$ under the GNS map, called the \emph{reduced form} of $\G$. Let $W$ be the unique (by \cite[Thm 4.1]{woronowicz1995compact}) unitary operator on $L^{2}(\mathbb{G})\otimes L^{2}(\mathbb{G})$ satisfying, for all for $\xi \in L^{2}(\mathbb{G})$ and $a\in C(\mathbb{G})$,
\begin{equation*}
W^{*}(\xi\otimes a\xi_{h}) = \Delta(a)(\xi\otimes \xi_{h})
\end{equation*} 
and let $\h{W} := \Sigma W^{*}\Sigma$. Then, $W$ is a \emph{multiplicative unitary} in the sense of \cite{baaj1993unitaires}, i.e. $W_{12}W_{13}W_{23} = W_{23}W_{12}$ and we have the following equalities :
\begin{equation*}
C_{\text{red}}(\mathbb{G}) = \overline{\rm{span}}(\i\otimes \mathcal{B}(L^{2}(\G))_{*})(W) \text{ and } \Delta(x) = W^{*}(1\otimes x)W
\end{equation*}
for all $x\in C_{\text{red}}(\mathbb{G})$. Moreover, we can define the \emph{dual discrete quantum group} $\h{\G} = (C_{0}(\h{\G}), \h{\Delta})$ by
\begin{equation*}
C_{0}(\h{\mathbb{G}}) = \overline{\rm{span}}(\mathcal{B}(L^{2}(\G))_{*}\otimes\i)(W) \text{ and } \h{\Delta}(x) = \h{W}^{*}(1\otimes x)\h{W}
\end{equation*}
for all $x\in C_{0}(\mathbb{G})$. The two von Neumann algebras associated to these quantum groups are
\begin{equation*}
L^{\infty}(\G) = C_{\text{red}}(\G)'' \text{ and } \ell^{\infty}(\h{\G})=C_{0}(\h{\G})''
\end{equation*}
where the bicommutants are taken in $\B(L^{2}(\G))$. The coproducts extend to normal maps on these von Neumann algebras and one can prove that $W\in L^{\infty}(\G)\overline{\otimes} \ell^{\infty}(\h{\G})$. Moreover, the Haar state of $\G$ extends to a normal state on $L^{\infty}(\G)$. In order to give an alternative description of $\ell^{\infty}(\h{\G})$, we need to define representations of quantum groups.

\begin{de}
Let $\G$ be a compact quantum group.
\begin{itemize}
\item A \emph{representation} of $\G$ on a Hilbert space $H$ is an invertible operator $U\in L^{\infty}(\G)\overline{\otimes} \B(H)$ such that $(\D\otimes \i)(U) = U_{13}U_{23}$.
\item A \emph{representation} of $\h{\G}$ on a Hilbert space $H$ is an invertible operator $U\in \ell^{\infty}(\h{\G})\overline{\otimes} \B(H)$ such that $(\h{\D}\otimes \i)(U) = U_{13}U_{23}$.
\item A representation is said to be \emph{unitary} if the operator $U$ is unitary.
\end{itemize}
\end{de}

The linear span of coefficients of unitary representations of $\G$ forms a Hopf-$*$-algebra $\Pol(\G)$ which is dense in $C(\G)$. Let $\Ir(\G)$ be the set of isomorphism classes of irreducible unitary representations of $\G$ (which are all finite-dimensional by \cite[Thm 1.2]{woronowicz1995compact}). If $\alpha\in \Ir(\G)$, we will denote by $u^{\alpha}$ a representative of $\alpha$ and by $H_{\alpha}$ the finite-dimensional Hilbert space on which it acts. There are $*$-isomorphisms
\begin{equation*}
C_{0}(\h{\G}) = \bigoplus_{\alpha\in \Ir(\G)}\B(H_{\alpha})\text{ and }\ell^{\infty}(\h{\G}) = \prod_{\alpha\in \Ir(\G)}\B(H_{\alpha}).
\end{equation*}
The minimal central projection in $\ell^{\infty}(\h{\G})$ corresponding to the identity of $\B(H_{\alpha})$ will be denoted by $p_{\alpha}$ and there exist positive matrices $(Q_{\alpha})_{\alpha\in \Ir(\G)}$ such that the two normal semi-finite faithful (in short n.s.f.) weights
\begin{eqnarray}\label{eq:haarweight1}
h_{L} : x & \mapsto & \sum_{\alpha\in \Ir(\G)}\Tr(Q_{\alpha})\Tr(Q_{\alpha}(xp_{\alpha})) \\ \label{eq:haarweight2}
h_{R} : x & \mapsto & \sum_{\alpha\in \Ir(\G)}\Tr(Q_{\alpha}^{-1})\Tr(Q_{\alpha}^{-1}(xp_{\alpha})) \\ \nonumber
\end{eqnarray}
on $\ell^{\infty}(\h{\G})$ are respectively left and right invariant. These so-called \emph{Haar weights} are both unique up to multiplication by a scalar.

\subsection{Actions on von Neumann algebras}

Actions of quantum groups can be defined both on C*-algebras and on von Neumann algebras. However, only the latter will be used in this paper. The main feature, which will prove of importance later on, is the existence of a unitary implementation. The standard reference on this subject is \cite{vaes2001unitary}.

\begin{de}
A \emph{(left) action} of $\h{\G}$ on a von Neumann algebra $M$ is a unital normal $*$-homomor\-phism $\rho : M\rightarrow \ell^{\infty}(\h{\G})\overline{\otimes} M$ such that
\begin{equation*}
(\h{\D}\otimes \i)\circ\rho = (\i\otimes \rho)\circ\rho.
\end{equation*}
\end{de}

The \emph{fixed point algebra} of the action $\rho$ is the subalgebra $M^{\rho} = \{x\in M, \rho(x) = 1\otimes x\}$. A subalgebra $N$ of $M$ is said to be \emph{stable} under the action if $\rho(N)\subset \ell^{\infty}(\h{\G})\overline{\otimes} N$. In that case, there is a \emph{restricted action} of $\h{\G}$ on $N$ which will still be denoted by $\rho$. The crossed-product construction in this setting generalizes the classical definition.

\begin{de}
Let $\h{\G}$ be a discrete quantum group acting on a von Neumann algebra $M$. The \emph{crossed-product} $\h{\G}\ltimes_{\rho} M$ is the von Neumann subalgebra of $\B(L^{2}(\G))\overline{\otimes} M$ generated by $\rho(M)$ and $L^{\infty}(\G)\otimes 1$.
\end{de}

The crossed-product is endowed with a \emph{dual action} $\h{\rho}$ of $\G^{\text{op}}$ (i.e. with respect to the flipped coproduct) defined by
\begin{equation*}
\left\{\begin{array}{ccc}
\h{\rho}(\rho(m)) & = & 1\otimes \rho(m) \\
\h{\rho}(a\otimes 1) & = & [(\sigma\circ\D)(a)]\otimes 1
\end{array}\right.
\end{equation*}
for all $m\in M$ and $a\in L^{\infty}(\G)$. Let $\h{\G}$ be a discrete quantum group and let $M$ be a von Neumann algebra together with a fixed n.s.f. weight $\theta$ with GNS construction $(K, \imath, \Lambda_{\theta})$. It is proven in \cite{vaes2001unitary} that any action of $\h{\G}$ on $M$ is unitarily implementable, i.e. there exists a unitary
\begin{equation*}
U^{\rho}\in \ell^{\infty}(\G)\overline{\otimes} \B(K)
\end{equation*}
which is the adjoint of a representation of $\h{\G}$ and such that
\begin{equation*}
\rho(x) = U^{\rho}(1\otimes x)(U^{\rho})^{*}
\end{equation*}
for all $x\in M$.

\subsection{Quantum subgroups and quotients}

Let us give some details concerning the notions of discrete quantum subgroups and quotients, which will appear in all the constructions of this work. Let $\G$ be a compact quantum group and let $\HH$ be another compact quantum group such that $C(\HH)\subset C(\G)$ and the coproduct of $\HH$ is given by the restriction of the coproduct of $\G$. Then, $\h{\HH}$ is said to be a \emph{discrete quantum subgroup} of $\h{\G}$. The following important fact was proved in \cite[Prop 2.2]{vergnioux2004k}.

\begin{prop}[Vergnioux]\label{prop:conditionalexpectationsubgroup}
Let $\h{\G}$ be a discrete quantum group, let $\h{\HH}$ be a discrete quantum subgroup and denote the respective Haar states of $\G$ and $\HH$ by $h_{\G}$ and $h_{\HH}$. Then, there exists a faithful conditional expectation $\E_{\HH} : C_{\text{red}}(\G)\rightarrow C_{\text{red}}(\HH)$ such that $h_{\HH}\circ\E_{\HH} = h_{\G}$. Moreover, $\E_{\HH}$ extends to a faithful conditional expectation from $L^{\infty}(\G)$ to $L^{\infty}(\HH)$, still denoted by $\E_{\HH}$.
\end{prop}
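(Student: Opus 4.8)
The plan is to construct $\E_{\HH}$ first on the dense Hopf-$*$-algebra $\Pol(\G)$ by a Peter--Weyl argument, and then to upgrade it to the C*- and von Neumann-algebraic completions by realizing it as a compression by a Jones projection. Two elementary facts come first. Since $\Delta_{\HH}$ is the restriction of $\Delta_{\G}$, the restriction of $h_{\G}$ to $\Pol(\HH)$ is a $\Delta_{\HH}$-invariant state, hence equals $h_{\HH}$ by uniqueness of the Haar state. Moreover, for finite-dimensional corepresentations $u,w$ the intertwiner equation $w(1\otimes T)=(1\otimes T)u$ is purely algebraic in the matrix coefficients and does not see which coproduct is used, so $\Mor_{\HH}(u,w)=\Mor_{\G}(u,w)$ for corepresentations living in $\Pol(\HH)$. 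Consequently $\Ir(\HH)$ embeds into $\Ir(\G)$ and $\Pol(\HH)=\bigoplus_{\alpha\in\Ir(\HH)}C(\alpha)$ is a sub-sum of the Peter--Weyl decomposition $\Pol(\G)=\bigoplus_{\alpha\in\Ir(\G)}C(\alpha)$, where $C(\alpha)$ denotes the space of coefficients of $\alpha$.

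I would then define $\E_{\HH}:\Pol(\G)\to\Pol(\HH)$ as the projection onto the $\HH$-isotypic part, which, since the spaces $C(\alpha)$ are mutually orthogonal, is the same as the orthogonal projection onto $\Pol(\HH)$ for the GNS inner product $\langle x,y\rangle=h_{\G}(y^{*}x)$. That $h_{\HH}\circ\E_{\HH}=h_{\G}$ is then immediate, as only the trivial component survives $h_{\G}$ and $h_{\G}|_{\Pol(\HH)}=h_{\HH}$. The hard part is the bimodule identity $\E_{\HH}(axb)=a\E_{\HH}(x)b$ for $a,b\in\Pol(\HH)$, which I would reduce to the fusion-theoretic statement that $\Pol(\HH)^{c}:=\bigoplus_{\alpha\notin\Ir(\HH)}C(\alpha)$ is a $\Pol(\HH)$-subbimodule. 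Indeed $C(\alpha)C(\beta)\subseteq\sum_{\gamma\prec\alpha\otimes\beta}C(\gamma)$, and if $\beta\in\Ir(\HH)$ while some $\gamma\prec\alpha\otimes\beta$ lay in $\Ir(\HH)$, then $\alpha\prec\gamma\otimes\overline{\beta}$ would force $\alpha\in\Ir(\HH)$, since $\gamma\otimes\overline{\beta}$ is a corepresentation of $\HH$; this gives $\Pol(\HH)^{c}\cdot\Pol(\HH)\subseteq\Pol(\HH)^{c}$, and symmetrically on the left. Combined with $\E_{\HH}|_{\Pol(\HH)}=\Id$ this yields the bimodule property.

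Finally I would pass to the completions through the Jones projection $e:L^{2}(\G)\to L^{2}(\HH)$, where $L^{2}(\HH)=\overline{\Pol(\HH)\xi_{h}}$ is identified with the GNS space of $(C_{\text{red}}(\HH),h_{\HH})$ using the first fact. A short computation with the right-module property gives $exe=\E_{\HH}(x)e$ for $x\in\Pol(\G)$; since $y\mapsto ye$ is isometric on $C_{\text{red}}(\HH)$, continuity extends this to all $x\in C_{\text{red}}(\G)$ and exhibits $\E_{\HH}$ as a unital completely positive map into $C_{\text{red}}(\HH)$. As $C_{\text{red}}(\HH)$ commutes with $e$, the bimodule property survives, so $\E_{\HH}$ is a genuine conditional expectation; faithfulness follows from $h_{\G}=h_{\HH}\circ\E_{\HH}$ and faithfulness of $h_{\G}$ on $C_{\text{red}}(\G)$. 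Normality of the compression $x\mapsto exe$, together with the identification of the weak closure of $C_{\text{red}}(\HH)e$ with $L^{\infty}(\HH)e$, then extends $\E_{\HH}$ to a normal faithful conditional expectation $L^{\infty}(\G)\to L^{\infty}(\HH)$.

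I expect the two genuinely delicate points to be the fusion-closedness underlying the bimodule property and the verification that the compression stays inside the smaller algebra rather than merely in $e\B(L^{2}(\G))e$. An alternative route to the latter would be to invoke Takesaki's theorem once one checks that $L^{\infty}(\HH)$ is globally invariant under the modular automorphism group of $h_{\G}$, which holds because the Woronowicz characters of $\HH$ are the restrictions of those of $\G$; but the Jones-projection construction has the advantage of producing the C*-algebraic statement directly.
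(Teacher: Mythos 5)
The paper itself offers no proof of this proposition --- it is quoted from \cite[Prop 2.2]{vergnioux2004k} --- so your argument can only be judged on its own terms. Its core is correct and is the natural route: the restriction of $h_{\G}$ to $\Pol(\HH)$ is invariant and hence equals $h_{\HH}$; intertwiner spaces do not depend on which coproduct is used, so $\Ir(\HH)\subset\Ir(\G)$; the isotypic projection $\E_{\HH}\colon \Pol(\G)\to\Pol(\HH)$ preserves the Haar state, and your Frobenius reciprocity argument for the bimodule property is exactly right (if $\alpha\notin\Ir(\HH)$, $\beta\in\Ir(\HH)$ and some $\gamma\prec\alpha\otimes\beta$ lay in $\Ir(\HH)$, then $\alpha\prec\gamma\otimes\overline{\beta}$ would exhibit $\alpha$ as a subrepresentation of a corepresentation of $\HH$, forcing $\alpha\in\Ir(\HH)$). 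The computation $exe=\E_{\HH}(x)e$ on $\Pol(\G)$, and the complete positivity, state-preservation and faithfulness that follow, are also fine.

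There is, however, one step that you use repeatedly without justification, and it is precisely the non-formal content of Vergnioux's statement: the identification of $C_{\text{red}}(\HH)$, defined through the GNS representation of $h_{\HH}$ on $L^{2}(\HH)$, with the closure $A$ of $\Pol(\HH)$ inside $C_{\text{red}}(\G)$. You invoke it when you write that ``$C_{\text{red}}(\HH)$ commutes with $e$'' and when you identify the weak closure of $C_{\text{red}}(\HH)e$ with $L^{\infty}(\HH)e$; without it, what you have built is a u.c.p.\ map onto an externally defined algebra, not a conditional expectation onto a subalgebra of $C_{\text{red}}(\G)$. Only one inequality is automatic, namely $\|y\|_{\B(L^{2}(\HH))}\leqslant \|y\|_{\B(L^{2}(\G))}$ for $y\in\Pol(\HH)$ (restriction to an invariant subspace); the converse is not, because in general the restriction to a subalgebra of a GNS representation need not be weakly contained in the GNS representation of the restricted state (this fails, for instance, for the C*-algebra generated by $(\lambda\oplus 1)(\F_{2})$ acting on $\ell^{2}(\F_{2})\oplus\C$, with the vector state given by $\delta_{e}\oplus 0$). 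Fortunately the gap closes with ingredients you already have: $e$ commutes with $\Pol(\HH)$, hence with $A$, so compression by $e$ is a $*$-homomorphism from $A$ onto $C_{\text{red}}(\HH)$ (its image is closed and contains the dense subalgebra $\Pol(\HH)$); it is injective because it intertwines $h_{\G}\vert_{A}$ with $h_{\HH}$ and $h_{\G}$ is faithful on $C_{\text{red}}(\G)$; and injective $*$-homomorphisms are isometric, so $A\cong C_{\text{red}}(\HH)$ canonically, fixing $\Pol(\HH)$. The same argument applied to $M=\Pol(\HH)''\subset\B(L^{2}(\G))$, using normality of the compression and the fact that $\xi_{h}$ is separating for $L^{\infty}(\G)$, is what legitimizes your final paragraph; your Takesaki alternative is equally valid for the von Neumann statement, but it needs the same identification of $\Pol(\HH)''$ with $L^{\infty}(\HH)$ before it can be invoked.
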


Note that the inclusion $C(\HH)\subset C(\G)$ extends to inclusions of matrix algebras $M_{n}(C(\HH))\subset M_{n}(C(\G))$ so that any finite-dimensional representation of $\HH$ can be seen as a finite-dimensional representation of $\G$. This inclusion obviously preserves intertwiners, so that we have an inclusion $\Ir(\HH)\subset \Ir(\G)$. Let us define a central projection
\begin{equation*}
p_{\HH} = \sum_{\alpha\in \Ir(\HH)} p_{\alpha}\in \ell^{\infty}(\h{\G}).
\end{equation*}
We can use $p_{\HH}$ to describe the structure of $\h{\HH}$ from the structure of $\h{\G}$ (see \cite[Prop 2.3]{fima2008kazhdan} for details).

\begin{prop}\label{prop:subgroup}
With the notations above, we have
\begin{enumerate}
\item $\h{\D}(p_{\HH})(p_{\HH}\otimes 1) = p_{\HH}\otimes p_{\HH}$
\item $\ell^{\infty}(\h{\HH}) = p_{\HH}\ell^{\infty}(\h{\G})$
\item $\h{\D}_{\HH}(a) = \h{\D}(a)(p_{\HH}\otimes p_{\HH})$
\item If $h_{L}$ is a left Haar weight for $\h{\G}$, then $h_{L, \HH} : x\mapsto h_{L}(xp_{\HH})$ is a left Haar weight for $\h{\HH}$.
\end{enumerate}
\end{prop}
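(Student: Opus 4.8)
The plan is to work throughout with the block decompositions $\ell^{\infty}(\h{\G}) = \prod_{\alpha\in\Ir(\G)}\B(H_{\alpha})$ and $\ell^{\infty}(\h{\HH}) = \prod_{\alpha\in\Ir(\HH)}\B(H_{\alpha})$, together with the dual description of $\h{\D}$ in terms of the fusion rules. Everything rests on the fact that the inclusion $C(\HH)\subset C(\G)$ preserves tensor products and intertwiner spaces, so that $\Ir(\HH)$ sits inside $\Ir(\G)$ as a subset which is stable under conjugation and under taking irreducible subrepresentations of tensor products: if $\alpha,\beta\in\Ir(\HH)$ then every $\gamma\subset\alpha\otimes\beta$ again lies in $\Ir(\HH)$, and this operation agrees with the one computed in $\G$.

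For (1), I would first recall the standard support fact that $(p_{\alpha}\otimes p_{\beta})\h{\D}(p_{\gamma})\neq 0$ if and only if $\gamma\subset\alpha\otimes\beta$. Using $\h{\D}(1)=1$, for $\alpha,\beta\in\Ir(\HH)$ one has $p_{\alpha}\otimes p_{\beta} = \sum_{\gamma\in\Ir(\G)}(p_{\alpha}\otimes p_{\beta})\h{\D}(p_{\gamma})$, and by the closure property only $\gamma\in\Ir(\HH)$ contribute; summing over $\alpha,\beta\in\Ir(\HH)$ gives $(p_{\HH}\otimes p_{\HH})\h{\D}(p_{\HH}) = p_{\HH}\otimes p_{\HH}$, i.e. $p_{\HH}\otimes p_{\HH}\leq\h{\D}(p_{\HH})$ since $p_{\HH}$ is central. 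Conversely, by Frobenius reciprocity $\gamma\subset\alpha\otimes\beta$ with $\alpha,\gamma\in\Ir(\HH)$ forces $\beta\subset\bar\alpha\otimes\gamma$, hence $\beta\in\Ir(\HH)$; this shows $\h{\D}(p_{\HH})(p_{\HH}\otimes 1)$ is supported on $\Ir(\HH)\times\Ir(\HH)$, so it equals $(p_{\HH}\otimes p_{\HH})\h{\D}(p_{\HH})(p_{\HH}\otimes 1)$. Combining the two facts yields $\h{\D}(p_{\HH})(p_{\HH}\otimes 1) = (p_{\HH}\otimes p_{\HH})(p_{\HH}\otimes 1) = p_{\HH}\otimes p_{\HH}$. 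Point (2) is then immediate: $p_{\HH}$ is the sum of the $p_{\alpha}$ with $\alpha\in\Ir(\HH)$, so $p_{\HH}\ell^{\infty}(\h{\G}) = \prod_{\alpha\in\Ir(\HH)}\B(H_{\alpha})$, which is exactly $\ell^{\infty}(\h{\HH})$ once one notes that for $\alpha\in\Ir(\HH)$ the space $H_{\alpha}$ and the block $\B(H_{\alpha})$ are literally the same in $\h{\HH}$ and $\h{\G}$.

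For (3), I would use the description of $\h{\D}$ through isometric intertwiners: on the block $\B(H_{\alpha})\overline{\otimes}\B(H_{\beta})$ the element $\h{\D}((x_{\gamma})_{\gamma})$ equals $\sum_{\gamma}\sum_{t}t\,x_{\gamma}\,t^{*}$, where $t$ runs over an orthonormal basis of $\Mor(\gamma,\alpha\otimes\beta)$. For $\alpha,\beta\in\Ir(\HH)$ the spaces $\Mor(\gamma,\alpha\otimes\beta)$ computed in $\HH$ and in $\G$ coincide and only $\gamma\in\Ir(\HH)$ occur, so the corresponding blocks of $\h{\D}_{\HH}(a)$ and of $\h{\D}(a)$ agree; when $\alpha$ or $\beta$ lies outside $\Ir(\HH)$ both sides are annihilated by $p_{\HH}\otimes p_{\HH}$. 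Since $p_{\HH}\otimes p_{\HH}$ is central this gives exactly $\h{\D}_{\HH}(a) = \h{\D}(a)(p_{\HH}\otimes p_{\HH})$.

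Finally, (4) should follow by a direct computation from (1), (3) and left invariance of $h_{L}$. The key point is that for $x\in\ell^{\infty}(\h{\HH})$ one has $x = xp_{\HH}$, so $\h{\D}(x) = \h{\D}(x)\h{\D}(p_{\HH})$ and (1) gives $\h{\D}(x)(p_{\HH}\otimes 1) = \h{\D}(x)(p_{\HH}\otimes p_{\HH})$. Writing $h_{L,\HH}(\,\cdot\,) = h_{L}(\,\cdot\, p_{\HH})$ and using centrality of $p_{\HH}$, one computes $(\i\otimes h_{L,\HH})(\h{\D}_{\HH}(x)) = (\i\otimes h_{L})(\h{\D}(x)(p_{\HH}\otimes p_{\HH})) = (\i\otimes h_{L})(\h{\D}(x)(p_{\HH}\otimes 1)) = p_{\HH}(\i\otimes h_{L})(\h{\D}(x)) = h_{L}(x)p_{\HH} = h_{L,\HH}(x)p_{\HH}$, which is left invariance for $\h{\HH}$ since $p_{\HH}$ is its unit; normality, faithfulness and semifiniteness of $h_{L,\HH}$ on the corner $p_{\HH}\ell^{\infty}(\h{\G})$ are routine. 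The \emph{main obstacle} I anticipate is the Frobenius-reciprocity closure argument underlying (1), namely justifying cleanly that $\Ir(\HH)$ is stable under conjugation and under subrepresentations of tensor products and that these operations are computed identically in $\HH$ and in $\G$; once this is in place, together with the standard intertwiner formula for $\h{\D}$ used in (3), the remaining statements are bookkeeping on blocks.
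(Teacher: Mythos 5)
Your proof is correct, but note that the paper itself contains no proof of this proposition: it is quoted from the literature with a pointer to \cite[Prop 2.3]{fima2008kazhdan}, so there is nothing internal to compare against. Your route --- the block picture $\ell^{\infty}(\h{\G})=\prod_{\alpha}\B(H_{\alpha})$, the support fact $(p_{\alpha}\otimes p_{\beta})\h{\D}(p_{\gamma})\neq 0 \Leftrightarrow \gamma\subset\alpha\otimes\beta$, and Frobenius reciprocity --- is the standard one, and it is exactly the toolkit this paper uses elsewhere (both ingredients reappear verbatim in the proof of Lemma \ref{lem:invariantwa}). The ``main obstacle'' you flag is in fact harmless: for two representations $u,v$ of $\HH$, the intertwiner equation is the same equation whether it is read in $\B(H_{u},H_{v})\otimes C(\HH)$ or in $\B(H_{u},H_{v})\otimes C(\G)$, so $\Mor_{\HH}(u,v)=\Mor_{\G}(u,v)$ on the nose; consequently irreducibility, conjugates and decompositions of tensor products are computed identically in $\HH$ and in $\G$, which is precisely the stability of $\Ir(\HH)$ your argument needs. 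The remaining loose end in (4) --- normality, faithfulness and semifiniteness of $h_{L,\HH}$ --- is indeed routine, since $h_{L,\HH}$ is just the restriction of the explicit n.s.f.\ weight (\ref{eq:haarweight1}) to the central corner $p_{\HH}\ell^{\infty}(\h{\G})$, and such a restriction inherits all three properties.
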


It is easy to see, using the above statements, that the map $a\mapsto \h{\D}(a)(1\otimes p_{\HH})$ defines a right action (right actions of discrete quantum groups are defined in the same way as left actions with the obvious modifications) of $\h{\HH}$ on $\ell^{\infty}(\h{\G})$. Let $\ell^{\infty}(\h{\G}/\h{\HH})$ be the fixed point subalgebra for this action. Using Proposition \ref{prop:subgroup} again, we see that the restriction of the coproduct $\h{\D}$ to $\ell^{\infty}(\h{\G}/\h{\HH})$ yields a left action of $\h{\G}$ on this von Neumann algebra which will be denoted by $\tau$. Let $h_{L}$ be the left-invariant weight of $\h{\G}$ defined by Equation (\ref{eq:haarweight1}). It is known from \cite[Prop 2.4]{fima2008kazhdan} that the map
\begin{equation*}
T : x \mapsto (\i\otimes h_{L, \HH})[\h{\D}(x)(1\otimes p_{\HH})]
\end{equation*}
is a normal faithful operator-valued weight from $\ell^{\infty}(\h{\G})$ to $\ell^{\infty}(\h{\G}/\h{\HH})$ and that there exists a n.s.f. weight $\theta$  on $\ell^{\infty}(\h{\G}/\h{\HH})$ such that $h_{L} = \theta\circ T$. Let $U$ be the unitary implementation of the action $\tau$ with respect to the weight $\theta$. Then, $\RR= U^{*}$ will be called the \emph{quasi-regular representation} of $\h{\G}$ modulo $\h{\HH}$.

\begin{rem}\label{rem:invariantweight}
By a straighforward calculation, we see that when both sides are well-defined,
\begin{equation*}
\tau\circ T(x) = (\i\otimes \i\otimes h_{L, \HH})[(\h{\D}\otimes \i)(\h{\D}(x)(1\otimes p_{\HH}))] = (\i\otimes T)\circ\tau(x).
\end{equation*}
The weight $\theta$ can therefore be interpreted as an almost invariant measure on the quotient space with respect to the action $\tau$.
\end{rem}

\subsection{Approximation properties}

Two approximation properties will be considered in this paper : weak amenability and the Haagerup property. They have both been defined in earlier works and enjoy various characterizations (see for example \cite{kraus1999approximation}, \cite{freslon2012note} and \cite{daws2014haagerup}). For our purpose, the point of view of \emph{multipliers} is the best suited. We refer the reader to \cite[Ch 12]{brown2008finite} for an introduction to approximation properties for classical groups, which motivates the following definitions.

\begin{de}\label{de:quantummultiplier}
Let $\h{\G}$ be a discrete quantum group and $a\in \ell^{\infty}(\h{\G})$. The \emph{left multiplier} associated to $a$ is the map $m_{a} : \Pol(\G) \rightarrow \Pol(\G)$ defined by
\begin{equation*}
(m_{a}\otimes \i)(u^{\alpha}) = (1\otimes ap_{\alpha})u^{\alpha},
\end{equation*}
for every irreducible representation $\alpha$ of $\G$. A net $(a_{t})$ of elements of $\ell^{\infty}(\h{\G})$ is said to \emph{converge pointwise} to $a\in \ell^{\infty}(\h{\G})$ if for every irreducible representation $\alpha$ of $\G$, $a_{t}p_{\alpha}\rightarrow ap_{\alpha}$ in $\B(H_{\alpha})$. An element $a\in \ell^{\infty}(\h{\G})$ is said to have \emph{finite support} if $ap_{\alpha} = 0$ for all but finitely many $\alpha\in \Ir(\G)$.
\end{de}

\begin{de}\label{de:quantumwa}
A discrete quantum group $\h{\G}$ is said to be \emph{weakly amenable} if there exists a net $(a_{t})$ of elements of $\ell^{\infty}(\h{\G})$ such that
\begin{itemize}
\item $a_{t}$ has finite support for all $t$.
\item $(a_{t})$ converges pointwise to $1$.
\item $K:=\limsup_{t} \|m_{a_{t}}\|_{cb}$ is finite.
\end{itemize}
The lower bound of the constants $K$ for all nets satisfying these properties is denoted by $\Lambda_{cb}(\h{\G})$ and called the \emph{Cowling-Haagerup constant} of $\h{\G}$. By convention, $\Lambda_{cb}(\h{\G})=\infty$ if $\h{\G}$ is not weakly amenable.
\end{de}

\begin{de}\label{de:quantumhaagerup}
A discrete quantum group $\h{\G}$ is said to have the \emph{Haagerup property} if there exists a net $(a_{t})$ of elements of $\ell^{\infty}(\h{\G})$ such that
\begin{itemize}
\item $a_{t}\in C_{0}(\h{\G})$ for all $t$.
\item $(a_{t})$ converges pointwise to $1$.
\item $m_{a_{t}}$ is completely positive for all $t$.
\end{itemize}
\end{de}

As in the classical case, these properties are connected to corresponding approximation properties for the associated operator algebras (see for instance \cite[Ch 12]{brown2008finite} for the definitions). However, the link is not fully understood yet when the quantum group is not \emph{unimodular}. Let us therefore only state results in the unimodular case (i.e. when the Haar state is a trace), which were proved in \cite[Thm 5.14]{kraus1999approximation}.

\begin{thm}[Kraus-Ruan]\label{thm:quantumwa}
Let $\h{\G}$ be a \emph{unimodular} discrete quantum group. Then,
\begin{itemize}
\item $\h{\G}$ has the Haagerup property $\Leftrightarrow$ $C_{\text{red}}(\G)$ has the Haagerup property relative to $h$ $\Leftrightarrow$ $L^{\infty}(\G)$ has the Haagerup property.
\item $\Lambda_{cb}(\h{\G}) = \Lambda_{cb}(C_{\text{red}}(\G)) = \Lambda_{cb}(L^{\infty}(\G))$.
\end{itemize}
\end{thm}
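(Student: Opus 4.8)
The plan is to build a dictionary between elements $a\in\ell^{\infty}(\h\G)$ and maps on the operator algebras, and then translate each of the three defining conditions of Definitions~\ref{de:quantumwa} and~\ref{de:quantumhaagerup} into its operator-algebraic counterpart. First I would observe that the multiplier $m_{a}$ of Definition~\ref{de:quantummultiplier} is nothing but the restriction to $\Pol(\G)$ of a map $\Phi_{a}$ on $C_{\text{red}}(\G)$, and that by density of $\Pol(\G)$ and normality the completely bounded norm of $m_{a}$ computed on $\Pol(\G)$ coincides with $\|\Phi_{a}\|_{cb}$ on $C_{\text{red}}(\G)$ and, when $\Phi_{a}$ is normal, with its cb norm on $L^{\infty}(\G)$. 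Thus the cb constants appearing in the three statements are literally equal once one knows that the relevant maps coincide; the content of the theorem lies entirely in matching the \emph{qualitative} conditions.

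For that matching I would use the Peter--Weyl decomposition of $L^{2}(\G)$. In the unimodular case the Haar state $h$ is a trace and the matrices $Q_{\alpha}$ are trivial, so the (normalized) coefficients $u^{\alpha}_{ij}$ form an orthonormal basis and the block of $L^{2}(\G)$ indexed by $\alpha\in\Ir(\G)$ is isometric to $H_{\alpha}\otimes\overline{H_{\alpha}}$. A direct computation from $(m_{a}\otimes\i)(u^{\alpha})=(1\otimes ap_{\alpha})u^{\alpha}$ shows that on this block $\Phi_{a}$ acts as $(ap_{\alpha})\otimes 1$. Four consequences follow: (i) $\|\Phi_{a}\|_{\B(L^{2}(\G))}=\sup_{\alpha}\|ap_{\alpha}\|=\|a\|_{\infty}$, so $\Phi_{a}$ is always $L^{2}$-bounded; (ii) $a$ has finite support iff $\Phi_{a}$ is finite rank; (iii) $a\in C_{0}(\h\G)$, i.e.\ $\|ap_{\alpha}\|\to 0$, iff $\Phi_{a}$ extends to a \emph{compact} operator on $L^{2}(\G)$; (iv) pointwise convergence $a_{t}\to 1$ is equivalent to $\Phi_{a_{t}}\to\i$ pointwise in $\|\cdot\|_{2}$. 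Complete positivity of $m_{a}$ and of $\Phi_{a}$ agree because they are the same map. This already gives the ``easy'' implications: a net $(a_{t})$ witnessing weak amenability (resp.\ the Haagerup property) of $\h\G$ produces, via $\Phi_{a_{t}}$, finite-rank cb maps with controlled norm (resp.\ $L^{2}$-compact normalized cp maps) converging to the identity, hence the corresponding approximation property for $L^{\infty}(\G)$ and for $C_{\text{red}}(\G)$ relative to $h$.

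The converse direction is where the real work lies, and I expect it to be the main obstacle. Given an arbitrary approximating net $(T_{t})$ on $L^{\infty}(\G)$ --- finite-rank cb for weak amenability, $L^{2}$-compact cp for the Haagerup property --- there is no reason for $T_{t}$ to be a multiplier $\Phi_{a}$. The standard remedy is an averaging procedure over the translation action encoded by $\D$ (equivalently by the multiplicative unitary $W$): one replaces $T_{t}$ by its average, which by invariance of $h$ is a genuine left multiplier $m_{a_{t}}$ satisfying $\|m_{a_{t}}\|_{cb}\le\|T_{t}\|_{cb}$. The delicate points are to check that this average is again of the form $m_{a}$ for some $a\in\ell^{\infty}(\h\G)$, that complete positivity and normalization survive, that finite rank (resp.\ $L^{2}$-compactness, via (ii)--(iii)) is inherited by the averaged map, and that pointwise convergence to the identity is preserved. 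It is precisely here that unimodularity is used, through trace-invariance of $h$; without it the averaging distorts the cb norm and the above inequalities fail, which is why the statement is restricted to the unimodular case.

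Finally, the three-fold equivalence is completed by recalling Jolissaint's result that for a finite von Neumann algebra the Haagerup property relative to the trace passes between $C_{\text{red}}(\G)$ and its weak closure $L^{\infty}(\G)$, so the middle and right conditions coincide. The equalities of Cowling--Haagerup constants then follow by tracking the quantity $\limsup_{t}\|m_{a_{t}}\|_{cb}$ through both directions: the easy implication gives $\Lambda_{cb}(L^{\infty}(\G))\le\Lambda_{cb}(\h\G)$ and the averaging gives the reverse inequality, with the identical argument relating $\h\G$ and $C_{\text{red}}(\G)$, whence $\Lambda_{cb}(\h\G)=\Lambda_{cb}(C_{\text{red}}(\G))=\Lambda_{cb}(L^{\infty}(\G))$.
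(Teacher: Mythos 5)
The paper does not actually prove this theorem: it is quoted from Kraus--Ruan \cite{kraus1999approximation}, so your proposal has to be measured against that standard argument, whose overall architecture (Peter--Weyl dictionary, averaging of approximating maps into multipliers, unimodularity entering through traciality of $h$) you have correctly identified. Your dictionary (i)--(iv) and the ``easy'' direction are fine in the unimodular case. The problem is that the converse direction, which you yourself flag as ``where the real work lies'', is not merely a list of delicate verifications: the averaging map is never constructed. The quantum analogue of $\phi_T(g)=\tau\bigl(T(\lambda_g)\lambda_g^*\bigr)$ and the key inequality $\|m_{a_T}\|_{cb}\leqslant\|T\|_{cb}$ rest on a Fell-absorption type argument involving the multiplicative unitary $W$ and the trace property of $h$; this is the technical heart of the theorem, and asserting that ``by invariance of $h$'' the average is a multiplier with smaller cb norm leaves the entire core of the proof unproved.

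More seriously, one of the steps you list as ``to check'' is false as stated: averaging a finite-rank map does \emph{not} produce a finitely supported element of $\ell^{\infty}(\h{\G})$. Already for a discrete group $\Gamma$, if $T=\sum_{i=1}^{n}\omega_{i}(\cdot)y_{i}$ is a normal finite-rank map on $L(\Gamma)$, then $\phi_{T}(g)=\tau\bigl(T(\lambda_g)\lambda_g^*\bigr)=\sum_{i}\omega_{i}(\lambda_g)\,\tau(y_{i}\lambda_g^*)$ is a sum of products of bounded functions with $\ell^{2}$ functions, hence lies in $\ell^{2}(\Gamma)$ (so in $c_{0}(\Gamma)$) but generically has infinite support; finite support would require the $y_{i}$ to be trigonometric polynomials, which can be arranged by a norm perturbation inside $C^{*}_{\lambda}(\Gamma)$ (norm density of $\mathbb{C}[\Gamma]$) but \emph{not} inside $L(\Gamma)$. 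Since Definition \ref{de:quantumwa} demands finitely supported $a_{t}$, your plan as written only yields ``weak amenability with $C_{0}$ multipliers''. The repair is a truncation argument: because $\ell^{2}(\Gamma)$ embeds contractively into the Fourier algebra $A(\Gamma)$, which embeds contractively into the completely bounded multipliers, cutting the tail of $\phi_{T}$ changes its cb norm by at most the $\ell^{2}$-norm of the tail, so one recovers finitely supported multipliers \emph{with the same constant} (a naive fix, such as replacing $\phi_{T}$ by $\phi_{T}\overline{\phi_{T}}\in\ell^{1}$ and truncating, would square the Cowling--Haagerup constant). The unimodular quantum analogue of these embeddings is precisely part of what Kraus--Ruan develop, and without it the equality $\Lambda_{cb}(\h{\G})=\Lambda_{cb}(L^{\infty}(\G))$ does not follow. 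Note that this issue does not arise for the Haagerup property, where Definition \ref{de:quantumhaagerup} only asks for $a_{t}\in C_{0}(\h{\G})$, which the averaging of an $L^{2}$-compact map does deliver (diagonal coefficients of a compact operator along an orthonormal family tend to zero).
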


\section{Permanence results}\label{sec:permanence}

This section is divided into two parts. In the first one, we will prove permanence of approximation properties under several elementary constructions. In the second part, we will extend the result of \cite{freslon2012note} to free products amalgamated over a finite quantum subgroup. The result also holds for the Haagerup property, thus improving \cite[Thm 7.8]{daws2014haagerup} by removing the unimodularity assumption.

\subsection{First results}

We start with the simplest case, namely passing to quantum subgroups. The permanence of the Haagerup property by passing to discrete quantum subgroups (or more generally to closed quantum subgroups of a locally compact quantum group) was proved in \cite[Prop 5.7]{daws2014haagerup}. We therefore only consider weak amenability.

\begin{prop}\label{cor:wasubgroup}
Let $\h{\G}$ be a weakly amenable discrete quantum group and let $\h{\HH}$ be a discrete quantum subgroup of $\h{\G}$. Then, $\h{\HH}$ is weakly amenable and $\Lambda_{cb}(\h{\HH}) \leqslant \Lambda_{cb}(\h{\G})$.
\end{prop}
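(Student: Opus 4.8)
The plan is to start with a weakly amenable net $(a_t)$ of finitely supported elements in $\ell^\infty(\h{\G})$ converging pointwise to $1$ with $\limsup_t \|m_{a_t}\|_{cb} = \Lambda_{cb}(\h{\G})$, and to produce from it a suitable net witnessing weak amenability of $\h{\HH}$. The natural candidate is to restrict each $a_t$ using the central projection $p_{\HH} = \sum_{\alpha \in \Ir(\HH)} p_\alpha$ from Proposition \ref{prop:subgroup}, setting $b_t := a_t p_{\HH} \in p_{\HH}\ell^\infty(\h{\G}) = \ell^\infty(\h{\HH})$. Since the irreducible representations of $\HH$ form a subset of those of $\G$ (via $\Ir(\HH) \subset \Ir(\G)$) and $p_\alpha$ is unchanged for $\alpha \in \Ir(\HH)$, these $b_t$ automatically have finite support in $\ell^\infty(\h{\HH})$ and converge pointwise to $p_{\HH}$, which is the identity of $\ell^\infty(\h{\HH})$. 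So the first two conditions of Definition \ref{de:quantumwa} are immediate.

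The crux is the completely bounded norm estimate: I must show $\limsup_t \|m_{b_t}\|_{cb} \leqslant \Lambda_{cb}(\h{\G})$, where $m_{b_t}$ is the multiplier on $\Pol(\HH)$ associated to $b_t \in \ell^\infty(\h{\HH})$. The key point is that the multiplier $m_{b_t}$ on $\Pol(\HH)$ should be identified with the restriction of $m_{a_t}$ on $\Pol(\G)$ to the subalgebra $\Pol(\HH) \subset \Pol(\G)$. Indeed, for $\alpha \in \Ir(\HH)$ the representation $u^\alpha$ of $\HH$ coincides with the corresponding representation of $\G$, and $(m_{a_t} \otimes \i)(u^\alpha) = (1 \otimes a_t p_\alpha)u^\alpha = (1 \otimes b_t p_\alpha)u^\alpha = (m_{b_t} \otimes \i)(u^\alpha)$, so $m_{b_t} = (m_{a_t})|_{\Pol(\HH)}$. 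Since a completely bounded map does not increase its cb-norm upon restriction to a subalgebra, I get $\|m_{b_t}\|_{cb} \leqslant \|m_{a_t}\|_{cb}$, and taking $\limsup$ gives the desired bound $\Lambda_{cb}(\h{\HH}) \leqslant \Lambda_{cb}(\h{\G})$.

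The main obstacle, and the step needing care, is making the restriction argument rigorous at the level of cb-norms. The multiplier $m_{a_t}$ is defined on $\Pol(\G)$, but its cb-norm is measured through its extension to (a completion of) $C_{\text{red}}(\G)$ or $L^\infty(\G)$; the inequality $\|m_{b_t}\|_{cb} \leqslant \|m_{a_t}\|_{cb}$ therefore requires that $C_{\text{red}}(\HH)$ sit inside $C_{\text{red}}(\G)$ compatibly with the multiplier maps and their cb-extensions. Here I would invoke Proposition \ref{prop:conditionalexpectationsubgroup}: the faithful conditional expectation $\E_{\HH} : C_{\text{red}}(\G) \to C_{\text{red}}(\HH)$ gives a genuine inclusion $C_{\text{red}}(\HH) \hookrightarrow C_{\text{red}}(\G)$ as a sub-C*-algebra, which guarantees that restricting the extended completely bounded map to this subalgebra is legitimate and does not increase the cb-norm. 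I would verify that the cb-extension of $m_{b_t}$ agrees with the restriction of the cb-extension of $m_{a_t}$ — this amounts to checking the two maps coincide on the dense $*$-subalgebra $\Pol(\HH)$, which follows from the coefficient computation above. Everything else (finite support, pointwise convergence) is formal given Proposition \ref{prop:subgroup}, so the proof reduces almost entirely to this compatibility of multipliers under the subgroup inclusion.
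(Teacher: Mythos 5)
Your proof is correct and takes essentially the same route as the paper: both truncate the net to $b_{t} = a_{t}p_{\HH}$, note finite support and pointwise convergence via Proposition \ref{prop:subgroup}, and control the cb-norm through the inclusion $C_{\text{red}}(\HH)\subset C_{\text{red}}(\G)$ coming from Proposition \ref{prop:conditionalexpectationsubgroup}. The only cosmetic difference is that the paper writes the estimate as $\|m_{a_{t}p_{\HH}}\|_{cb} = \|\E_{\HH}\circ m_{a_{t}}\|_{cb}\leqslant \|m_{a_{t}}\|_{cb}$, composing with the completely contractive conditional expectation instead of restricting to the subalgebra, but the two maps agree on $\Pol(\HH)$, so the arguments are interchangeable.
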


\begin{proof}
Let $\epsilon > 0$ and let $(a_{t})$ be a net of finitely supported elements in $\ell^{\infty}(\h{\G})$ converging pointwise to $1$ and such that $\limsup \|m_{a_{t}}\|_{cb}\leqslant \Lambda_{cb}(\h{\G}) + \epsilon$. Then, using the notations of Proposition \ref{prop:subgroup}, $(a_{t}p_{\HH})$ is a net of finitely supported elements in $\ell^{\infty}(\h{\HH})$ which converges pointwise to $1$. Using the conditional expectation of Proposition \ref{prop:conditionalexpectationsubgroup}, we see that $\|m_{a_{t}p_{\HH}}\|_{cb} = \|\E_{\HH}\circ m_{a_{t}}\|_{cb} \leqslant \|m_{a_{t}}\|_{cb}$. Thus, $\Lambda_{cb}(\h{\HH}) \leqslant \Lambda_{cb}(\h{\G}) + \epsilon$.
\end{proof}

It is proved in \cite{wang1995tensor} that if $\h{\G}$ and $\h{\HH}$ are two discrete quantum groups, then the minimal tensor product $C(\G)\otimes C(\HH)$ can be turned into a compact quantum group in a natural way. Its dual discrete quantum group is denoted by $\h{\G}\times \h{\HH}$ and called the \emph{direct product} of $\h{\G}$ and $\h{\HH}$.

\begin{prop}\label{cor:directproduct}
Let $\h{\G}$ and $\h{\HH}$ be two discrete quantum groups. Then, $\h{\G}\times \h{\HH}$ is weakly amenable if and only if both $\h{\G}$ and $\h{\HH}$ are weakly amenable. Moreover,
\begin{equation*}
\max(\Lambda_{cb}(\h{\G}), \Lambda_{cb}(\h{\HH}))\leqslant\Lambda_{cb}(\h{\G}\times \h{\HH}) \leqslant \Lambda_{cb}(\h{\G})\Lambda_{cb}(\h{\HH}).
\end{equation*}
\end{prop}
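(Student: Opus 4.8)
The plan is to treat the two inequalities separately, using the subgroup result already established for the lower bound and a tensor-product argument for the upper bound. The starting point is the structural identification: since $C(\G\times\HH) = C(\G)\otimes C(\HH)$, the irreducible representations of $\G\times\HH$ are exactly the exterior tensor products $u^{\alpha}\boxtimes u^{\beta}$ with $\alpha\in\Ir(\G)$ and $\beta\in\Ir(\HH)$, so that $\Ir(\G\times\HH) = \Ir(\G)\times\Ir(\HH)$, the minimal central projections are $p_{(\alpha,\beta)} = p_{\alpha}\otimes p_{\beta}$, and $\ell^{\infty}(\h{\G}\times\h{\HH}) = \ell^{\infty}(\h{\G})\overline{\otimes}\ell^{\infty}(\h{\HH})$.

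For the lower bound, I would observe that the embedding $a\mapsto a\otimes 1$ of $C(\G)$ into $C(\G)\otimes C(\HH)$ exhibits $\h{\G}$, and symmetrically $\h{\HH}$, as a discrete quantum subgroup of $\h{\G}\times\h{\HH}$: one checks that the coproduct of $\G\times\HH$ restricts to that of $\G$ on $C(\G)\otimes 1$. Proposition \ref{cor:wasubgroup} then immediately gives that weak amenability of $\h{\G}\times\h{\HH}$ forces weak amenability of each factor, together with $\Lambda_{cb}(\h{\G})\leqslant\Lambda_{cb}(\h{\G}\times\h{\HH})$ and $\Lambda_{cb}(\h{\HH})\leqslant\Lambda_{cb}(\h{\G}\times\h{\HH})$, hence the inequality $\max(\Lambda_{cb}(\h{\G}),\Lambda_{cb}(\h{\HH}))\leqslant\Lambda_{cb}(\h{\G}\times\h{\HH})$.

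For the upper bound and the converse implication, suppose both factors are weakly amenable. Fix $\epsilon>0$ and choose finitely supported nets $(a_{t})$ in $\ell^{\infty}(\h{\G})$ and $(b_{s})$ in $\ell^{\infty}(\h{\HH})$ converging pointwise to $1$ with $\limsup_{t}\|m_{a_{t}}\|_{cb}\leqslant\Lambda_{cb}(\h{\G})+\epsilon$ and $\limsup_{s}\|m_{b_{s}}\|_{cb}\leqslant\Lambda_{cb}(\h{\HH})+\epsilon$. I would then consider the net $(a_{t}\otimes b_{s})$ in $\ell^{\infty}(\h{\G})\overline{\otimes}\ell^{\infty}(\h{\HH})$ indexed by the product directed set; it is finitely supported and, since $(a_{t}\otimes b_{s})p_{(\alpha,\beta)} = a_{t}p_{\alpha}\otimes b_{s}p_{\beta}$, it converges pointwise to $1$. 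The key computation is that, under the identification $\Pol(\G\times\HH) = \Pol(\G)\otimes\Pol(\HH)$, the associated multiplier factors as $m_{a_{t}\otimes b_{s}} = m_{a_{t}}\otimes m_{b_{s}}$. This is checked directly from Definition \ref{de:quantummultiplier}: writing $u^{(\alpha,\beta)} = u^{\alpha}_{13}u^{\beta}_{24}$ in the leg-numbering where legs $1,2$ carry $C(\G),C(\HH)$ and legs $3,4$ carry $\B(H_{\alpha}),\B(H_{\beta})$, both sides send it to $(1\otimes a_{t}p_{\alpha}\otimes b_{s}p_{\beta})u^{\alpha}_{13}u^{\beta}_{24}$.

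It then remains to invoke the multiplicativity of the completely bounded norm under tensor products, namely $\|m_{a_{t}}\otimes m_{b_{s}}\|_{cb} = \|m_{a_{t}}\|_{cb}\|m_{b_{s}}\|_{cb}$, so that $\limsup_{(t,s)}\|m_{a_{t}\otimes b_{s}}\|_{cb}\leqslant(\Lambda_{cb}(\h{\G})+\epsilon)(\Lambda_{cb}(\h{\HH})+\epsilon)$; letting $\epsilon\to 0$ yields both weak amenability of $\h{\G}\times\h{\HH}$ and the bound $\Lambda_{cb}(\h{\G}\times\h{\HH})\leqslant\Lambda_{cb}(\h{\G})\Lambda_{cb}(\h{\HH})$. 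The main obstacle I anticipate is essentially bookkeeping: verifying the factorization $m_{a\otimes b} = m_{a}\otimes m_{b}$ rigorously through the leg-numbering, and ensuring the cb-norm multiplicativity is applied at the correct operator-space level, that is on the relevant subspaces of $C_{\text{red}}(\G)\otimes C_{\text{red}}(\HH)$ carrying the extensions of the multipliers. The multiplicativity itself is a standard fact about completely bounded maps, so no genuine analytic difficulty is expected there.
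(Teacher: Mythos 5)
Your proposal is correct and follows essentially the same route as the paper: the lower bound via Proposition \ref{cor:wasubgroup} applied to the two factor subgroups, and the upper bound by tensoring the two nets of finitely supported elements, using $m_{a_{t}\otimes b_{s}} = m_{a_{t}}\otimes m_{b_{s}}$ and submultiplicativity of the cb-norm under tensor products. The only difference is that you verify the identification $\Ir(\G\times\HH)=\Ir(\G)\times\Ir(\HH)$ and the factorization of multipliers by hand, whereas the paper cites Wang's description of the representation theory of direct products; both are fine.
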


\begin{proof}
The "only if" part is a direct consequence of Proposition \ref{cor:wasubgroup}, as well as the first inequality. To prove the second inequality, let $\epsilon > 0$ and let $(a_{t})$ and $(b_{s})$ be nets of finitely supported elements respectively in $\ell^{\infty}(\h{\G})$ and in $\ell^{\infty}(\h{\HH})$ converging pointwise to $1$ and such that $\limsup \|m_{a_{t}}\|_{cb} \leqslant \Lambda_{cb}(\h{\G}) +\epsilon$ and $\limsup \|m_{b_{s}}\|_{cb} \leqslant \Lambda_{cb}(\h{\HH}) +\epsilon$. Set
\begin{equation*}
c_{(t, s)} = a_{t}\otimes b_{s} \in \ell^{\infty}(\h{\G}\times \h{\HH}).
\end{equation*}
From the description of the representation theory of direct products given in \cite[Thm 2.11]{wang1995tensor}, we see that $(c_{(t, s)})$ is a net of finitely supported elements converging pointwise to $1$. Moreover, since $m_{c_{(t, s)}} = m_{a_{t}}\otimes m_{b_{s}}$, we have $\Lambda_{cb}(\h{\G}\times \h{\HH})\leqslant (\Lambda_{cb}(\h{\G}) +\epsilon)(\Lambda_{cb}(\h{\HH}) +\epsilon)$, which concludes the proof.
\end{proof}

\begin{rem}
It is a general fact that for any two C*-algebras $A$ and $B$, $\Lambda_{cb}(A\otimes B) = \Lambda_{cb}(A)\Lambda_{cb}(B)$ (see e.g. \cite[Thm 12.3.13]{brown2008finite}). Hence, we always have
\begin{equation*}
\Lambda_{cb}(C_{\text{red}}(\G\otimes \HH)) = \Lambda_{cb}(C_{\text{red}}(\G))\Lambda_{cb}(C_{\text{red}}(\HH)).
\end{equation*}
Moreover, Theorem \ref{thm:quantumwa} implies that $\Lambda_{cb}(\h{\G}\times \h{\HH}) = \Lambda_{cb}(\h{\G})\Lambda_{cb}(\h{\HH})$ as soon as the discrete quantum groups are unimodular. It is very likely that this equality holds in general but we were not able to prove it.
\end{rem}

A similar statement holds for the Haagerup property.

\begin{prop}
Let $\h{\G}$ and $\h{\HH}$ be two discrete quantum groups. Then, $\h{\G}\times \h{\HH}$ has the Haagerup property if and only if both $\h{\G}$ and $\h{\HH}$ have the Haagerup property.
\end{prop}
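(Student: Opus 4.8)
The plan is to mirror the structure of the proof of Proposition~\ref{cor:directproduct}, since the Haagerup property is defined by the same kind of approximating net, only with the two boundedness/finiteness conditions replaced by membership in $C_{0}$ and complete positivity of the associated multipliers. For the ``only if'' direction I would invoke the permanence of the Haagerup property under passing to discrete quantum subgroups, proved in \cite[Prop 5.7]{daws2014haagerup}: both $\h{\G}$ and $\h{\HH}$ embed as discrete quantum subgroups of $\h{\G}\times\h{\HH}$ (the embeddings coming from the two tensor-factor inclusions $C(\G)\subset C(\G)\otimes C(\HH)$ and $C(\HH)\subset C(\G)\otimes C(\HH)$), so each inherits the Haagerup property from the product.

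For the ``if'' direction, suppose $(a_{t})$ in $C_{0}(\h{\G})$ and $(b_{s})$ in $C_{0}(\h{\HH})$ are nets witnessing the Haagerup property, that is, each converges pointwise to $1$ and each of $m_{a_{t}}$, $m_{b_{s}}$ is completely positive. First I would set $c_{(t,s)}=a_{t}\otimes b_{s}\in\ell^{\infty}(\h{\G}\times\h{\HH})$, exactly as before. Using the description of the representation theory of the direct product in \cite[Thm 2.11]{wang1995tensor}, every irreducible representation of $\G\otimes\HH$ is of the form $\alpha\boxtimes\beta$ with $\alpha\in\Ir(\G)$ and $\beta\in\Ir(\HH)$, so the multiplier factorizes as $m_{c_{(t,s)}}=m_{a_{t}}\otimes m_{b_{s}}$ and the product net still converges pointwise to $1$. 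The two points that need checking, and which replace the norm estimate of the weak amenability case, are that $c_{(t,s)}\in C_{0}(\h{\G}\times\h{\HH})$ and that $m_{c_{(t,s)}}$ is completely positive.

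The membership in $C_{0}$ is the easier of the two: since $C_{0}(\h{\G})=\bigoplus_{\alpha}\B(H_{\alpha})$ consists of the elements vanishing at infinity over $\Ir(\G)$, and likewise for $\h{\HH}$, an elementary tensor of two such elements vanishes at infinity over $\Ir(\G)\times\Ir(\HH)$, which is precisely $\Ir(\G\otimes\HH)$; I would spell this out by noting that $a_{t}p_{\alpha}$ and $b_{s}p_{\beta}$ are eventually small in norm, hence so is $(a_{t}\otimes b_{s})p_{\alpha\boxtimes\beta}=(a_{t}p_{\alpha})\otimes(b_{s}p_{\beta})$. For complete positivity I would use that $m_{c_{(t,s)}}=m_{a_{t}}\otimes m_{b_{s}}$ is the tensor product (as a map on $\Pol(\G)\otimes\Pol(\HH)\subset\Pol(\G\otimes\HH)$) of two completely positive maps, and that the tensor product of completely positive maps is completely positive.

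The step I expect to be the main obstacle is justifying that $m_{a_{t}}\otimes m_{b_{s}}$ is genuinely completely positive \emph{as a multiplier} of the direct product, rather than merely being a completely positive map on an algebraic tensor product. Complete positivity of a Schur-type multiplier $m_{a}$ is equivalent to positivity of the element $a$ itself (for instance via its associated completely positive map and the Gelfand--Naimark--Segal picture, or through the characterization that $m_{a}$ is completely positive iff the corresponding multiplier is of positive type), so the clean way to close this is to observe that complete positivity of $m_{a_{t}}$ and $m_{b_{s}}$ forces $a_{t}\geqslant 0$ and $b_{s}\geqslant 0$, whence $c_{(t,s)}=a_{t}\otimes b_{s}\geqslant 0$ and $m_{c_{(t,s)}}$ is completely positive. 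I would therefore lean on this equivalence (available in the references on multipliers cited in the excerpt) rather than on a direct tensor-product-of-maps argument, since it sidesteps any subtlety about extending the map from the algebraic tensor product to $\Pol(\G\otimes\HH)$.
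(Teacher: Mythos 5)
Your ``only if'' direction, the construction $c_{(t,s)}=a_{t}\otimes b_{s}$, the pointwise convergence, and the $C_{0}$ argument all coincide with the paper's proof. The gap is precisely in the step you chose to lean on at the end: the claimed equivalence ``$m_{a}$ is completely positive if and only if $a\geqslant 0$ in $\ell^{\infty}(\h{\G})$'' is false. To see this, let $\G$ be the dual of a classical discrete group $\Gamma$, so that $\Ir(\G)\cong\Gamma$, $\ell^{\infty}(\h{\G})=\ell^{\infty}(\Gamma)$, and $m_{a}$ is the Herz--Schur multiplier $\lambda_{g}\mapsto a(g)\lambda_{g}$ on $C^{*}_{\text{red}}(\Gamma)$. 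Then $m_{a}$ is completely positive if and only if $a$ is a \emph{positive definite} function on $\Gamma$, a condition incomparable with pointwise positivity: a nontrivial character $\chi:\Gamma\rightarrow\mathbb{T}$ is positive definite (indeed $m_{\chi}$ is a $*$-automorphism, hence completely positive) but $\chi$ is not a positive element of $\ell^{\infty}(\Gamma)$, while $a=\delta_{g}+\delta_{g^{-1}}$ with $g\neq e$ is a positive element of $\ell^{\infty}(\Gamma)$ whose multiplier is \emph{not} completely positive, since positive definiteness would force $\vert a(g)\vert\leqslant a(e)=0$. The correct characterization is the one the paper itself uses in Proposition \ref{prop:haagerupnonamalgamates}, namely \cite[Thm 5.9]{daws2012completely}: $m_{a}$ is completely positive if and only if $a=(\w\otimes\i)(W)$ for some \emph{state} $\w$ on $C_{\text{max}}(\G)$ --- positive definiteness of $a$, not positivity.

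The tensor-product-of-maps argument you tried to sidestep is in fact unproblematic, and it is exactly how the paper concludes. A completely positive multiplier $m_{a_{t}}$ extends to a completely positive map on $C_{\text{red}}(\G)$ (by Theorem \ref{thm:quantumgilbert} it is even of the form $x\mapsto\xi^{*}(x\otimes 1)\xi$ on all of $\B(L^{2}(\G))$), and the minimal tensor product of two completely positive maps between C*-algebras is completely positive. Since $C_{\text{red}}(\G\otimes\HH)=C_{\text{red}}(\G)\otimes C_{\text{red}}(\HH)$ (minimal tensor product, by Wang's construction of the direct product), the map $m_{a_{t}}\otimes m_{b_{s}}$ is completely positive on $C_{\text{red}}(\G\otimes\HH)$ and restricts to $m_{c_{(t,s)}}$ on $\Pol(\G\otimes\HH)$, so no extension issue arises. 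Alternatively, staying at the level of positive definite elements: write $a_{t}=(\w_{t}\otimes\i)(W_{\G})$ and $b_{s}=(\mu_{s}\otimes\i)(W_{\HH})$ with $\w_{t},\mu_{s}$ states; then $c_{(t,s)}$ is associated to the state $\w_{t}\otimes\mu_{s}$, hence $m_{c_{(t,s)}}$ is completely positive. With either replacement of your last step, the rest of your argument goes through and agrees with the paper.
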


\begin{proof}
The "only if" part comes from stability under passing to quantum subgroups.To prove the "if" part, one can build, as in the proof of Proposition \ref{cor:directproduct}, multipliers on $\h{\G}\times\h{\HH}$ by tensoring multipliers on the two quantum groups. To finish the proof, note that a tensor product of unital completely positive maps is again unital and completely positive and that the tensor product of two elements in $C_{0}(\h{\G})$ and $C_{0}(\h{\HH})$ respectively lies in $C_{0}(\h{\G}\times \h{\HH})$.
\end{proof}

The third construction we will study is inductive limits of discrete quantum groups, or equivalently inverse limits of compact quantum groups. It was proved in \cite[Prop 3.1]{wang1995free} that given a family of discrete quantum groups $(\h{\G}_{i})_{i}$ with connecting maps $\pi_{ij} : C(\G_{i}) \rightarrow C(\G_{j})$ intertwining the coproducts and satisfying $\pi_{jk}\circ\pi_{ij} = \pi_{ik}$, there is a natural compact quantum group structure on the inductive limit C*-algebra. Its dual discrete quantum group is called the \emph{inductive limit} of the system $(\h{\G}_{i}, \pi_{ij})$. In order to study approximation properties, we first need to understand its representation theory. Since we were not able to find a reference for this, we give a statement even though it is certainly well-known to experts.

\begin{prop}\label{prop:inductivelimit}
Let $(\h{\G}_{i}, \pi_{ij})$ be an inductive system of discrete quantum groups with inductive limit $\h{\G}$ and assume that all the maps $\pi_{ij}$ are injective. Then, there is a one-to-one correspondence between the irreducible representations of $\G$ and the increasing union of the sets of irreducible representations of the $\G_{i}$'s.
\end{prop}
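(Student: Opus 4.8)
The plan is to identify both sides through the Peter--Weyl theory of $\G$, the decisive point being that the increasing union of the polynomial algebras $\Pol(\G_i)$ is \emph{exactly} $\Pol(\G)$. Throughout I use that the $\pi_{ij}$ are injective $*$-homomorphisms intertwining the coproducts, so that I may regard the $C(\G_i)$ as an increasing net of C*-subalgebras of $C(\G)$ with dense union and with $\Delta|_{C(\G_i)} = \Delta_i$; this compatibility of the coproducts is exactly what the inductive limit construction of \cite{wang1995free} provides.

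First I would push representations forward. If $u^\alpha\in C(\G_i)\odot \B(H_\alpha)$ is a finite-dimensional unitary representation of $\G_i$, then applying $\pi_{ij}\otimes\i$, resp.\ the inclusion $C(\G_i)\hookrightarrow C(\G)$ tensored with $\i$, produces a unitary in $C(\G_j)\odot \B(H_\alpha)$, resp.\ in $C(\G)\odot \B(H_\alpha)$, which still satisfies the representation identity because $\pi_{ij}$ is a $*$-homomorphism intertwining the coproducts. This yields maps $\Ir(\G_i)\to\Ir(\G_j)\to\Ir(\G)$ and, at the level of matrix coefficients, inclusions $\Pol(\G_i)\subseteq\Pol(\G_j)\subseteq\Pol(\G)$. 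The essential consequence of \emph{injectivity} is that intertwiner spaces are unchanged: an operator $T\in\B(H_\alpha,H_\beta)$ intertwines the pushed-forward representations at level $j$ (or over $\G$) if and only if it already intertwines them at level $i$, because $\pi_{ij}\otimes\i$ is injective. Hence irreducibility and isomorphism classes are preserved, the above maps are injective and compatible with the system, and they assemble into an injection $\bigcup_i\Ir(\G_i)\hookrightarrow\Ir(\G)$.

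The heart of the argument --- and the step I expect to be the main obstacle --- is surjectivity, equivalently the claim that $\mathcal{A}:=\bigcup_i\Pol(\G_i)$ coincides with $\Pol(\G)$. The inclusion $\mathcal{A}\subseteq\Pol(\G)$ was obtained above, and $\mathcal{A}$ is a dense $*$-subalgebra of $C(\G)$, since its closure contains every $C(\G_i)$; it is moreover a subcoalgebra, as $\Delta(\Pol(\G_i))=\Delta_i(\Pol(\G_i))\subseteq\Pol(\G_i)\odot\Pol(\G_i)$. I would then invoke Woronowicz's Peter--Weyl theory: $\Pol(\G)=\bigoplus_{\beta\in\Ir(\G)}C_\beta$ is a \emph{cosemisimple} coalgebra whose simple subcoalgebras are the coefficient spaces $C_\beta$, so every subcoalgebra, and in particular $\mathcal{A}$, is of the form $\bigoplus_{\beta\in T}C_\beta$ for some subset $T\subseteq\Ir(\G)$. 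Passing through the GNS construction for the Haar state $h$, the subspaces $C_\beta\xi_h$ are pairwise orthogonal with total closed span $L^2(\G)$, while the C*-density of $\mathcal{A}$ forces $\mathcal{A}\xi_h$ to be dense in $L^2(\G)$; this is possible only if $T=\Ir(\G)$, whence $\mathcal{A}=\Pol(\G)$.

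Finally, surjectivity follows quickly. Given $\alpha\in\Ir(\G)$, its finitely many matrix coefficients lie in $\Pol(\G)=\bigcup_i\Pol(\G_i)$, hence, by directedness of the system, all in a single $\Pol(\G_i)$; since $\Delta$ restricts to $\Delta_i$ there, $u^\alpha$ is a unitary representation of $\G_i$, and it is irreducible because $\Mor_{\G_i}(\alpha,\alpha)=\Mor_{\G}(\alpha,\alpha)=\C$ by the intertwiner argument. Thus $\alpha$ lies in the image of $\Ir(\G_i)$, and together with the injection of the second paragraph this gives the desired bijection between $\Ir(\G)$ and $\bigcup_i\Ir(\G_i)$. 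The one genuinely delicate point is the identification $\mathcal{A}=\Pol(\G)$ via cosemisimplicity and the Peter--Weyl decomposition of $L^2(\G)$; the remaining steps are bookkeeping with the compatible coproducts and the injectivity of the connecting maps.
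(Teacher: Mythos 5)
Your proof is correct and follows essentially the same route as the paper's: identify the $C(\G_{i})$ as an increasing net of C*-subalgebras of $C(\G)$ with dense union, show that $\mathcal{A}=\bigcup_{i}\Pol(\G_{i})$ must equal $\Pol(\G)$ by a Schur-orthogonality/Peter--Weyl density argument in $L^{2}(\G)$, and conclude that every irreducible representation of $\G$ already lives over some $\G_{i}$. Your explicit treatment of intertwiner preservation via injectivity and the detour through cosemisimple coalgebra structure are just more detailed renderings of what the paper compresses into the quantum-subgroup identification and the phrase ``because of Schur's orthogonality relations, the density implies $\mathcal{A}=\Pol(\G)$''.
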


\begin{proof}
The maps $\pi_{ij}$ being injective, we can identify each $\h{\G}_{i}$ with a discrete quantum subgroup of the $\h{\G}_{j}$'s for $j\geqslant i$. This gives inclusions of the sets of irreducible representations and we denote by $\mathcal{S}$ the increasing union of these sets. We can also identify each $C(\G_{i})$ with a C*-subalgebra of $C(\G)$ in such a way that
\begin{equation*}
\overline{\bigcup C(\G_{i})} = C(\G).
\end{equation*}
Under this identification, the discrete quantum groups $\h{\G}_{i}$ are quantum subgroups of $\h{\G}$, hence any irreducible representation of some $\G_{i}$ yields an irreducible representation of $\G$ and we have proved that $\mathcal{S}\subset \Ir(\G)$. Moreover, the algebra
\begin{equation*}
\mathcal{A} := \bigcup_{i}\Pol(\G_{i})
\end{equation*}
is a dense Hopf-$*$-subalgebra of $C(\G)$ spanned by coefficients of irreducible representations. Because of Schur's orthogonality relations, the density implies that the coefficients of all irreducible representations of $\G$ are in $\mathcal{A}$, i.e. $\mathcal{A} = \Pol(\G)$. This means that any irreducible representation of $\G$ comes from an element of $\mathcal{S}$ and $\Ir(\G) = \mathcal{S}$.
\end{proof}

We can now prove the permanence of weak amenability under this construction.

\begin{prop}\label{cor:wainductivelimits}
Let $(\h{\G}_{i}, \pi_{ij})$ be an inductive system of discrete quantum groups with inductive limit $\h{\G}$ and limit maps $\pi_{i} : C(\G_{i})\rightarrow C(\G)$. Then, if all the maps $\pi_{i}$ are injective,
\begin{equation*}
\sup_{i}\Lambda_{cb}(\h{\G}_{i}) = \Lambda_{cb}(\h{\G}).
\end{equation*}
In particular, the inductive limit is weakly amenable if and only if the quantum groups are all weakly amenable with uniformly bounded Cowling-Haagerup constant.
\end{prop}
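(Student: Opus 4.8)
The plan is to prove the equality $\sup_{i}\Lambda_{cb}(\h{\G}_{i}) = \Lambda_{cb}(\h{\G})$ by establishing the two inequalities separately. For the inequality $\sup_{i}\Lambda_{cb}(\h{\G}_{i}) \leqslant \Lambda_{cb}(\h{\G})$, I would note that each $\h{\G}_{i}$ is, under the injectivity hypothesis on the limit maps $\pi_{i}$, a discrete quantum subgroup of $\h{\G}$ (this is exactly the identification used in the proof of Proposition \ref{prop:inductivelimit}). Proposition \ref{cor:wasubgroup} then gives $\Lambda_{cb}(\h{\G}_{i}) \leqslant \Lambda_{cb}(\h{\G})$ for every $i$, and taking the supremum over $i$ yields the desired bound. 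This direction is essentially immediate once the subgroup identification is in place.

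The substance lies in the reverse inequality $\Lambda_{cb}(\h{\G}) \leqslant \sup_{i}\Lambda_{cb}(\h{\G}_{i})$. Writing $K := \sup_{i}\Lambda_{cb}(\h{\G}_{i})$, I may assume $K < \infty$, since otherwise there is nothing to prove. The strategy is to manufacture a net of finitely supported multipliers on $\h{\G}$ with $\limsup$ of completely bounded norms at most $K$, by patching together the multipliers coming from the individual $\h{\G}_{i}$. Fix $\epsilon > 0$. For each $i$, choose a net $(a^{(i)}_{t})_{t}$ of finitely supported elements of $\ell^{\infty}(\h{\G}_{i}) = p_{\h{\G}_{i}}\ell^{\infty}(\h{\G})$ converging pointwise to $1$ with $\limsup_{t}\|m_{a^{(i)}_{t}}\|_{cb} \leqslant \Lambda_{cb}(\h{\G}_{i}) + \epsilon \leqslant K + \epsilon$. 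Viewing each $a^{(i)}_{t}$ as an element of $\ell^{\infty}(\h{\G})$ supported on $\Ir(\G_{i}) \subset \Ir(\G)$, the crucial compatibility I would exploit is that the multiplier $m_{a^{(i)}_{t}}$ on $\Pol(\G)$ agrees with the multiplier it defines on the subalgebra $\Pol(\G_{i})$, and that composing with the conditional expectation $\E_{\G_{i}}$ of Proposition \ref{prop:conditionalexpectationsubgroup} does not increase the cb-norm. Since, by Proposition \ref{prop:inductivelimit}, $\Ir(\G)$ is the increasing union of the $\Ir(\G_{i})$, every irreducible representation $\alpha$ of $\G$ lies in some $\Ir(\G_{i})$, and for all $j \geqslant i$ one has $p_{\alpha}\leqslant p_{\h{\G}_{j}}$.

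The key step is then to organize these into a single net indexed by the directed set $I \times \bigcup_{i}\{t\}$ (more precisely, a net indexed by pairs $(i,t)$ with the product order, or equivalently by first choosing $i$ large and then $t$ large), and to verify the three defining conditions of weak amenability. Finite support is clear since each $a^{(i)}_{t}$ is finitely supported. For pointwise convergence to $1$: given $\alpha \in \Ir(\G)$, pick $i_{0}$ with $\alpha \in \Ir(\G_{i_{0}})$; then for $i \geqslant i_{0}$ the element $a^{(i)}_{t}p_{\alpha}$ equals the corresponding component of the $\h{\G}_{i}$-multiplier, which converges to $p_{\alpha}$ as $t\to\infty$ by construction. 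The cb-norm bound follows because $\limsup_{(i,t)}\|m_{a^{(i)}_{t}}\|_{cb}\leqslant K + \epsilon$, using that each $m_{a^{(i)}_{t}}$ factors through the cb-norm-decreasing conditional expectation onto $\Pol(\G_{i})$. Letting $\epsilon\to 0$ gives $\Lambda_{cb}(\h{\G})\leqslant K$.

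The main obstacle I anticipate is the bookkeeping of the net: one must ensure that the indexing by pairs $(i,t)$ genuinely produces a net on $\h{\G}$ for which pointwise convergence and the $\limsup$ of cb-norms hold simultaneously, and in particular that passing from a multiplier on $\Pol(\G_{i})$ to a map on all of $\Pol(\G)$ (by setting it to act as the original $a^{(i)}_{t}$, supported in $\Ir(\G_{i})$) does not inflate the completely bounded norm. This last point is handled cleanly by the factorization $m_{a^{(i)}_{t}} = \E_{\G_{i}}\circ m_{a^{(i)}_{t}}$ as in the proof of Proposition \ref{cor:wasubgroup}, which shows the cb-norm computed on $\Pol(\G)$ coincides with the one computed on $\Pol(\G_{i})$. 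Once this compatibility is isolated, the remaining verifications are routine, and the final assertion about weak amenability with uniformly bounded constant is a direct reformulation of the equality $\sup_{i}\Lambda_{cb}(\h{\G}_{i}) = \Lambda_{cb}(\h{\G})$.
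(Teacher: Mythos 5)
Your proposal takes essentially the same route as the paper's proof: the first inequality via the subgroup identification and Proposition \ref{cor:wasubgroup}, and the converse by extending the finitely supported elements by zero using Proposition \ref{prop:inductivelimit}, with the extended multiplier controlled through the conditional expectation onto $C_{\text{red}}(\G_{i})$. One small correction: the factorization that gives the cb-norm bound is the precomposition $m_{a^{(i)}_{t}}\circ\E_{\G_{i}}$ (as in the paper), not $\E_{\G_{i}}\circ m_{a^{(i)}_{t}}$ as displayed in your last paragraph, although your surrounding prose ("factors through the cb-norm-decreasing conditional expectation onto $\Pol(\G_{i})$") shows you intend exactly this.
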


\begin{proof}
The injectivity of the limit maps ensures that each $\h{\G}_{i}$ can be seen as a discrete quantum subgroup of $\G$. Hence, Corollary \ref{cor:wasubgroup} gives the inequality
\begin{equation*}
\sup_{i}\Lambda_{cb}(\h{\G}_{i}) \leqslant \Lambda_{cb}(\h{\G}).
\end{equation*}
To prove the converse inequality, fix an $\epsilon > 0$ and let $(a^{i}_{t})_{t}$ be a net of finitely supported elements in $\ell^{\infty}(\h{\G}_{i})$ converging pointwise to $1$ and such that $\limsup \|m_{a^{i}_{t}}\|_{cb}\leqslant \Lambda_{cb}(\h{\G}_{i}) + \epsilon$. Using the description of the representation theory of inductive limits given by Proposition \ref{prop:inductivelimit}, we can see $(a^{i}_{t})_{(i, t)}$ as a net of finitely supported elements of $\ell^{\infty}(\h{\G})$ converging pointwise to $1$ by setting $a^{i}_{t}p_{\alpha} = 0$ for any $\alpha\notin \Ir(\G_{i})$. The associated multiplier is $m_{a^{i}_{t}}\circ \E_{\G_{i}}$, so that it has the same completely bounded norm as $m_{a_{t}^{i}}$. The conditions on the completely bounded norms then gives $\Lambda_{cb}(\h{\G}) \leqslant \sup_{i}\Lambda_{cb}(\h{\G}_{i}) + \epsilon$.
\end{proof}

\begin{prop}
Let $(\h{\G}_{i}, \pi_{ij})$ be an inductive system of discrete quantum groups with inductive limit $\h{\G}$ and limit maps $\pi_{i} : C(\G_{i})\rightarrow C(\G)$. Then, if all the maps $\pi_{i}$ are injective, $\h{\G}$ has the Haagerup property if and only if all the quantum groups $\G_{i}$ have the Haagerup property.
\end{prop}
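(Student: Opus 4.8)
The plan is to mirror the proof of Proposition \ref{cor:wainductivelimits} (the weak amenability case), exploiting the same representation-theoretic description provided by Proposition \ref{prop:inductivelimit}. The Haagerup property being stable under passing to discrete quantum subgroups (as recalled in the text, from \cite[Prop 5.7]{daws2014haagerup}), the ``only if'' direction is immediate: each $\h{\G}_{i}$ is a discrete quantum subgroup of $\h{\G}$ by the injectivity of the limit maps, so if $\h{\G}$ has the Haagerup property then so does each $\h{\G}_{i}$. The substance is the ``if'' direction, and the natural strategy is to assemble a single approximating net on $\h{\G}$ out of the approximating nets furnished by the individual $\h{\G}_{i}$.

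First I would, for each index $i$, fix a net $(a^{i}_{t})_{t}$ in $C_{0}(\h{\G}_{i})$ converging pointwise to $1$ with each $m_{a^{i}_{t}}$ completely positive, as guaranteed by Definition \ref{de:quantumhaagerup}. The key step is to transport each such element to $\ell^{\infty}(\h{\G})$ by setting its value to $0$ on all $p_{\alpha}$ with $\alpha\notin\Ir(\G_{i})$, exactly as in Proposition \ref{cor:wainductivelimits}. Using the identification $\Ir(\G)=\bigcup_{i}\Ir(\G_{i})$ from Proposition \ref{prop:inductivelimit}, the resulting doubly-indexed family $(a^{i}_{t})_{(i,t)}$, directed by $i$ and then $t$, converges pointwise to $1$ on $\h{\G}$: any fixed $\alpha\in\Ir(\G)$ lies in some $\Ir(\G_{i})$ and hence in $\Ir(\G_{j})$ for all $j\geqslant i$, so the net eventually agrees with the approximating net of $\h{\G}_{j}$ at $p_{\alpha}$. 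I would also check that the extended elements remain in $C_{0}(\h{\G})$, which is clear since each has finite support over the summands $\B(H_{\alpha})$ inherited from $C_{0}(\h{\G}_{i})$, or at least lies in $c_{0}$ over $\Ir(\G_{i})$ and vanishes elsewhere.

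The remaining point is that the multiplier on $\h{\G}$ associated to the extended element is completely positive. As in the weak amenability argument, the multiplier $m$ associated to the extension of $a^{i}_{t}$ factors through the conditional expectation $\E_{\G_{i}}:L^{\infty}(\G)\rightarrow L^{\infty}(\G_{i})$ of Proposition \ref{prop:conditionalexpectationsubgroup}, namely $m = m_{a^{i}_{t}}\circ\E_{\G_{i}}$. Since a conditional expectation is completely positive (indeed unital and completely positive) and $m_{a^{i}_{t}}$ is completely positive by hypothesis, the composition is completely positive, and it is unital because $\E_{\G_{i}}$ is unital and $m_{a^{i}_{t}}$ fixes the unit. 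This yields an approximating net on $\h{\G}$ satisfying all three conditions of Definition \ref{de:quantumhaagerup}, so $\h{\G}$ has the Haagerup property.

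I expect the only genuinely delicate point to be the bookkeeping in the first two steps: verifying that the composite net is correctly directed and that pointwise convergence holds with the zero-extension convention, together with confirming that the extended element truly sits in $C_{0}(\h{\G})$ rather than merely in $\ell^{\infty}(\h{\G})$. These are the same technical checks already carried out for weak amenability, so they should go through without difficulty; the complete positivity of $m_{a^{i}_{t}}\circ\E_{\G_{i}}$ is then a formality given the preceding propositions.
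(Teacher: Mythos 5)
Your proposal is correct and follows essentially the same route as the paper: the ``only if'' direction from stability under passing to quantum subgroups, and the ``if'' direction by zero-extending the approximating elements via the identification $\Ir(\G)=\bigcup_{i}\Ir(\G_{i})$ and observing that the extended multiplier equals $m_{a^{i}_{t}}\circ\E_{\G_{i}}$, hence remains completely positive. The additional checks you spell out (membership in $C_{0}(\h{\G})$ and pointwise convergence of the re-indexed net) are exactly the bookkeeping the paper delegates to its weak amenability argument, so nothing is missing.
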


\begin{proof}
The "only if" part comes from stability under passing to quantum subgroups. To prove the "if" part, we simply have to prove that complete positivity is preserved when a multiplier is extended to the inductive limit. This comes from the fact that the multiplier is equal to $m_{a_{t}^{i}}\circ\E_{\G_{i}}$.
\end{proof}

\subsection{Amalgamated free products}

Consider two discrete quantum groups $\h{\G}_{1}$ and $\h{\G}_{2}$ together with a common discrete quantum subgroup $\h{\HH}$ and let us consider the C*-algebra $A$ obtained by taking the reduced amalgamated free product $C_{\text{red}}(\G_{1})\ast_{C_{\text{red}}(\HH)} C_{\text{red}}(\G_{2})$ with respect to the conditional expectations given by \ref{prop:conditionalexpectationsubgroup}. The coproducts of $\G_{1}$ and $\G_{2}$ induce a map $\D$ on $A$ which is shown in \cite{wang1995free} to be a coproduct turning $(A, \D)$ into a compact quantum group. In analogy with the classical case, the dual of $A$ will be called the \emph{free product of $\h{\G}_{1}$ and $\h{\G}_{2}$ amalgamated over $\h{\HH}$}.

It is well-known that a free product of amenable groups need not be amenable. However, it was proved in \cite[Thm 7.8]{daws2014haagerup} that the Haagerup property passes to free products of discrete quantum groups and it was proved in \cite{freslon2012note} that a free product of discrete quantum groups with Cowling-Haagerup constant equal to $1$ again has Cowling-Haagerup constant equal to $1$. Our goal in this subsection is to extend those two results by allowing amalgamation over a finite quantum subgroup. Note that such a statement for the Haagerup property was proved in \cite[Prop 7.13]{daws2014haagerup} when the quantum groups are unimodular, using the associated von Neumann algebras. Our proof does not require unimodularity, but the price to pay is dealing all the way long with multipliers. For this, we need the following generalization of Gilbert's criterion proved in \cite[Prop 4.1 and Thm 4.2]{daws2012completely}.

\begin{thm}[Daws]\label{thm:quantumgilbert}
Let $\h{\G}$ be a discrete quantum group and let $a\in \ell^{\infty}(\h{\G})$. Then, $m_{a}$ extends to a completely bounded multiplier on $\B(L^{2}(\G))$ if and only if there exists a Hilbert space $K$ and two maps $\xi, \eta \in \B(L^{2}(\G), L^{2}(\G)\otimes K)$ such that $\|\xi\|\|\eta\| = \|m_{a}\|_{cb}$ and
\begin{equation}\label{eq:quantumgilbert}
(1\otimes \eta)^{*}\h{W}_{12}^{*}(1\otimes \xi)\h{W} = a\otimes 1.
\end{equation}
Moreover, we then have $m_{a}(x) = \eta^{*}(x\otimes 1)\xi$.
\end{thm}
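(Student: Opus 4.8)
The plan is to read this as the quantum analogue of the classical Bo\.zejko--Fendler and Gilbert characterization of completely bounded Herz--Schur multipliers, and to produce the pair $(\xi,\eta)$ via the representation theory of completely bounded maps on $\B(L^{2}(\G))$, using the intertwining relations between $W$ and $\h{W}=\Sigma W^{*}\Sigma$ to encode the multiplier condition. The starting observation, which I would establish first and use in both directions, is that $m_{a}$ is characterized on $\Pol(\G)$ by the single operator identity $(m_{a}\otimes\i)(W)=(1\otimes a)W$; this follows immediately from the defining relation $(m_{a}\otimes\i)(u^{\alpha})=(1\otimes ap_{\alpha})u^{\alpha}$ and the block decomposition of $W$ over $\ell^{\infty}(\h{\G})=\prod_{\alpha}\B(H_{\alpha})$, together with $C_{\text{red}}(\G)=\overline{\mathrm{span}}\,(\i\otimes\omega)(W)$.

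For the easy implication I would begin with an admissible pair $(\xi,\eta)$ and define $\Phi(x)=\eta^{*}(x\otimes 1)\xi$ on $\B(L^{2}(\G))$. Since $\Phi$ factors as the amplification $x\mapsto x\otimes 1$ (a normal $*$-homomorphism, hence completely isometric) followed by compression by the bounded operators $\xi$ and $\eta$, it is automatically completely bounded with $\|\Phi\|_{cb}\le\|\xi\|\|\eta\|$. The content is then to check that $\Phi$ restricts to $m_{a}$, for which I would compute $(\Phi\otimes\i)(W)$ in leg notation, insert the hypothesis (\ref{eq:quantumgilbert}), and reduce the claim to an identity purely between $W$ and $\h{W}$; the pentagon equation and the fact that $W\in L^{\infty}(\G)\overline{\otimes}\ell^{\infty}(\h{\G})$ should collapse this to $(1\otimes a)W$. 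Slicing by $\i\otimes\omega$ then gives $\Phi=m_{a}$ on the dense span $(\i\otimes\omega)(W)$, hence $\|m_{a}\|_{cb}\le\|\xi\|\|\eta\|$.

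For the converse I would invoke Haagerup's structure theorem for \emph{normal} completely bounded maps on $\B(H)$: every such map has the form $x\mapsto\eta^{*}(x\otimes 1_{K})\xi$ for some Hilbert space $K$ and $\xi,\eta\in\B(H,H\otimes K)$ with $\|\xi\|\|\eta\|=\|\Phi\|_{cb}$. The decisive point is that the amplification shape is then automatic, because the identity representation is, up to multiplicity, the only normal representation of $\B(H)$, so no exotic Stinespring dilation can occur and the optimal norm factorization is built in. Granting that the cb multiplier extension $\Phi$ of $m_{a}$ is such a normal map, I would apply the theorem to obtain $\xi,\eta$ with $m_{a}(x)=\eta^{*}(x\otimes 1)\xi$ and $\|\xi\|\|\eta\|=\|m_{a}\|_{cb}$, and finally read off the Gilbert equation: substituting this form into $(\Phi\otimes\i)(W)=(1\otimes a)W$ and conjugating by $\Sigma$ to pass from $W$ to $\h{W}$ should reproduce (\ref{eq:quantumgilbert}) exactly. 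Combining the two implications yields the equality $\|\xi\|\|\eta\|=\|m_{a}\|_{cb}$ rather than only an inequality.

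The main obstacle is the structural input in the converse. One must verify that ``completely bounded multiplier on $\B(L^{2}(\G))$'' genuinely supplies a normal cb map, or at least a cb module map over the commutant, so that Haagerup's theorem (in its normal, respectively module, form) applies and forces an amplification; this is precisely the place where the covariance of a \emph{multiplier}, as opposed to an arbitrary cb map, does the work, and where I would be careful not to assume normality without justification. The second, more routine but genuinely fiddly, difficulty is the bookkeeping that translates the abstract covariance $(\Phi\otimes\i)(W)=(1\otimes a)W$ into the stated equation with $\h{W}_{12}^{*}$ and the two marked legs: getting the leg placements, the adjoints, and the $\Sigma$-flips relating $W$ and $\h{W}$ correctly is where most of the care is needed.
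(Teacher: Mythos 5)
First, a point of comparison: the paper does not prove this statement at all --- it is imported verbatim from M.~Daws (Prop 4.1 and Thm 4.2 of the cited work), so there is no internal proof to measure your attempt against; what follows assesses your sketch on its own terms.

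Your overall architecture is sound, and the bookkeeping you worry about in the last paragraph actually works out cleanly. For a \emph{normal} map of the form $\Phi(x)=\eta^{*}(x\otimes 1_{K})\xi$ one has $(1\otimes \eta)^{*}\h{W}_{12}^{*}(1\otimes \xi)=(\i\otimes \Phi)(\h{W}^{*})$, and since $\h{W}=\Sigma W^{*}\Sigma$, Equation (\ref{eq:quantumgilbert}) is \emph{equivalent} to the covariance identity $(\Phi\otimes\i)(W)=(1\otimes a)W$, i.e.\ to $\Phi$ agreeing with $m_{a}$ on the slices of $W$, which span $\Pol(\G)$; no pentagon identity is even needed, only the flip relation and normality. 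So the easy implication goes through exactly as you describe, and, \emph{granted} a normal extension of $m_{a}$ whose cb norm equals $\|m_{a}\|_{cb}$, the representation theorem for normal cb maps on $\B(H)$ does finish the converse, with the correct norm equality.

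The genuine gap is the input you explicitly ``grant'': that the existence of \emph{some} cb extension of $m_{a}$ to $\B(L^{2}(\G))$ yields a \emph{normal} extension with cb norm exactly $\|m_{a}\|_{cb}$. Neither property comes for free. Arveson--Wittstock gives a norm-preserving extension with no control on normality, and you cannot repair this by passing to the normal part of a given extension: the singular part of a cb map on $\B(H)$ need not vanish on the weak*-dense subalgebra $C_{\text{red}}(\G)$, so the normal part need not extend $m_{a}$ anymore, and its cb norm is not controlled by $\|m_{a}\|_{cb}$. In the classical Bo\.{z}ejko--Fendler/Gilbert argument this is resolved by averaging the extension over the group (or over $\mathbb{Z}$-translates) to force it to be a module map over $L^{\infty}(G)$, after which the bimodule form of Haagerup's theorem produces the vector fields; for a genuine quantum group there is no underlying group to average over, and circumventing this --- via self-duality of Hilbert C*-modules over $\ell^{\infty}(\h{\G})$, in the spirit of the Junge--Neufang--Ruan representation theorem --- is precisely the content of Daws' proof. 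So your sketch correctly reduces the theorem to its hard core, and correctly locates that core, but does not supply the argument that constitutes it.
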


Let us give a proof of the stability of the Haagerup property under free products in the language of multipliers, in order to make the extension to the amalgamated case more clear.

\begin{prop}\label{prop:haagerupnonamalgamates}
Let $(\h{\G}_{i})_{i\in I}$ be a family of discrete quantum groups with the Haagerup property. Then, $\ast_{i}\h{\G}_{i}$ has the Haagerup property.
\end{prop}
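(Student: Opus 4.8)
The plan is to manufacture, out of the given $C_{0}$ completely positive multipliers on each factor, a single completely positive multiplier on $\ast_{i}\h{\G}_{i}$ by ``tensoring along words'', and then to settle the two genuine points: membership of the combined symbol in $C_{0}$ and complete positivity of the associated multiplier. First I would recall the representation theory of the free product from \cite{wang1995free}: the irreducible representations are indexed by alternating words $w=\alpha_{1}\cdots\alpha_{n}$ with $\alpha_{j}\in\Ir(\G_{i_{j}})$ nontrivial and $i_{j}\neq i_{j+1}$, carried by $H_{w}=H_{\alpha_{1}}\otimes\cdots\otimes H_{\alpha_{n}}$. For each $i$ fix a net $(a_{t}^{i})$ as in Definition \ref{de:quantumhaagerup}. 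Since $m_{a_{t}^{i}}$ is completely positive, its value $a_{t}^{i}p_{0}$ on the (one-dimensional) trivial representation is a nonnegative scalar tending to $1$; dividing by it I may assume each $m_{a_{t}^{i}}$ is unital, hence completely contractive, so that $\|a_{t}^{i}p_{\alpha}\|\leqslant\|m_{a_{t}^{i}}\|_{cb}=1$ for all $\alpha$ (this last bound follows from the factorisation in Theorem \ref{thm:quantumgilbert}).

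The $C_{0}$ condition is the first obstacle, since nontrivial representations $\alpha$ with $\|a_{t}^{i}p_{\alpha}\|=1$ would produce infinitely many words of norm $1$. I would remove it up front by a harmless perturbation. As $m_{p_{0}}=h(\cdot)1$ is itself unital and completely positive, for $c\in(0,1)$ the element $a_{t,c}^{i}:=c\,a_{t}^{i}+(1-c)p_{0}$ gives the unital completely positive multiplier $c\,m_{a_{t}^{i}}+(1-c)h(\cdot)1$, still lies in $C_{0}(\h{\G}_{i})$, converges pointwise to $1$ as $(t,c)\to(\infty,1)$, and satisfies the strict bound $\sup_{\alpha\neq 0}\|a_{t,c}^{i}p_{\alpha}\|\leqslant c<1$. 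Replacing the given nets by these, I may assume $\sup_{\alpha\neq 0}\|a_{t}^{i}p_{\alpha}\|\leqslant c<1$ for a fixed $c$.

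I would then define $a_{t}\in\ell^{\infty}(\ast_{i}\h{\G})$ by $a_{t}p_{w}=a_{t}^{i_{1}}p_{\alpha_{1}}\otimes\cdots\otimes a_{t}^{i_{n}}p_{\alpha_{n}}\in\B(H_{w})$ on each word $w=\alpha_{1}\cdots\alpha_{n}$, and $a_{t}p_{0}=1$. Pointwise convergence to $1$ in the sense of Definition \ref{de:quantummultiplier} is immediate, each tensor factor converging to the identity of $H_{\alpha_{j}}$. For $C_{0}$, on a length-$n$ word one has $\|a_{t}p_{w}\|=\prod_{j}\|a_{t}^{i_{j}}p_{\alpha_{j}}\|\leqslant c^{\,n}$, so only boundedly many lengths meet $\{\|a_{t}p_{w}\|\geqslant\epsilon\}$; and within a fixed length the requirement $\|a_{t}^{i_{j}}p_{\alpha_{j}}\|\geqslant\epsilon^{1/n}$ leaves only finitely many admissible $\alpha_{j}$ because $a_{t}^{i_{j}}\in C_{0}(\h{\G}_{i_{j}})$. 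Hence $a_{t}\in C_{0}(\ast_{i}\h{\G})$.

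The main work is complete positivity of $m_{a_{t}}$, and this is where I expect the real difficulty. Each $m_{a_{t}^{i}}$ is a unital completely positive map on $C_{\mathrm{red}}(\G_{i})$ preserving the Haar state (state-preservation is automatic: $h$ annihilates every nontrivial coefficient and $m_{a_{t}^{i}}$ fixes $1$). I would then invoke the existence of the reduced free product of state-preserving unital completely positive maps (Boca's theorem, in the framework also used by Ricard--Qu \cite{ricard2005khintchine}): there is a unital completely positive map $T$ on $\ast_{i}(C_{\mathrm{red}}(\G_{i}),h_{i})$ restricting to each $m_{a_{t}^{i}}$ and preserving the free Haar state. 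The crux is the identification $T=m_{a_{t}}$, which I would verify on coefficients: by freeness, $T$ acts on the coefficients of the word representation $\alpha_{1}\cdots\alpha_{n}$ exactly as the tensor product of the $m_{a_{t}^{i_{j}}}$, namely as multiplication by $a_{t}^{i_{1}}p_{\alpha_{1}}\otimes\cdots\otimes a_{t}^{i_{n}}p_{\alpha_{n}}$, which is the definition of $a_{t}$. Matching Boca's free product map with this word-tensor formula is the delicate computation, and it is precisely the argument that must be reworked in the amalgamated case, where the conditional expectations $\E_{\HH}$ of Proposition \ref{prop:conditionalexpectationsubgroup} and the factorisation of Theorem \ref{thm:quantumgilbert} replace the trivial amalgam. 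Once $T=m_{a_{t}}$ is known to be completely positive, the net $(a_{t})$ meets all three conditions of Definition \ref{de:quantumhaagerup}, establishing the Haagerup property for $\ast_{i}\h{\G}_{i}$.
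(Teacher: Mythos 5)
Your proposal is correct in substance, but it takes a genuinely different route from the paper on the two key points, so a comparison is worthwhile. For complete positivity, both arguments ultimately rest on Boca's free product of state-preserving u.c.p.\ maps; the difference is how that free product map is recognized as a multiplier. You verify the identification directly on coefficients of word representations, which works: coefficients of nontrivial irreducibles are $h$-centered by the orthogonality relations, so the coefficients of $u^{w}$ are exactly the alternating reduced words on which Boca's map acts factor-wise, and this reproduces the word-tensor symbol $a_{t}p_{w}$. The paper instead routes through Daws' correspondence between completely positive multipliers and states on $C_{\text{max}}(\G_{i})$ and forms a free product of those states; your coefficient computation is arguably the more transparent of the two. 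Where you really part ways is the $C_{0}$ condition: the paper deduces it from Boca's theorem that free products of $L^{2}$-compact state-preserving u.c.p.\ maps are $L^{2}$-compact, combined with the identity $(h\otimes \i)((m_{c_{t}}\otimes \i)(u^{\alpha})(u^{\alpha})^{*}) = c_{t}p_{\alpha}$, whereas you perturb each multiplier by mixing with $m_{p_{0}}=h(\cdot)1$ to force $\sup_{\alpha\neq 0}\|a^{i}p_{\alpha}\|\leqslant c<1$ and then read off $C_{0}$ from the elementary decay $\|a_{t}p_{w}\|\leqslant c^{\,n}$ in the word length. Your route is more self-contained (no $L^{2}$-compactness black box); what the paper's route buys is that it needs no perturbation and, more importantly, it survives the averaging over a finite amalgam, which is exactly how Theorem \ref{thm:haagerupamalgamated} is then proved --- your convex combination with $h(\cdot)1$ would destroy the property of being the identity on $C_{\text{red}}(\HH)$, so your trick does not carry over to the amalgamated case as directly. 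Two small repairs to record. First, when $I$ is infinite your finiteness count breaks down: each $a^{i}_{t}\in C_{0}(\h{\G}_{i})$ controls one factor, but infinitely many factors could each contribute a representation with $\|a^{i}_{t}p_{\alpha}\|\geqslant\epsilon$; either reduce to finitely many factors (a free product is the inductive limit of the free products of its finite subfamilies, and the Haagerup property passes to such inductive limits by the paper's result on inductive limits), or set $a^{i}=p_{0}$ outside a finite set of indices in each term of the net. Second, the bound $c<1$ cannot be uniform over the net, since you need $c\to 1$ for pointwise convergence to $1$; it must be read per element of the net, which is all the $C_{0}$ argument requires (also, the correct per-factor threshold in your length-$n$ count is $\epsilon$ rather than $\epsilon^{1/n}$, since the remaining factors are bounded by $1$; either way the count is finite).
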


\begin{proof}
It is clearly enough to prove the result for two quantum groups $\G_{1}$ and $\G_{2}$. First note that according to \cite[Thm 5.9]{daws2012completely}, the complete positivity of a multiplier $m_{a}$ implies that $a\in C_{0}(\h{\G}_{i})$ can in fact be chosen to be of the form $(\w_{a}\otimes \i)(W_{i})$ for some state $\w_{a}$ on the envelopping C*-algebra $C_{\text{max}}(\G_{i})$ of $\Pol(\G_{i})$. This means that if we consider two elements $a\in C_{0}(\h{\G}_{1})$ and $b\in C_{0}(\h{\G}_{2})$, then the free product $m_{a}\ast m_{b}$ of completely positive maps coresponds to the multiplier $m_{c}$ with
\begin{equation*}
c = (\w_{c}\otimes \i)(W),
\end{equation*}
where $\w_{c}$ is the free product of the states $\w_{a}$ and $\w_{b}$. So let us take nets $(\w_{a_{t}}\otimes \i)(W_{1})$ and $(\w_{b_{t}}\otimes \i)(W_{2})$ implementing the Haagerup property for $\h{\G}_{1}$ and $\h{\G}_{2}$ respectively. Using the remark above, we get a net of elements $c_{t} = (\w_{c_{t}}\otimes \i)(W)$ converging pointwise to the identity and yielding completely positive multipliers (because $\w_{c_{t}}$ is again a state). Now, \cite[Thm 3.9]{boca1993method} asserts that the free product map $m_{c_{t}} = m_{a_{t}}\ast m_{b_{t}}$ is $L^{2}$-compact as soon as $m_{a_{t}}$ and $m_{b_{t}}$ are. If $\h{\K} = \h{\G}_{1}\ast\h{\G}_{2}$, we have for any $\alpha\in \Ir(\K)$,
\begin{eqnarray*}
(h\otimes \i)((m_{c_{t}}\otimes \i)(u^{\alpha})(u^{\alpha})^{*}) & = & 
(h\otimes \i)((\w_{c_{t}}\otimes \i\otimes \i)(u^{\alpha}_{13}u^{\alpha}_{23})(u^{\alpha})^{*}) \\
& = & (h\otimes \i)((\w_{c_{t}}\otimes \i\otimes \i)[u^{\alpha}_{13}u^{\alpha}_{23}(u^{\alpha}_{23})^{*}]) \\
& = & (h\otimes \i)((\w_{c_{t}}\otimes \i\otimes \i)(u^{\alpha}_{13})) \\
& = & c_{t}p_{\alpha}.
\end{eqnarray*}
This and the fact that $m_{c_{t}}$ is $L^{2}$-compact prove that $c_{t}\in C_{0}(\h{\G}_{1}\ast\h{\G}_{2})$, concluding the proof.
\end{proof}

The strategy to handle the amalgamated case is to produce multipliers which, while implementing the desired approximation property, are the identity on the amalgam. Here finiteness proves crucial through the next Lemma. If $\h{\HH}$ is a finite quantum group, it is unimodular and we will denote by $\h{h}$ the unique Haar weight on $\ell^{\infty}(\h{\HH})$ which is both left and right invariant, i.e. $\h{h}(a) = \sum_{\alpha}\Tr(ap_{\alpha})$. We first define the averaging process.

\begin{de}
Let $\h{\G}$ be a discrete quantum group, $\h{\HH}$ a \emph{finite} quantum subgroup of $\h{\G}$ and let $a\in \ell^{\infty}(\h{\G})$. The \emph{averaging of $a$ over $\h{\HH}$} is the element $c\in \ell^{\infty}(\h{\G})$ defined by
\begin{equation*}
c = (\h{h}\otimes \i)[(p_{\HH}\otimes \i)\h{\D}(a)].
\end{equation*}
\end{de}

We now prove that this averaging process is well-behaved with respect to completely bounded norms and that it yields the identity on $\h{\HH}$.

\begin{lem}\label{lem:averagingmultipliers}
The averaging of $a$ over $\h{\HH}$ satisfies $\|m_{c}\|_{cb}\leqslant \vert \h{h}(p_{\HH})\vert \|m_{a}\|_{cb}$. Moreover, $m_{c}$ is a multiple of the identity on $C_{\text{red}}(\HH)\subset C_{\text{red}}(\G)$.
\end{lem}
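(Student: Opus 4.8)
The statement has two parts: a completely-bounded norm bound and an identification of $m_c$ on the subgroup. I would treat them separately, starting with the norm bound since it is the more substantial claim. The natural tool is Daws' version of Gilbert's criterion (Theorem \ref{thm:quantumgilbert}): if $m_a$ is completely bounded, there are $\xi, \eta \in \B(L^2(\G), L^2(\G)\otimes K)$ with $\|\xi\|\|\eta\| = \|m_a\|_{cb}$ satisfying Equation (\ref{eq:quantumgilbert}). The plan is to feed the averaging formula $c = (\h{h}\otimes\i)[(p_{\HH}\otimes\i)\h{\D}(a)]$ into this representation and produce new Gilbert data $(\xi_c, \eta_c)$ implementing $m_c$, with norms controlled by $\vert\h{h}(p_{\HH})\vert\,\|m_a\|_{cb}$. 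Concretely, applying $\h{\D}$ and then slicing by $\h{h}$ against $p_{\HH}$ amounts to conjugating the defining equation by $\h{W}$ (since $\h{\D}(x) = \h{W}^*(1\otimes x)\h{W}$) and integrating; the coassociativity of $\h{\D}$ and the pentagon relation $W_{12}W_{13}W_{23} = W_{23}W_{12}$ for the multiplicative unitary should let the $\h{W}$-factors recombine so that the averaged equation is again of the Gilbert form $(1\otimes\eta_c)^*\h{W}_{12}^*(1\otimes\xi_c)\h{W} = c\otimes 1$.

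For the norm estimate itself, the key point is that averaging against the finite Haar weight $\h{h}$ cut down by $p_{\HH}$ is a positive operation of ``mass'' $\h{h}(p_{\HH})$. I would write $c$ as an integral (finite sum, since $\h{\HH}$ is finite) of translates of $a$ and estimate the completely bounded norm of the corresponding multiplier by the triangle inequality, each translate contributing $\|m_a\|_{cb}$ and the total weight being $\h{h}(p_{\HH})$. The finiteness of $\h{\HH}$ is exactly what makes $\h{h}(p_{\HH}) = \sum_{\alpha\in\Ir(\HH)}\Tr(p_\alpha) < \infty$, so the bound $\|m_c\|_{cb}\leqslant\vert\h{h}(p_{\HH})\vert\,\|m_a\|_{cb}$ is finite and meaningful.

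For the second assertion, I would compute $m_c$ directly on coefficients of an irreducible representation $\alpha$ of $\HH$, using the defining relation $(m_c\otimes\i)(u^\alpha) = (1\otimes c\,p_\alpha)u^\alpha$. The idea is that for $\alpha\in\Ir(\HH)$, the element $c\,p_\alpha$ should be a scalar multiple of $p_\alpha$: expanding $c = (\h{h}\otimes\i)[(p_{\HH}\otimes\i)\h{\D}(a)]$ against $p_\alpha$ and using $\h{\D}(p_{\HH})(p_{\HH}\otimes 1) = p_{\HH}\otimes p_{\HH}$ from Proposition \ref{prop:subgroup}(1), together with the invariance property of $\h{h}$ on the finite (hence unimodular) subgroup $\h{\HH}$, the dependence on $a$ should collapse into a single scalar, namely (up to normalization) $\h{h}(a\,p_{\HH})$ or the like. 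This would force $m_c$ to act as that scalar times the identity on $\Pol(\HH)$, hence on $C_{\text{red}}(\HH)$ by density and boundedness.

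\textbf{Main obstacle.} I expect the genuine difficulty to be the first part: manipulating the Gilbert data through the coproduct and the slice $\h{h}(\cdot\, p_{\HH})$ so that the averaged equation is again of the required form with explicitly bounded $\xi_c, \eta_c$. Keeping track of leg-numbering in $\B(L^2(\G))^{\otimes 3}$, correctly inserting the $\h{W}$ from $\h{\D}(a)$ into the existing $\h{W}$ of the Gilbert relation, and verifying that the recombination indeed uses the pentagon equation rather than something stronger is where the care is needed. By contrast, the second assertion, while requiring an honest computation with $p_{\HH}$ and $\h{\D}$, is essentially an application of Proposition \ref{prop:subgroup} and should be routine once the averaging formula is unwound.
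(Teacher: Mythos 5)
Your plan is, in substance, the paper's proof. For the norm bound the paper does exactly what your first paragraph describes: apply $(\h{\D}\otimes\i)$ to the Gilbert identity for $a$, commute the copies of $\h{W}_{12}$ past $\xi$ and $\eta$ (they act on disjoint legs), and use the pentagon equation twice to arrive at
\begin{equation*}
\h{\D}(a)\otimes 1 = (1\otimes 1\otimes \eta^{*})(\h{W}^{*}_{23}\h{W}_{13}^{*}\otimes 1)(1\otimes 1\otimes \xi)\h{W}_{13}\h{W}_{23},
\end{equation*}
then slice the first leg by $\h{h}(p_{\HH}\,\cdot\,)$. The second assertion is also proved exactly as you outline: for $\alpha\in\Ir(\HH)$ one computes $cp_{\alpha} = \h{h}(p_{\HH}a)p_{\alpha}$ using $\h{\D}(p_{\HH})(p_{\HH}\otimes 1) = p_{\HH}\otimes p_{\HH}$ and the invariance of $\h{h}$, so $m_{c} = \h{h}(p_{\HH}a)\Id$ on $C_{\text{red}}(\HH)$.

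Two points where your plan should be tightened. First, once the displayed identity is sliced, the norm estimate is already finished; your separate second paragraph is unnecessary. In that identity the first leg occurs only in $\h{W}_{13}^{*}\cdots\h{W}_{13}$, i.e.\ wrapped around $\xi$, so the slice produces a single new map $\xi' = (\h{h}\otimes \i\otimes \i)[\h{W}_{12}^{*}(p_{\HH}\otimes \xi)\h{W}]$ with $\|\xi'\|\leqslant \h{h}(p_{\HH})\|\xi\|$, while $\eta$ passes through untouched; this is why the constant is $\h{h}(p_{\HH})$ to the first power (averaging both maps, as your notation $(\xi_{c},\eta_{c})$ suggests, would produce $\h{h}(p_{\HH})^{2}$). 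Second, and more seriously, your fallback argument --- decomposing $c$ into ``translates'' $(\Tr(\,\cdot\,p_{\beta})\otimes\i)\h{\D}(a)$, $\beta\in\Ir(\HH)$, and summing cb norms --- presupposes that each slice of the coproduct by a positive functional $\w$ is again a cb multiplier with $\|m_{(\w\otimes\i)\h{\D}(a)}\|_{cb}\leqslant\|\w\|\,\|m_{a}\|_{cb}$. For classical groups this is the elementary translation-invariance of the Herz--Schur norm, but for a quantum group this claim is precisely what the pentagon computation establishes; used as a substitute for that computation, the step assumes the very fact to be proved. So the triangle-inequality route is not a genuine alternative: it is either redundant (if you have already done the Gilbert recombination) or incomplete (if you have not).
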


\begin{proof}
If $\eta, \xi : L^{2}(\G) \rightarrow L^{2}(\G)\otimes K$ are the maps coming from Theorem \ref{thm:quantumgilbert}, we set
\begin{equation}
\xi' = (\h{h}\otimes \i\otimes \i)[\h{W}_{12}^{*}(p_{\HH}\otimes \xi)\h{W}].
\end{equation}
Let us make the meaning of this definition clear : if $x\in \ell^{2}(\G)\otimes L^{2}(\G)$, then $\h{W}x$ belongs to the same space so that we can apply $\xi$ to its second leg, yielding an element $y\in \ell^{2}(\h{\G})\otimes L^{2}(\G)\otimes K$ to which we can apply $\h{W}^{*}_{12}$. Now, taking $p_{\HH}$ into account we see that the term inside the brackets is a linear map from $\ell^{2}(\HH)\otimes L^{2}(\G)$ to $\ell^{2}(\HH)\otimes L^{2}(\G)\otimes K$. Eventually, applying $\h{h}$ to the first leg (note that it is defined on all $\B(\ell^{2}(\h{\HH}))$ because it is finite-dimensional) gives a map from $L^{2}(\G)$ to $L^{2}(\G)\otimes K$.

We are going to prove that $c\otimes 1 = (1\otimes \eta^{*})\h{W}_{12}^{*}(1\otimes \xi')\h{W}$, which will imply our claim on the completely bounded norm since $\|\xi'\|\leqslant \vert \h{h}(p_{\HH})\vert \|\xi\|$. First note that since
\begin{equation*}
(\h{\D}\otimes \i)(x) = \h{W}_{12}^{*}(1\otimes x)\h{W}_{12},
\end{equation*}
we have
\begin{eqnarray*}
\h{\D}(a)\otimes 1 & = & \h{W}_{12}^{*}(1\otimes 1\otimes \eta^{*})(1\otimes \h{W}^{*}_{12})(1\otimes 1\otimes \xi)(1\otimes \h{W})\h{W}_{12} \\
& = & (1\otimes 1\otimes \eta^{*})(\h{W}^{*}_{12}\otimes 1)(1\otimes \h{W}_{12}^{*})(\h{W}_{12}\otimes 1)(1\otimes 1\otimes \xi)\h{W}_{12}^{*}\h{W}_{23}\h{W}_{12} \\
& = & (1\otimes 1\otimes \eta^{*})(\h{W}^{*}_{23}\h{W}_{13}^{*}\otimes 1)(1\otimes 1\otimes \xi)\h{W}_{13}\h{W}_{23}
\end{eqnarray*}
where we used twice the pentagonal equation for $\h{W}$. Applying $\h{h}(p_{\HH}\: .)$ to the first leg yields the result. Now, if $\alpha\in \Ir(\HH)$, we get using the invariance of $\h{h}$,
\begin{eqnarray*}
cp_{\alpha} & = & (\h{h}\otimes \i)[(p_{\HH}\otimes \i)\h{\D}(a)]p_{\alpha} \\
& = & (\h{h}\otimes \i)[(p_{\HH}\otimes p_{\HH})\h{\D}(a)]p_{\alpha} \\
& = & (\h{h}\otimes \i)[\h{\D}(p_{\HH}a)]p_{\alpha} \\
& = & \h{h}(p_{\HH}a)p_{\alpha}
\end{eqnarray*}
and $m_{c} = \h{h}(p_{\HH}a)\Id$ on $C_{\text{red}}(\HH)$.
\end{proof}

Using this averaging technique, we get the result for the Haagerup property.

\begin{thm}\label{thm:haagerupamalgamated}
Let $(\h{\G}_{i})_{i\in I}$ be a family of discrete quantum groups with the Haagerup property and let $\h{\HH}$ be a common \emph{finite} quantum subgroup. Then, $\ast_{\h{\HH}}\h{\G}_{i}$ has the Haagerup property.
\end{thm}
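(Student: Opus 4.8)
The plan is to reduce to two factors, upgrade the averaging of Lemma~\ref{lem:averagingmultipliers} so that it produces \emph{unital completely positive} multipliers which are exactly the identity on the amalgam, and then invoke the free product machinery of \cite{boca1993method} exactly as in Proposition~\ref{prop:haagerupnonamalgamates}. As there, it suffices to treat $\h{\G}_{1}\ast_{\h{\HH}}\h{\G}_{2}$. I would start with nets implementing the Haagerup property in the convenient form: by \cite[Thm 5.9]{daws2012completely} we may take $a^{i}_{t}=(\w^{i}_{t}\otimes \i)(W_{i})$ with $\w^{i}_{t}$ a state on $C_{\text{max}}(\G_{i})$, so that $m_{a^{i}_{t}}$ is completely positive, $a^{i}_{t}\to 1$ pointwise and $a^{i}_{t}\in C_{0}(\h{\G}_{i})$.

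First I would average each $a^{i}_{t}$ over $\h{\HH}$ to obtain $c^{i}_{t}$ as in Lemma~\ref{lem:averagingmultipliers}. The Lemma already gives $m_{c^{i}_{t}}=\h{h}(p_{\HH}a^{i}_{t})\Id$ on $C_{\text{red}}(\HH)$; since the trivial representation lies in $\Ir(\HH)$, the scalar $\lambda^{i}_{t}:=\h{h}(p_{\HH}a^{i}_{t})$ also equals $m_{c^{i}_{t}}(1)$. As $a^{i}_{t}\to 1$ pointwise one checks $\lambda^{i}_{t}\to \h{h}(p_{\HH})>0$ and $c^{i}_{t}\to \h{h}(p_{\HH})\,1$ pointwise, so that for large $t$ the normalised elements $d^{i}_{t}:=(\lambda^{i}_{t})^{-1}c^{i}_{t}$ satisfy $d^{i}_{t}\to 1$ pointwise, $d^{i}_{t}\in C_{0}(\h{\G}_{i})$, and $m_{d^{i}_{t}}=\Id$ on $C_{\text{red}}(\HH)$ with $m_{d^{i}_{t}}(1)=1$.

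The crucial point, and what I expect to be the main obstacle, is that $m_{c^{i}_{t}}$ is still completely positive: Lemma~\ref{lem:averagingmultipliers} only controls the completely bounded norm, and the Gilbert representation it produces, which modifies $\xi$ but not $\eta$, is not manifestly symmetric. This is exactly where the finiteness of $\h{\HH}$ and the \emph{bi-invariance} of $\h{h}$ are essential. Writing $a^{i}_{t}=(\w^{i}_{t}\otimes \i)(W_{i})$, the averaging amounts at the level of implementing functionals to replacing $\w^{i}_{t}$ by $\w^{i}_{t}\circ T_{\HH}$, where $T_{\HH}$ is the unital completely positive averaging over $\h{\HH}$ (convolution by the normalised Haar state of $\HH$, which is genuinely two-sided precisely because $\h{h}$ is both left and right invariant). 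Thus $c^{i}_{t}=(\nu^{i}_{t}\otimes \i)(W_{i})$ for a positive functional $\nu^{i}_{t}$, and after normalisation $d^{i}_{t}=(\mu^{i}_{t}\otimes \i)(W_{i})$ with $\mu^{i}_{t}=(\lambda^{i}_{t})^{-1}\nu^{i}_{t}$ a state; by \cite[Thm 5.9]{daws2012completely} the multiplier $m_{d^{i}_{t}}$ is then unital and completely positive. Moreover $m_{d^{i}_{t}}=\Id$ on the amalgam forces $\mu^{i}_{t}$ to restrict to the counit of $C_{\text{max}}(\HH)$, so $\mu^{1}_{t}$ and $\mu^{2}_{t}$ agree on the common subalgebra $C_{\text{max}}(\HH)$.

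With these compatible unital completely positive maps in hand, I would form the amalgamated free product state $\mu_{t}:=\mu^{1}_{t}\ast_{\HH}\mu^{2}_{t}$ and set $d_{t}=(\mu_{t}\otimes \i)(W)$, exactly as in Proposition~\ref{prop:haagerupnonamalgamates}; this is the completely positive multiplier $m_{d_{t}}=m_{d^{1}_{t}}\ast m_{d^{2}_{t}}$, now legitimately defined over the amalgam because the $m_{d^{i}_{t}}$ restrict to the identity on $C_{\text{red}}(\HH)$ and preserve the conditional expectations of Proposition~\ref{prop:conditionalexpectationsubgroup}. Since $d^{i}_{t}\to 1$ pointwise, so does $d_{t}$, the irreducibles of $\K=\G_{1}\ast_{\HH}\G_{2}$ being built from those of the factors. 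Finally, the same Haar-state computation as in the proof of Proposition~\ref{prop:haagerupnonamalgamates} gives $(h\otimes \i)((m_{d_{t}}\otimes \i)(u^{\alpha})(u^{\alpha})^{*})=d_{t}p_{\alpha}$ for every $\alpha\in \Ir(\K)$, and since $m_{d_{t}}$ is $L^{2}$-compact as a free product of $L^{2}$-compact maps by \cite[Thm 3.9]{boca1993method}, this yields $d_{t}\in C_{0}(\ast_{\h{\HH}}\h{\G}_{i})$. Thus $(d_{t})$ witnesses the Haagerup property.
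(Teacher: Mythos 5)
Your overall architecture coincides with the paper's: reduce to unital completely positive multipliers that restrict to the identity on $C_{\text{red}}(\HH)$ and are compatible with $\E_{\HH}$, form their amalgamated free product, and conclude via the $L^{2}$-compactness argument of Proposition \ref{prop:haagerupnonamalgamates}. The genuine gap is at the step you yourself single out as crucial, and your proposed resolution of it is incorrect: a \emph{single} averaging over $\h{\HH}$ does \emph{not} preserve complete positivity, and $c^{i}_{t}$ is in general not of the form $(\nu^{i}_{t}\otimes\i)(W_{i})$ with $\nu^{i}_{t}$ positive. At the level of implementing functionals, the operation $a\mapsto(\h{h}\otimes\i)[(p_{\HH}\otimes\i)\h{\D}(a)]$ is not precomposition with a completely positive "convolution" map $T_{\HH}$: it replaces $\w$ by the functional $x\mapsto\w(xb)$, where $b=(\i\otimes\h{h})\bigl(W(1\otimes p_{\HH})\bigr)\in C(\HH)$, i.e.\ by a \emph{one-sided} multiplication inside $\w$; the bi-invariance of $\h{h}$ is of no help here, because $\h{\D}$ is applied to one leg only, so the averaging is genuinely one-sided. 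Concretely, take $\h{\G}=S_{3}$ and $\h{\HH}=\{e,\sigma\}$ with $\sigma=(12)$, so that $C(\G)=C^{*}(S_{3})$, multipliers are Herz--Schur multipliers given by functions on $S_{3}$, and the averaging becomes $c(\gamma)=a(\gamma)+a(\sigma\gamma)$ (or the right-handed version, depending on leg conventions). Let $\pi$ be the two-dimensional irreducible representation, $\xi=\tfrac{1}{\sqrt{2}}(1,1)$ and $a(\gamma)=\langle\pi(\gamma)\xi,\xi\rangle$, a normalized positive-definite function, so that $m_{a}$ is unital completely positive and $a$ is the transform of a state on $C^{*}(S_{3})$. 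Then $c(\gamma)=\langle\pi(\gamma)\xi,(1+\pi(\sigma))\xi\rangle$ satisfies $c(e)=1$ while one of the two $3$-cycles $\gamma_{0}$ gives $c(\gamma_{0})=-\tfrac{1+\sqrt{3}}{2}$, so $|c(\gamma_{0})|>c(e)$ (the same happens for the right-handed average). Hence $c$ is not positive definite, $m_{c}$ is not completely positive, and $c$ is not the transform of any positive functional. Your nets $(d^{i}_{t})$ therefore need not consist of completely positive multipliers, and the free product step collapses.

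This failure is exactly what the paper's proof is designed to repair, and the repair is the step your proposal omits. After the first averaging, the Gilbert representation of $m_{b_{t}}$ is lopsided, $m_{b_{t}}(x)=\xi^{*}(x\otimes 1)\xi'$, only one leg having been averaged; the paper then applies the unitary antipode, using $m_{\h{S}(b_{t})^{*}}(x)=(\xi')^{*}(x\otimes 1)\xi$ from \cite{kraus1999approximation}, and averages a \emph{second} time, so that the other leg gets averaged as well and one ends with $m_{b'_{t}}(x)=(\xi')^{*}(x\otimes 1)\xi'$, which is manifestly completely positive. Equivalently, in the group picture the double averaging produces the two-sided sum $\gamma\mapsto\sum_{\lambda,\lambda'}a(\lambda\gamma\lambda')$, i.e.\ the functional $x\mapsto\w(b^{*}xb)$, which is positive, in contrast with your one-sided $\w(xb)$. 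Your argument can be saved by inserting this symmetrization, after which your functional-calculus picture becomes correct. Two further points you assert without proof, namely that $\E_{\HH}\circ m_{d^{i}_{t}}=\E_{\HH}$ and that the averaged elements retain the $C_{0}$ and pointwise-convergence properties, also require justification; the paper obtains the former from the uniqueness (Proposition \ref{prop:conditionalexpectationsubgroup}) of the Haar-state-preserving conditional expectation, and the latter from the Frobenius reciprocity argument written out in Lemma \ref{lem:invariantwa}.
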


\begin{proof}
We first prove that completely positive multipliers on $C_{\text{red}}(\G_{i})$ can be averaged so that they become the identity on $C_{\text{red}}(\HH)$. Let us first recall that by \cite[Prop 2.6]{kraus1999approximation}, if $c\in \ell^{\infty}(\h{\G}_{i})$ satisfies $m_{c}(x) = \eta^{*}(x\otimes 1)\xi$ for some maps $\xi, \eta\in \B(L^{2}(\G_{i}), L^{2}(\G_{i})\otimes K)$, then
\begin{equation*}
m_{\h{S}(c)^{*}}(x) = \xi^{*}(x\otimes 1)\eta,
\end{equation*}
where $\h{S}$ denotes the antipode of $\h{\G}_{i}$. Let $(a_{t})$ be a net of elements in $C_{0}(\h{\G}_{i})$ converging pointwise to $1$ and such that $m_{a_{t}}$ is unital and completely positive. Let $b_{t}$ be the averaging of $a_{t}$ over $\h{\HH}$. Average again $\h{S}(b_{t})^{*}$ to produce a third element $b'_{t}$. Then, by Lemma \ref{lem:averagingmultipliers}, $(b'_{t})$ is a net of elements in $C_{0}(\h{\G}_{i})$ converging pointwise to $1$ and such that $m_{b'_{t}}$ is the identity on $C_{\text{red}}(\HH)$. In the case of a unital completely positive multiplier, the proof of Theorem \ref{thm:quantumgilbert} yields a map $\xi : L^{2}(\G_{i}) \rightarrow L^{2}(\G_{i})\otimes K$ such that $m_{a_{t}}(x) = \xi^{*}(x\otimes \i)\xi$. This implies that $m_{b'_{t}}(x) = (\xi')^{*}(x \otimes 1)\xi'$ is unital and completely positive. Moreover, the conditional expectation $\E_{\HH}\circ m_{b'_{t}}$ is invariant with respect to the Haar state, because $m_{b'_{t}}$ is invariant. Since, by \cite[Prop 2.2]{vergnioux2004k}, $\E_{\HH}$ is the unique conditional expectation satisfying this invariance, we have $\E_{\HH}\circ m_{b'_{t}} = \E_{\HH}$. 

Now that all the multipliers are the identity on $C_{\text{red}}(\HH)$ and preserve the conditional expectation $\E_{\HH}$, their amalgamated free product makes sense (see for example \cite[Thm 4.8.5]{brown2008finite} for details on the construction of the free product of u.c.p. maps). We can then follow the proof of Proposition \ref{prop:haagerupnonamalgamates} to conclude.
\end{proof}

Let us turn to weak amenability, which is more involved, and let us check that our averaging technique preserves weak amenability.

\begin{lem}\label{lem:invariantwa}
Let $\h{\G}$ be a discrete quantum group, $\h{\HH}$ a \emph{finite} quantum subgroup of $\h{\G}$ and let $(a_{t})$ be a net of finitely supported elements in $\ell^{\infty}(\h{\G})$ converging pointwise to $1$. Then, there exists a net $(b_{t})$ of finite-rank elements in $\ell^{\infty}(\h{\G})$ converging pointwise to $1$, such that
\begin{equation*}
\limsup_{t} \|m_{b_{t}}\|_{cb}\leqslant \limsup_{t}\|m_{a_{t}}\|_{cb}
\end{equation*}
and $m_{b_{t}}$ is the identity on $C_{\text{red}}(\HH)\subset C_{\text{red}}(\G)$.
\end{lem}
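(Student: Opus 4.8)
The plan is to mirror the averaging argument already developed for the Haagerup property (Lemma \ref{lem:averagingmultipliers} and Theorem \ref{thm:haagerupamalgamated}), but now tracking the completely bounded norm of \emph{finitely supported} multipliers rather than complete positivity. The goal is to normalize the given net $(a_t)$ so that each resulting multiplier fixes $C_{\text{red}}(\HH)$ pointwise, while not increasing the limsup of the cb-norms. Since $\h{\HH}$ is finite, the averaging operator $a \mapsto (\h h \otimes \i)[(p_{\HH} \otimes \i)\h\D(a)]$ is exactly the tool for this.

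First I would average each $a_t$ over $\h{\HH}$ to obtain an element $c_t$, and invoke Lemma \ref{lem:averagingmultipliers} to get both the cb-norm bound $\|m_{c_t}\|_{cb} \leqslant |\h h(p_{\HH})|\,\|m_{a_t}\|_{cb}$ and the fact that $m_{c_t}$ acts as the scalar $\h h(p_{\HH} a_t)$ on $C_{\text{red}}(\HH)$. Because $\h{\HH}$ is finite and $(a_t)$ converges pointwise to $1$, the scalars $\h h(p_{\HH} a_t) \to \h h(p_{\HH})$, which is a nonzero constant independent of $t$; this is the normalizing factor that the $|\h h(p_{\HH})|$ in the cb-bound is designed to cancel. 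The natural candidate is then $b_t = c_t / \h h(p_{\HH} a_t)$ (for $t$ large enough that the denominator is nonzero), so that $m_{b_t}$ is genuinely the identity on $C_{\text{red}}(\HH)$ and $\limsup_t \|m_{b_t}\|_{cb} \leqslant \limsup_t \|m_{a_t}\|_{cb}$, the spurious factor $|\h h(p_{\HH})|$ being absorbed by the normalization in the limit. I would also check that $b_t$ remains finitely supported (averaging over a finite subgroup preserves finite support, since $\h\D$ spreads support only across the finitely many representations in $\Ir(\HH)$) and converges pointwise to $1$ (using pointwise convergence of $(a_t)$ together with normality/finite-dimensionality of the averaging map on each block $p_\alpha \ell^\infty(\h\G)$).

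The step I expect to be the most delicate is verifying \emph{pointwise convergence} of $(b_t)$ to $1$ on a fixed block $\B(H_\alpha)$ together with the identity property on $C_{\text{red}}(\HH)$ simultaneously. The identity property forces $m_{b_t}$ to be exactly $\Id$ on the amalgam for every $t$, whereas the convergence is only asymptotic off the amalgam; one must confirm that the normalization by $\h h(p_{\HH}a_t)$ does not destroy pointwise convergence on blocks $\alpha \notin \Ir(\HH)$. This reduces to the estimate that $(\h h \otimes \i)[(p_{\HH}\otimes \i)\h\D(a_t)]p_\alpha \to (\h h \otimes \i)[(p_{\HH}\otimes \i)\h\D(1)]p_\alpha = \h h(p_{\HH})p_\alpha$, which follows because $\h\D(a_t)$ restricted through $(p_{\HH}\otimes p_\alpha)$ involves only the finitely many fusion components of $\alpha \otimes \beta$ for $\beta \in \Ir(\HH)$, on all of which $a_t$ converges; dividing by $\h h(p_{\HH}a_t) \to \h h(p_{\HH})$ then gives $b_t p_\alpha \to p_\alpha$ as desired. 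Once these three properties — finite support, pointwise convergence to $1$, and the identity on $C_{\text{red}}(\HH)$ — are in hand along with the cb-norm inequality, the Lemma is proved, setting the stage for forming the amalgamated free product of the $m_{b_t}$.
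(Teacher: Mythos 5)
Your proposal is correct and follows essentially the same route as the paper's proof: average $a_t$ over $\h{\HH}$ to get $c_t$, normalize by the scalar $\h{h}(p_{\HH}a_t)$ to obtain $b_t$, verify finite support via the fusion rules with $\Ir(\HH)$ (Frobenius reciprocity), and establish pointwise convergence by noting that $(p_{\HH}\otimes p_\alpha)\h{\D}(a_t)$ only sees the finitely many components of $a_t$ lying in the fusion set of $\alpha$ with $\Ir(\HH)$, so that the normalization factor tends to $\h{h}(p_{\HH})$ and cancels the constant $\vert\h{h}(p_{\HH})\vert$ from Lemma \ref{lem:averagingmultipliers} in the limit. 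This is precisely the paper's argument, including its identification of the pointwise convergence of $(b_t)$ as the step requiring the explicit finite-set estimate.
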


\begin{proof}
Let $c_{t}$ be the averaging of $a_{t}$ over $\h{\HH}$ and let $\Supp(a_{t})$ be the set of equivalence classes of irreducible representations $\alpha$ of $\G$ such that $a_{t}p_{\alpha}\neq 0$. Then,
\begin{equation*}
c_{t}p_{\alpha} = (\h{h}\otimes \i)\left[(p_{\HH}\otimes p_{\alpha})\h{\D}(a)\right] = (\h{h}\otimes \i)\left[\sum_{\beta\in \Ir(\HH)}\sum_{\gamma\in \Supp(a)}\h{\D}(p_{\gamma})(p_{\beta}\otimes p_{\alpha})\h{\D}(a)\right]
\end{equation*}
By definition, $\h{\D}(p_{\gamma})(p_{\beta}\otimes p_{\alpha})\neq 0$ if and only if $\gamma\subset \beta\otimes \alpha$, which by Frobenius reciprocity (see e.g. \cite[Prop 3.1.11]{timmermann2008invitation}) is equivalent to $\overline{\alpha}\subset \overline{\gamma}\otimes \beta$. Hence, $c_{t}p_{\alpha}$ is non-zero only if $\alpha$ belongs to the finite set
\begin{equation*}
\bigcup_{\beta\in \Ir(\HH)}\bigcup_{\gamma\in \Supp(a)}\{\alpha\in \Ir(\G), \overline{\alpha}\subset \overline{\gamma}\otimes \beta\}
\end{equation*}
and $c_{t}$ has finite support. The same holds for
\begin{equation*}
b_{t} = \frac{1}{\h{h}(p_{\HH}a_{t})}c_{t},
\end{equation*}
which induces multipliers $m_{b_{t}}$ which are the identity on $C_{\text{red}}(\HH)$.

Assume now that $a_{t}$ converges pointwise to $1$, and note that the inequality of the completely bounded norms is obvious from Lemma \ref{lem:averagingmultipliers}. Fix $\beta\in \Ir(\G)$, $\epsilon > 0$ and let $D$ be the (finite) set of $\gamma\in \Ir(\G)$ which are contained in $\alpha\otimes \beta$ for some $\alpha\in \Ir(\HH)$. We denote by $p_{D}$ the sum of the projections $p_{\gamma}$ for $\gamma\in D$. Let $t$ be such that :
\begin{itemize}
\item $\|a_{t}p_{D} - p_{D}\|\leqslant \displaystyle\frac{\epsilon}{4}$
\item $\|a_{t}p_{\HH} - p_{\HH}\|\leqslant \displaystyle\frac{\epsilon}{4}$
\item $\vert \h{h}(a_{t}p_{\HH})\vert > \displaystyle\frac{1}{2}$
\end{itemize}
Then,
\begin{eqnarray*}
\|b_{t}p_{\beta} - p_{\beta}\| & = & \|\vert\h{h}(a_{t}p_{\HH})\vert^{-1}(\h{h}\otimes \i)[(p_{\HH}\otimes p_{\beta})\h{\D}(a_{t})] - p_{\beta}\| \\
& = & \vert\h{h}(a_{t}p_{\HH})\vert^{-1}\|(\h{h}\otimes \i)[(p_{\HH}\otimes p_{\beta})\h{\D}(a_{t}) - (a_{t}p_{\HH}\otimes p_{\beta})]\| \\
& = & \vert\h{h}(a_{t}p_{\HH})\vert^{-1}\|(\h{h}\otimes \i)[(p_{\HH}\otimes p_{\beta})\h{\D}(a_{t}p_{D}) - (a_{t}p_{\HH}\otimes p_{\beta})]\| \\
& \leqslant & \vert\h{h}(a_{t}p_{\HH})\vert^{-1}\|(\h{h}\otimes \i)[(p_{\HH}\otimes p_{\beta})\h{\D}(a_{t}p_{D} - p_{D})]\| \\
& + & \vert\h{h}(a_{t}p_{\HH})\vert^{-1}\|(\h{h}\otimes \i)[(p_{\HH}\otimes p_{\beta})\h{\D}(p_{D}) - p_{\HH}\otimes p_{\beta}]\| \\
& + & \vert\h{h}(a_{t}p_{\HH})\vert^{-1}\|(\h{h}\otimes \i)[(p_{\HH} - a_{t}p_{\HH})\otimes p_{\beta})]\| \\
& \leqslant & \vert \h{h}(a_{t}p_{\HH})\vert^{-1}(\|a_{t}p_{D} - p_{D}\| + \|a_{t}p_{\HH} - p_{\HH}\|) \\
& \leqslant & 2\left(\frac{\epsilon}{4} + \frac{\epsilon}{4}\right) = \epsilon
\end{eqnarray*}
using the fact that $(p_{\HH}\otimes p_{\beta})\h{\D}(p_{D}) = p_{\HH}\otimes p_{\beta}$.
\end{proof}

From this it is easy to prove using the amalgamated version of \cite[Thm 4.3]{ricard2005khintchine} that if the quantum groups $\G_{i}$ are amenable, then the free product amalgamated over a finite quantum subgroup is weakly amenable with Cowling-Haagerup constant equal to $1$, thus generalizing a result of M. Bo\.{z}ejko and M.A. Picardello \cite{bozejko1993weakly} (see \cite[Thm 2.3.15]{freslon2013proprietes} for a proof). When the quantum groups are not amenable but are weakly amenable with Cowling-Haagerup constant equal to $1$, we need the full power of the work of E. Ricard and X. Qu. Let us introduce some notations : if $A$ is a C*-algebra with a conditional expectation $\E$, we denote by $L^{2}(A, \E)$ (resp. $L^{2}(A, \E)^{op}$) the Hilbert module obtained by the GNS constructions using the inner product $\langle a, b\rangle = \E(ab^{*})$ (resp. $\langle a, b\rangle = \E(a^{*}b)$).

\begin{thm}[Ricard, Xu]\label{thm:freecmap}
Let $C$ be a C*-algebra, let $(B_{i})_{i\in I}$ be unital C*-algebras together with GNS-faithful conditional expectations $\E_{B_{i}} : B_{i} \rightarrow C$. Let $A_{i}\subset B_{i}$ be unital C*-subalgebras with GNS-faithful conditional expectations $\E_{A_{i}} : A_{i} \rightarrow C$ which are the restrictions of $\E_{B_{i}}$. Assume that for each $i$, there is a net of finite-rank maps $(V_{i, j})_{j}$ on $A_{i}$ converging to the identity pointwise, satisfying $\E_{A_{i}}\circ V_{i, j} = \E_{A_{i}}$ and such that $\limsup_{j}\|V_{i, j}\|_{cb}=1$. Assume moreover that for each pair $(i, j)$, there is a completely positive unital map $U_{i, j} : A_{i} \rightarrow B_{i}$ satisfying $\E_{B_{i}}\circ U_{i, j} = \E_{A_{i}}$ and such that
\begin{equation*}
\| V_{i, j} - U_{i, j}\|_{cb} + \| V_{i, j}-U_{i, j}\|_{\B(L^{2}(A_{i}, \E_{A_{i}}), L^{2}(B_{i}, \E_{B_{i}}))} + \| V_{i, j} - U_{i, j}\|_{\B(L^{2}(A_{i}, \E_{A_{i}})^{op}, L^{2}(B_{i}, \E_{B_{i}})^{op})} \underset{j}{\rightarrow} 0.
\end{equation*}
Assume moreover that the maps $V_{i, j}$ and $U_{i, j}$ are the identity on $C$ for all $i, j$. Then, the reduced amalgamated free product $\ast_{C} (A_{i}, \E_{A_{i}})$ has Cowling-Haagerup constant equal to $1$.
\end{thm}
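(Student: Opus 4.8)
The plan is to realize the approximating net on $A:=\ast_{C}(A_{i},\E_{A_{i}})$ as the free product of the maps $V_{i,j}$, and to control its completely bounded norm by comparing it with the free product of the completely positive dilations $U_{i,j}$. Since each $\E_{A_{i}}$ is the restriction of $\E_{B_{i}}$, the compatible conditional expectations embed $A$ as a reduced subalgebra of $B:=\ast_{C}(B_{i},\E_{B_{i}})$, and both carry the word-length grading into reduced words $a_{1}\cdots a_{n}$ with $a_{k}\in A_{i_{k}}^{\circ}:=\ker\E_{A_{i_{k}}}$ and $i_{1}\neq\cdots\neq i_{n}$, realized on the associated Fock $C$-module. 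Because $\E_{A_{i}}\circ V_{i,j}=\E_{A_{i}}$ and $V_{i,j}$ is the identity on $C$, we have $V_{i,j}(A_{i}^{\circ})\subset A_{i}^{\circ}$, and likewise $U_{i,j}(A_{i}^{\circ})\subset B_{i}^{\circ}$; hence the algebraic free products $V_{j}:=\ast_{C}V_{i,j}$ and $U_{j}:=\ast_{C}U_{i,j}$ are well-defined, length-preserving maps from $A$ to $B$ (after passing to a common directed set for the index $j$). By the amalgamated analogue of Boca's theorem \cite{boca1993method}, $U_{j}$ is unital completely positive, so $\|U_{j}\|_{cb}=1$.

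The heart of the argument is to show that $\|V_{j}-U_{j}\|_{cb}\to 0$; granting this, $\|V_{j}\|_{cb}\leqslant\|U_{j}\|_{cb}+\|V_{j}-U_{j}\|_{cb}=1+\|V_{j}-U_{j}\|_{cb}$, so $\limsup_{j}\|V_{j}\|_{cb}\leqslant 1$, and since $V_{j}\to\Id$ pointwise (the $V_{i,j}$ converge to the identity pointwise and the grading is respected) while unital maps have completely bounded norm at least $1$, this forces $\limsup_{j}\|V_{j}\|_{cb}=1$. To estimate the difference I would work grading piece by grading piece and telescope across the word, swapping $V_{i,j}$ for $U_{i,j}$ one letter at a time:
\begin{equation*}
(V_{j}-U_{j})(a_{1}\cdots a_{n})=\sum_{k=1}^{n}U_{i_{1},j}(a_{1})\cdots U_{i_{k-1},j}(a_{k-1})\,(V_{i_{k},j}-U_{i_{k},j})(a_{k})\,V_{i_{k+1},j}(a_{k+1})\cdots V_{i_{n},j}(a_{n}).
\end{equation*}
Each summand is then bounded on the Fock module by cutting the reduced word at the swap position $k$: the prefix built from the $U_{i,j}$ and the suffix built from the $V_{i,j}$ are estimated by products of the Hilbert-module norms $\|\cdot\|_{\B(L^{2}(A_{i},\E_{A_{i}}),L^{2}(B_{i},\E_{B_{i}}))}$ and of their opposite-module counterparts --- this is exactly why the two $L^{2}$-norms enter the hypothesis, the left module controlling left multiplication and the opposite module the right multiplication --- while the swapped letter contributes $\|V_{i_{k},j}-U_{i_{k},j}\|_{cb}$. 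All three quantities in the hypothesis tend to $0$, and the assumption $\limsup_{j}\|V_{i,j}\|_{cb}=1$, together with contractivity of the $U_{i,j}$, keeps the prefix and suffix factors bounded.

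The main obstacle is the control of the grading in this estimate: a crude term-by-term bound contributes a factor growing with the length $n$ (free products of completely bounded maps are not completely bounded in general), so the sum over $n$ threatens to diverge. Surmounting this is the genuine content of the Ricard--Xu Khintchine-type machinery \cite{ricard2005khintchine}: one must exploit the near-orthogonality of successive letters in the Fock module so that the $L^{2}$-module estimates for a string of letters multiply to a bounded quantity rather than compounding, and organize the telescoping so that the single small defect at the swap position is not amplified by the word length. Once $\limsup_{j}\|V_{j}\|_{cb}=1$ with $V_{j}\to\Id$ pointwise is established, it remains to meet the finite-rank requirement in the definition of $\Lambda_{cb}$ (as in Definition \ref{de:quantumwa}): since each $V_{i,j}$ is finite-rank, post-composing $V_{j}$ with a word-length cut-off onto reduced words of length at most $N$ produces a finite-rank map, and a diagonal argument in $(j,N)$ --- letting $N\to\infty$ slowly so that both the truncation error and the cut-off's contribution to the completely bounded norm vanish --- yields a finite-rank net witnessing $\Lambda_{cb}\big(\ast_{C}(A_{i},\E_{A_{i}})\big)=1$.
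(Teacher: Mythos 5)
You are attempting to prove something that the paper itself does not prove: Theorem \ref{thm:freecmap} is quoted from \cite{ricard2005khintchine}, and the remark following it only indicates that the amalgamated version is a consequence of the amalgamated Khintchine inequality of that paper. Measured against the actual Ricard--Xu argument, your outline does identify the right ingredients: Boca's theorem \cite{boca1993method} making $\ast_{C}U_{i,j}$ unital completely positive, a letter-by-letter telescoping comparison, the Khintchine inequality to control the word-length grading, and a truncation plus diagonal argument to meet the finite-rank requirement of Definition \ref{de:quantumwa}.

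However, the way you organize these ingredients contains a genuine gap that the cited machinery cannot repair. You make the global estimate $\|V_{j}-U_{j}\|_{cb}\to 0$ the heart of the proof, but this statement is not provable from the hypotheses: for fixed $j$ the map $V_{j}=\ast_{C}V_{i,j}$ need not be bounded at all (the paper's own remark stresses that this free product ``does not make sense \emph{a priori}''), whereas $U_{j}$ is u.c.p., so the difference can have infinite cb norm. Even length by length, the best comparison available from telescoping plus Khintchine is of order $d\,\|P_{d}\|_{cb}\,(1+\epsilon_{j})^{d}\epsilon_{j}$ on words of length $d$, where $P_{d}$ is the projection onto words of length $d$ (whose cb norm grows linearly in $d$ by the Khintchine inequality) and $\epsilon_{j}$ is the small quantity in your hypothesis; the sum over $d$ diverges for every fixed $j$, and no near-orthogonality argument changes this, because the claim being aimed at is false. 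The device you are missing is damping: one first considers $T_{r}=\ast_{C}\bigl((1-r)\E_{A_{i}}(\cdot)1+r\,\Id_{A_{i}}\bigr)$ for $0<r<1$, which is u.c.p. by Boca's theorem and multiplies length-$d$ words by $r^{d}$; since $r^{d}$ beats the linear growth of $\|P_{d}\|_{cb}$, the map $T_{r}$ is cb-approximated by its finite-length truncations, and only on these finitely many graded pieces does one substitute the finite-rank $V_{i,j}$ and run your telescoping against the $U_{i,j}$, letting $r\to 1$, the cutoff tend to infinity and $j\to\infty$ along a diagonal. Your final step suffers from the same defect: without damping there is no reason the word-length cutoff of $V_{j}$ converges to $V_{j}$ in cb norm. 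A secondary point specific to amalgamation: for $\ast_{C}V_{i,j}$ to be well defined even algebraically, the $V_{i,j}$ must be $C$-bimodule maps; this is automatic for the u.c.p. maps $U_{i,j}$ (being the identity on $C$ places $C$ in the multiplicative domain) but not for merely completely bounded maps, and in the application made in this paper it is secured separately by Lemma \ref{lem:bimodular}.
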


\begin{rem}
This statement is the same as \cite[Prop 4.11]{ricard2005khintchine} with states replaced by conditional expectations. This amalgamated version does not appear explicitly in \cite{ricard2005khintchine} but is a straightforward consequence of the amalgamated version of the Khintchine inequality proved in \cite[Sec 5]{ricard2005khintchine}. The idea of the proof is that the "free product" of the maps $V_{i, j}$, which does not make sense \emph{a priori} can be defined using the free product of the u.c.p. maps $U_{i, j}$ and the approximation assumption. Let us note that it is necessary to have bigger C*-algebras $B_{i}$ as ranges for the completely positive maps $U_{i, j}$, otherwise we would be assuming that the C*-algebras $A_{i}$ are nuclear.
\end{rem}

Based on the non-amalgamated case \cite{freslon2012note}, one can try the following strategy : if $(a_{t})$ is a net of elements implementing weak amenability, first average $a_{t}$ and then $S(a_{t})^{*}$ over $\h{\HH}$ to produce an element $b_{t}'$ in $\ell^{\infty}(\h{\G})$ and two maps $\eta'_{t}$ and $\xi'_{t}$ in $\B(L^{2}(\G), L^{2}(\G)\otimes K)$ satisfying
\begin{equation*}
m_{b_{t}'}(x) = (\eta'_{t})^{*}(x\otimes 1)\xi'_{t}
\end{equation*}
and such that the multiplier $m_{b_{t}'}$ is the identity on $C_{\text{red}}(\HH)$. Note that $\|1 - (\gamma'_{t})^{*}\gamma'_{t}\|\leqslant \|m_{b'_{t}}\|_{cb} -1$ so that $(\gamma'_{t})^{*}\gamma'_{t}$ is invertible if $m_{b'_{t}}$ is sufficiently close to the identity in completely bounded norm. Then, mimicking \cite[Lem 4.3]{freslon2012note}, setting $\gamma'_{t} = (\eta'_{t}+\xi'_{t})/2$ and $\tilde{\gamma}'_{t} = \gamma'_{t}\vert\gamma'_{t}\vert^{-1}$, the unital completely positive approximation we are looking for should be
\begin{equation*}
M_{\tilde{\gamma}'_{t}}(x) = (\tilde{\gamma}'_{t})^{*}(x\otimes 1)\tilde{\gamma}'_{t}.
\end{equation*}
The problem is then to prove that this operator is the identity on $C_{\text{red}}(\HH)$. We do not know whether this fact holds or not. However, we can use again an averaging trick, this time at the level of C*-algebras, to build a new unital completely positive approximation which will be the identity on $C_{\text{red}}(\HH)$. If $T : C_{\text{red}}(\G)\rightarrow \B(L^{2}(\G))$ is a linear map, we define a linear map $R_{\h{\HH}}(T)$ by
\begin{equation*}
R_{\h{\HH}}(T) : x \mapsto \int_{\mathcal{U}(C_{\text{red}}(\HH))} T(xv^{*})v dv,
\end{equation*}
where the integration is done with respect to the normalized Haar measure of the compact group $\mathcal{U}(C_{\text{red}}(\HH))$ (recall that $C_{\text{red}}(\HH)$ is finite-dimensional). Similarly, we define a linear map $L_{\h{\HH}}(T)$ by
\begin{equation*}
L_{\h{\HH}}(T) : x \mapsto \int_{\mathcal{U}(C_{\text{red}}(\HH))} u^{*}T(ux) du.
\end{equation*}

Let us give some elementary properties of these two operations.

\begin{lem}\label{lem:bimodular}
If $T$ is completely bounded, then $R_{\h{\HH}}(T)$ (resp. $L_{\h{\HH}}(T)$) is also completely bounded with $\|R_{\h{\HH}}(T)\|_{cb}\leqslant \|T\|_{cb}$ (resp. $\|L_{\h{\HH}}(T)\|_{cb} \leqslant \|T\|_{cb}$). Moreover, for any $a, b\in C_{\text{red}}(\HH)$,
\begin{equation}\label{eq:bimodular}
R_{\h{\HH}}\circ L_{\h{\HH}}(T)(axb) = a [R_{\h{\HH}}\circ L_{\h{\HH}}(T)(x)] b.
\end{equation}
\end{lem}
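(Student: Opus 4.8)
The plan is to treat the two assertions separately, handling first the completely bounded norm estimate and then the bimodularity identity~\eqref{eq:bimodular}, and to exploit throughout that $C_{\text{red}}(\HH)$ is finite-dimensional, hence spanned by its unitaries, and that $\mathcal{U}(C_{\text{red}}(\HH))$ is a compact group carrying a translation-invariant normalized Haar measure.

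For the norm estimate, I would observe that $R_{\h{\HH}}(T)$ is literally an average of the maps $T_{v} : x \mapsto T(xv^{*})v$ over $v \in \mathcal{U}(C_{\text{red}}(\HH))$. Each $T_{v}$ is the composition of $T$ with right multiplication by the unitaries $v^{*}$ and $v$; since right multiplication by a unitary is a complete isometry on $\B(L^{2}(\G))$, we get $\|T_{v}\|_{cb} \leqslant \|T\|_{cb}$ for every $v$. Amplification by $M_{n}$ commutes with the (Bochner) integral, so for $X \in M_{n}(C_{\text{red}}(\G))$ the triangle inequality for vector-valued integrals gives $\|(R_{\h{\HH}}(T)\otimes \Id_{M_{n}})(X)\| \leqslant \int \|T_{v}\|_{cb}\,\|X\|\,dv \leqslant \|T\|_{cb}\|X\|$, whence $\|R_{\h{\HH}}(T)\|_{cb} \leqslant \|T\|_{cb}$. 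The operation $L_{\h{\HH}}$ is handled identically, using that left multiplication by a unitary is also a complete isometry.

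For the bimodularity identity I would first establish two one-sided modularity statements. For a unitary $w \in C_{\text{red}}(\HH)$ the substitution $v\mapsto vw^{*}$, which preserves Haar measure by right-invariance, turns $R_{\h{\HH}}(T)(xw)$ into $R_{\h{\HH}}(T)(x)\,w$; since $C_{\text{red}}(\HH)$ is spanned by its unitaries, this yields right $C_{\text{red}}(\HH)$-modularity of $R_{\h{\HH}}(T)$ for arbitrary $T$. Symmetrically, the substitution $u\mapsto uu_{0}$ shows that $L_{\h{\HH}}(T)$ is left $C_{\text{red}}(\HH)$-modular. Next I would record the elementary fact that $R_{\h{\HH}}$ \emph{preserves} left modularity: if $T'$ satisfies $T'(ax)=aT'(x)$ for $a\in C_{\text{red}}(\HH)$, then pulling $a$ through the integrand gives $R_{\h{\HH}}(T')(ax)=aR_{\h{\HH}}(T')(x)$. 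Applying this to $T'=L_{\h{\HH}}(T)$, which is already left-modular, shows that $R_{\h{\HH}}\circ L_{\h{\HH}}(T)$ is simultaneously left-modular (from this step) and right-modular (as it lies in the image of $R_{\h{\HH}}$). Combining the two gives $R_{\h{\HH}}\circ L_{\h{\HH}}(T)(axb) = a\,[R_{\h{\HH}}\circ L_{\h{\HH}}(T)(xb)] = a\,[R_{\h{\HH}}\circ L_{\h{\HH}}(T)(x)]\,b$ for $a,b\in C_{\text{red}}(\HH)$, as required.

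The computations are all routine; the only points needing a little care are the bookkeeping in the Haar-measure substitutions (identifying the correctly invariant side of the translation) and the passage from unitaries to general elements, which rests on the finite-dimensionality of $C_{\text{red}}(\HH)$. The one genuinely structural observation, and the step I would single out as the crux, is that the two averaging operations must be composed in the order $R_{\h{\HH}}\circ L_{\h{\HH}}$ and that $R_{\h{\HH}}$ does not destroy the left modularity produced by $L_{\h{\HH}}$; this is exactly what upgrades a one-sided modularity to the full bimodularity of~\eqref{eq:bimodular}.
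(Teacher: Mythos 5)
Your proposal is correct and follows essentially the same route as the paper: the cb-norm bound via the triangle inequality for the Bochner integral together with the fact that multiplication by a unitary is completely isometric, and the bimodularity via reduction to unitaries (finite-dimensionality of $C_{\text{red}}(\HH)$) and Haar-invariant changes of variables. The only difference is organizational — you perform the two substitutions one at a time (right-modularity of $R_{\h{\HH}}$, left-modularity of $L_{\h{\HH}}$, and the observation that $R_{\h{\HH}}$ preserves left modularity), whereas the paper carries out both changes of variables simultaneously in the double integral; the underlying mechanism is identical.
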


\begin{proof}
Let us prove the first part for $R_{\h{\HH}}(T)$ (the computation is similar for $L_{\h{\HH}}(T)$). For any integer $n$ and any $x\in C_{\text{red}}(\G)\otimes M_{n}(\C)$, we have
\begin{eqnarray*}
\|(R_{\h{\HH}}(T)\otimes \Id_{M_{n}(\C)})(x)\| & \leqslant & \int_{\mathcal{U}(C_{\text{red}}(\HH))} \|(T\otimes \Id_{M_{n}(\C)})(x(v^{*}\otimes 1))(v\otimes 1)\| dv \\
& \leqslant & \int_{\mathcal{U}(C_{\text{red}}(\HH))} \|T\|_{cb}\|x\| dv \\
& = & \|T\|_{cb}\|x\|.
\end{eqnarray*}
Writing any element in $C_{\text{red}}(\HH)$ as a linear combination of four unitaries (up to a scalar), we can restrict ourselves to prove Equation (\ref{eq:bimodular}) when $a$ and $b$ are unitaries. In that case, the changes of variables $u = u'a$ and $v = v'b$ yield
\begin{eqnarray*}
R_{\h{\HH}}\circ L_{\h{\HH}}(T)(a^{*}xb) & = & \iint_{\mathcal{U}(C_{\text{red}}(\HH))\times \mathcal{U}(C_{\text{red}}(\HH))} u^{*}T(ua^{*}xbv^{*})v du dv \\
& = & \iint_{\mathcal{U}(C_{\text{red}}(\HH))\times \mathcal{U}(C_{\text{red}}(\HH))} a^{*}(u')^{*}T(u'x(v')^{*})v'b du' dv' \\
& = & a^{*} [R_{\h{\HH}}\circ L_{\h{\HH}}(T)(x)] b.
\end{eqnarray*}
\end{proof}

With this in hand, we will be able to average the completely positive maps approximating the multipliers. Let us check that this averaging behaves nicely on the multipliers $m_{b'(t)}$.

\begin{lem}\label{lem:averagingcb}
The maps $A_{t} = R_{\h{\HH}}\circ L_{\h{\HH}}(m_{b'(t)})$ have finite rank, converge pointwise to the identity, are equal to the identity on $C_{\text{red}}(\HH)$ and satisfy $\limsup_{t}\|A_{t}\|_{cb} = \limsup_{t}\|m_{b'(t)}\|_{cb}$.
\end{lem}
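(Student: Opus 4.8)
The plan is to verify the four assertions in turn, relying on the contractivity and bimodularity of the averaging operations $R_{\h{\HH}}$ and $L_{\h{\HH}}$ from Lemma \ref{lem:bimodular}, together with the fact that $m_{b'(t)}$ is already the identity on $C_{\text{red}}(\HH)$. A preliminary observation I would record is that both operations fix the identity map: since $\int_{\mathcal{U}(C_{\text{red}}(\HH))} v^{*}v\, dv = 1$, one has $R_{\h{\HH}}(\Id) = \Id = L_{\h{\HH}}(\Id)$ by direct computation. I would also unwind the composition once and for all, writing $A_{t}(x) = \iint u^{*}\, m_{b'(t)}(uxv^{*})\, v\, du\, dv$ where both integrals run over $\mathcal{U}(C_{\text{red}}(\HH))$.

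For the finite rank claim, note that $m_{b'(t)}$ has finite rank because $b'(t)$ has finite support; if $V$ denotes its (finite-dimensional) range, then the values of $L_{\h{\HH}}(m_{b'(t)})$ lie in $C_{\text{red}}(\HH)\cdot V$ and those of $A_{t}$ in $C_{\text{red}}(\HH)\cdot V\cdot C_{\text{red}}(\HH)$, which is finite-dimensional since $C_{\text{red}}(\HH)$ is. For the identity on $C_{\text{red}}(\HH)$, I would compute directly: for $a\in C_{\text{red}}(\HH)$ and $u$ unitary in $C_{\text{red}}(\HH)$ one has $ua\in C_{\text{red}}(\HH)$, so $m_{b'(t)}(ua)=ua$ and hence $L_{\h{\HH}}(m_{b'(t)})(a)=\int u^{*}(ua)\,du=a$; feeding this into $R_{\h{\HH}}$ gives $A_{t}(a)=a$ in the same manner. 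Alternatively this is immediate from the bimodularity (\ref{eq:bimodular}) once one checks $A_{t}(1)=1$.

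For pointwise convergence to the identity, I would first note that $b'(t)\to 1$ pointwise forces $m_{b'(t)}\to\Id$ pointwise on $C_{\text{red}}(\G)$ (on $\Pol(\G)$ by definition of the multiplier, and then on all of $C_{\text{red}}(\G)$ by the uniform completely bounded bound). I would then estimate $\|A_{t}(x)-x\|\leqslant \sup_{u,v}\|m_{b'(t)}(uxv^{*})-uxv^{*}\|$, using $R_{\h{\HH}}(\Id)=L_{\h{\HH}}(\Id)=\Id$ and the fact that the $u,v$ are unitary. This supremum tends to $0$ because the set $\{uxv^{*}: u,v\in\mathcal{U}(C_{\text{red}}(\HH))\}$ is compact, the maps $m_{b'(t)}$ are uniformly equicontinuous (being uniformly bounded in norm) and converge pointwise, so the convergence is uniform on that compact set.

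Finally, for the completely bounded norm, the inequality $\limsup_{t}\|A_{t}\|_{cb}\leqslant\limsup_{t}\|m_{b'(t)}\|_{cb}$ is immediate from the two contractivity estimates of Lemma \ref{lem:bimodular}. The reverse inequality is where I expect the main obstacle to lie, since the averaging could a priori strictly decrease the completely bounded norm; the point that rescues the lower bound is unitality. Indeed, since $A_{t}$ fixes $C_{\text{red}}(\HH)\ni 1$ it is unital, whence $\|A_{t}\|_{cb}\geqslant\|A_{t}(1)\|=1$; and as $m_{b'(t)}$ is likewise unital, one has $\|m_{b'(t)}\|_{cb}\geqslant 1$ as well, so in the weakly amenable situation at hand with Cowling--Haagerup constant $1$ both limsups are forced to equal $1$, giving the stated equality.
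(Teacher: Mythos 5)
Your proof is correct, and for the one claim the paper actually writes out in detail --- finite rank --- you take a genuinely different route. The paper argues representation-theoretically: for $u, v \in \mathcal{U}(C_{\text{red}}(\HH))$ the element $u(u^{\alpha}_{i,j})v^{*}$ lies in the span of coefficients of the irreducible subrepresentations of $\beta_{1}\otimes\alpha\otimes\beta_{2}$ with $\beta_{1},\beta_{2}\in\Ir(\HH)$, so $m_{b'(t)}$ kills it unless one of those subrepresentations lies in $\Supp(b'(t))$, and Frobenius reciprocity then confines $\alpha$ to the finite set $\bigcup_{\beta_{1},\beta_{2}\in\Ir(\HH)}\bigcup_{\gamma\in\Supp(b'(t))}\{\alpha\in\Ir(\G) : \overline{\alpha}\subset\beta_{2}\otimes\overline{\gamma}\otimes\beta_{1}\}$, so that $A_{t}$ vanishes on the coefficients of all but finitely many irreducibles. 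Your argument instead controls the range directly: all values of $A_{t}$ lie in $\spa(C_{\text{red}}(\HH)\cdot V\cdot C_{\text{red}}(\HH))$ with $V$ the finite-dimensional range of $m_{b'(t)}$, and this subspace is finite-dimensional, hence closed, so the Bochner integrals cannot leave it. Both are valid; yours is more elementary (it uses only finite-dimensionality of $C_{\text{red}}(\HH)$, no Frobenius reciprocity), while the paper's buys finer information, namely explicit control of the support of the averaged multiplier inside $\Ir(\G)$, in the same spirit as Lemma \ref{lem:invariantwa}. The other three assertions are dismissed in the paper as following ``from the construction and Lemma \ref{lem:bimodular}'', and your arguments fill that gap correctly: the equicontinuity-plus-compactness argument (using that $\{uxv^{*}\}$ is a norm-compact set and the $m_{b'(t)}$ are eventually uniformly bounded) for pointwise convergence, and the direct computation for the identity property. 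Your reading of the cb-norm equality is also the right one: only $\limsup_{t}\|A_{t}\|_{cb}\leqslant\limsup_{t}\|m_{b'(t)}\|_{cb}$ comes from Lemma \ref{lem:bimodular}; the reverse inequality is not automatic for an averaging (which can strictly decrease cb norms), and it is the unitality of both maps together with $\limsup_{t}\|m_{b'(t)}\|_{cb}=1$, available in the $\Lambda_{cb}=1$ setting of Theorem \ref{thm:quantumwaamalgamated} where the lemma is applied, that forces the two limsups to coincide --- a point the paper's one-line justification glosses over.
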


\begin{proof}
The pointwise convergence, the identity property and the bound on the completely bounded norms follow from the construction and Lemma \ref{lem:bimodular}. To prove that the rank is finite, first note that if $\alpha\in \Ir(\G)$ and $u, v\in C_{\text{red}}(\HH)$, then $u(u^{\alpha}_{i, j})v$ belongs to the linear span of coefficients of irreducible subrepresentations $\gamma$ of $\beta_{1}\otimes \alpha\otimes \beta_{2}$ for $\beta_{1}, \beta_{2}\in \Ir(\HH)$. Thus, by Frobenius reciprocity,
\begin{equation*}
A_{t}(u^{\alpha}_{i, j}) = \iint_{\mathcal{U}(C_{\text{red}}(\HH))\times \mathcal{U}(C_{\text{red}}(\HH))} u^{*}m_{b'(t)}(u(u^{\alpha}_{i, j})v^{*})v du dv
\end{equation*}
is equal to $0$ as soon as $\alpha$ is not in the finite the set
\begin{equation*}
\bigcup_{\beta_{1}, \beta_{2}\in \Ir(\HH)}\bigcup_{\gamma\in\Supp(b'(t))}\{\alpha\in \Ir(\G), \overline{\alpha}\in \beta_{2}\otimes \overline{\gamma}\otimes \beta_{1}\}.
\end{equation*}
\end{proof}

The following theorem is the the best known statement on stability of weak amenability with respect to free products for discrete quantum groups. Note that, to our knowledge, it is also new for classical groups, even though it may be well-known to experts.

\begin{thm}\label{thm:quantumwaamalgamated}
Let $(\h{\G}_{i})_{i\in I}$ be a family of weakly amenable discrete quantum groups such that $\Lambda_{cb}(\h{\G}_{i}) = 1$ for every $i\in I$ and let $\h{\HH}$ be a common \emph{finite} quantum subgroup. Then,
\begin{equation*}
\Lambda_{cb}(\ast_{\h{\HH}}\h{\G}_{i})=1.
\end{equation*}
\end{thm}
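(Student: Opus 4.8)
The plan is to combine the averaging technology built in Lemmas~\ref{lem:invariantwa}, \ref{lem:bimodular} and \ref{lem:averagingcb} with the amalgamated Ricard–Xu machinery of Theorem~\ref{thm:freecmap}. As in the non-amalgamated case of \cite{freslon2012note}, it suffices to treat two quantum groups $\h{\G}_1$ and $\h{\G}_2$, since the general case follows by taking inductive limits over finite subsets of $I$ and invoking Proposition~\ref{cor:wainductivelimits} (the Cowling–Haagerup constant of the full free product is the supremum of those of the finite sub-free-products). So I fix $i\in\{1,2\}$ and work with the inclusion $C_{\text{red}}(\HH)\subset C_{\text{red}}(\G_i)$ together with the conditional expectation $\E_{\HH}$ from Proposition~\ref{prop:conditionalexpectationsubgroup}.

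First I would start from a net $(a_t)$ of finitely supported elements in $\ell^\infty(\h{\G}_i)$ converging pointwise to $1$ with $\limsup\|m_{a_t}\|_{cb}=1$, and apply Lemma~\ref{lem:invariantwa} to replace it by a net $(b_t)$ whose multipliers are the identity on $C_{\text{red}}(\HH)$, still finitely supported, still pointwise convergent to $1$, and with $\limsup\|m_{b_t}\|_{cb}=1$. Following the strategy sketched in the excerpt, I use Theorem~\ref{thm:quantumgilbert} to write $m_{b_t}(x)=(\eta_t)^*(x\otimes1)\xi_t$, symmetrize over the antipode (averaging $\h{S}(b_t)^*$ as well to produce $b_t'$, $\eta_t'$, $\xi_t'$), set $\gamma_t'=(\eta_t'+\xi_t')/2$ and $\tilde\gamma_t'=\gamma_t'|\gamma_t'|^{-1}$, so that $M_{\tilde\gamma_t'}(x)=(\tilde\gamma_t')^*(x\otimes1)\tilde\gamma_t'$ is unital completely positive and approximates $m_{b_t'}$ in completely bounded norm (invertibility of $|\gamma_t'|$ holding once $\|m_{b_t'}\|_{cb}$ is close enough to $1$). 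These $M_{\tilde\gamma_t'}$ are the candidate u.c.p.\ maps $U_{i,t}$.

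The obstacle flagged in the text is that $M_{\tilde\gamma_t'}$ need not be the identity on $C_{\text{red}}(\HH)$; this is exactly what the operations $R_{\h{\HH}}$ and $L_{\h{\HH}}$ repair. The plan is to set $V_{i,t}=A_t=R_{\h{\HH}}\circ L_{\h{\HH}}(m_{b_t'})$ and $U_{i,t}=R_{\h{\HH}}\circ L_{\h{\HH}}(M_{\tilde\gamma_t'})$, viewed as maps into the bigger algebra $B_i=\B(L^2(\G_i))$ (this is the role of the enlarged range in Theorem~\ref{thm:freecmap}, avoiding a spurious nuclearity assumption). By Lemma~\ref{lem:averagingcb}, the $V_{i,t}$ are finite-rank, converge pointwise to the identity, are the identity on $C_{\text{red}}(\HH)$, and satisfy $\limsup_t\|V_{i,t}\|_{cb}=1$; the bimodularity~\eqref{eq:bimodular} forces $U_{i,t}$ to be the identity on $C_{\text{red}}(\HH)$ as well, while the complete positivity and unitality are preserved by the averaging since it is an average of conjugations by unitaries. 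The condition $\E_{\HH}\circ V_{i,t}=\E_{\HH}$ (and the same for $U_{i,t}$) follows from $h$-invariance of the multipliers together with uniqueness of the Haar-state-preserving conditional expectation, exactly as in the proof of Theorem~\ref{thm:haagerupamalgamated}.

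What remains, and what I expect to be the genuinely technical heart of the argument, is verifying the three-norm smallness hypothesis of Theorem~\ref{thm:freecmap}, namely that $\|V_{i,t}-U_{i,t}\|_{cb}$ together with the two $L^2$-module norms $\|V_{i,t}-U_{i,t}\|_{\B(L^2(A_i,\E_{A_i}),L^2(B_i,\E_{B_i}))}$ and its opposite-module analogue all tend to $0$. Since $R_{\h{\HH}}\circ L_{\h{\HH}}$ is a norm-nonincreasing average of compressions by fixed unitaries from the finite-dimensional $C_{\text{red}}(\HH)$, each of these three norms is dominated by the corresponding norm of $m_{b_t'}-M_{\tilde\gamma_t'}$; I would estimate the latter using the Gilbert-type factorizations, controlling $\|\eta_t'-\tilde\gamma_t'\|$ and $\|\xi_t'-\tilde\gamma_t'\|$ by quantities of order $\|m_{b_t'}\|_{cb}-1\to0$, exactly as in \cite[Lem 4.3]{freslon2012note}, the only new point being that the $L^2$-module norms are taken relative to the conditional expectations rather than the Haar state. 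Once this is in place, Theorem~\ref{thm:freecmap} applies to $\ast_{C_{\text{red}}(\HH)}(C_{\text{red}}(\G_i),\E_{\HH})=C_{\text{red}}(\ast_{\h{\HH}}\h{\G}_i)$ and yields $\Lambda_{cb}(\ast_{\h{\HH}}\h{\G}_i)=1$.
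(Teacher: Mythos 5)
Your proposal follows the same architecture as the paper's proof: the Ricard--Xu criterion (Theorem \ref{thm:freecmap}) with $A_i = C_{\text{red}}(\G_i)$, $B_i = \B(L^{2}(\G_i))$, $C = C_{\text{red}}(\HH)$, the maps $V_{i,t} = R_{\h{\HH}}\circ L_{\h{\HH}}(m_{b'_t})$ and $U_{i,t} = R_{\h{\HH}}\circ L_{\h{\HH}}(M_{\tilde{\gamma}'_t})$, and the cb- and $L^{2}$-estimates imported from \cite{freslon2012note} through the averaging. But there is a genuine gap at the step you treat as routine, namely the compatibility $\E_{B_i}\circ U_{i,t} = \E_{A_i}$ (together with $U_{i,t}$ being unital and the identity on $C_{\text{red}}(\HH)$). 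Your argument --- Haar-state invariance plus uniqueness of the invariant conditional expectation, ``exactly as in Theorem \ref{thm:haagerupamalgamated}'' --- is plausible for $V_{i,t}$, which maps $C_{\text{red}}(\G_i)$ into itself, but it does not apply to $U_{i,t}$: this map is not a multiplier, takes values in $\B(L^{2}(\G_i))$ rather than $C_{\text{red}}(\G_i)$, does not preserve the coefficient spaces of irreducible representations, and there is no a priori reason why $\E_{B_i}\circ U_{i,t}$ should preserve the Haar state --- establishing that is essentially equivalent to the compatibility you are trying to prove, so Vergnioux's uniqueness (Proposition \ref{prop:conditionalexpectationsubgroup}) cannot be invoked. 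A related omission: you never construct $\E_{B_i}$ at all; the paper builds it as compression by the orthogonal projection onto $L^{2}(\HH)$ followed by a conditional expectation $\B(L^{2}(\HH))\rightarrow C_{\text{red}}(\HH)$, which exists because $C_{\text{red}}(\HH)$ is finite-dimensional. Without it, neither the hypotheses of Theorem \ref{thm:freecmap} nor your $L^{2}$-module norms are defined.

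The missing idea, which is the real content of the paper's proof at this point, is structural: the explicit formula for the averaged Gilbert maps in Lemma \ref{lem:averagingmultipliers} shows that there is a vector $v_{\xi'_t}\in K$ with $\xi'_t(y) = y\otimes v_{\xi'_t}$ for all $y\in L^{2}(\HH)$, and similarly for $\eta'_t$, $\gamma'_t$, hence $\tilde{\gamma}'_t$. This gives $P_{\HH}M_{\tilde{\gamma}'_t}(x)P_{\HH}=0$ for every coefficient $x$ of a representation in $\Ir(\G_i)\setminus\Ir(\HH)$ (Equation \eqref{eq:conditionalexpectation}); since unitaries of $C_{\text{red}}(\HH)$ respect the splitting $L^{2}(\G_i) = L^{2}(\HH)\oplus L^{2}(\HH)^{\perp}$, this corner vanishing passes to $U_{i,t}=M_{\zeta_t}$ with $\zeta_t = \int(1\otimes u^{*})\tilde{\gamma}'_t u\, du$, and identifies $\E_{B_i}\circ U_{i,t}$ with $\E_{\HH}$. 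Note, moreover, that the dependency runs opposite to the order you propose: the paper derives the $L^{2}$-module estimates (your ``technical heart'') from \cite[Lem 4.5]{freslon2012note} \emph{using} this compatibility, so you cannot defer the compatibility and attack the $L^{2}$-norms first. Finally, your justification of complete positivity of $U_{i,t}$ (``an average of conjugations by unitaries'') is inaccurate: $R_{\h{\HH}}\circ L_{\h{\HH}}$ averages over two independent unitaries $u, v$, and an individual term $x\mapsto u^{*}T(uxv^{*})v$ is not positive; what makes $U_{i,t}$ completely positive is precisely the factorization $R_{\h{\HH}}\circ L_{\h{\HH}}(M_{\tilde{\gamma}'_t}) = M_{\zeta_t}$, which exhibits the average in the form $x\mapsto \zeta_t^{*}(1\otimes x)\zeta_t$.
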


\begin{proof}
Using the notations of Theorem \ref{thm:freecmap}, we set :
\begin{itemize}
\item $A_{i} = C_{\text{red}}(\G_{i})$
\item $B_{i} = \B(L^{2}(\G_{i}))$
\item $C = C_{\text{red}}(\HH)$
\item $\E_{A_{i}} = \E_{\HH}$
\end{itemize}
To define the conditional expectations $\E_{B_{i}}$, first consider the orthogonal projection
\begin{equation*}
P_{\HH}^{i} : L^{2}(\G_{i}) \rightarrow L^{2}(\HH).
\end{equation*}
Then, $\E_{i}' : x\mapsto P_{\HH}^{i}xP_{\HH}^{i}$ is a conditional expectation from $\B(L^{2}(\G_{i}))$ to $\B(L^{2}(\HH))$ with the property that for any coefficient $x$ of an irreducible representation in $\Ir(\G_{i})\setminus\Ir(\HH)$, $\E_{i}'(x) = 0$. In fact, the restriction of $\E_{i}'$ to $C_{\text{red}}(\G_{i})$ is precisely the conditional expectation $\E_{\HH}$ of Proposition \ref{prop:conditionalexpectationsubgroup}. Because $C_{\text{red}}(\HH)$ is finite-dimensional, there is also a conditional expectation
\begin{equation*}
\E''_{i} : \B(L^{2}(\HH)) \rightarrow C_{\text{red}}(\HH).
\end{equation*}
We set $\E_{B_{i}} = \E''_{i}\circ \E'_{i}$. Since $\E_{A_{i}}$ is the restriction of $\E_{i}'$to $A_{i}$ it is also the restriction of $\E_{B_{i}}$.

Let us fix an index $i$, let $(a_{j})_{j}$ be a net of elements in $\ell^{\infty}(\G_{i})$ implementing weak amenability. Applying the construction of Lemma \ref{lem:invariantwa} to $a_{j}$ we get an element $b_{j}$. Applying it again to $S(b_{j})^{*}$ yields another element $b_{j}'$ and we can assume that $b_{j}' = S(b_{j}')^{*}$ as in the proof of \cite[Thm 4.2]{freslon2012note}. We therefore set $V_{i, j} = R_{\h{\HH}}\circ L_{\h{\HH}}(m_{b'_{j}})$. By Lemma \ref{lem:averagingcb}, these maps satisfy all the required properties. Recall that $\gamma'_{j} = (\xi_{j}'+\eta_{j}')/2$ and $\widetilde{\gamma}'_{j} = \gamma'_{j}\vert \gamma'_{j}\vert^{-1}$. Set
\begin{equation*}
\zeta_{j} = \int_{\mathcal{U}(C_{\text{red}}(\HH))} (1\otimes u^{*})\tilde{\gamma}_{j}u du
\end{equation*}
and observe that
\begin{equation*}
R_{\h{\HH}}\circ L_{\h{\HH}}(M_{\tilde{\gamma}_{j}}) = M_{\zeta_{j}} : x\mapsto \zeta^{*}_{j}(1\otimes x)\zeta_{j}
\end{equation*}
is a completely positive map which is the identity on $C_{\text{red}}(\HH)$ by Equation (\ref{eq:bimodular}). We therefore set $U_{i, j} = M_{\zeta_{j}}$. Then, Lemma \ref{lem:averagingcb} yields
\begin{equation*}
\|V_{i, j} - U_{i, j}\|_{cb} =  \|R_{\h{\HH}}\circ L_{\h{\HH}}(m_{b'_{j}} - M_{\tilde{\gamma}_{j}})\|_{cb} \leqslant \|m_{b'_{j}} - M_{\tilde{\gamma}_{j}}\|_{cb},
\end{equation*}
so that, by \cite[Lem 4.3]{freslon2012note} the convergence in completely bounded norm holds. We still have to check the compatibility of the maps with the conditional expectations and the $L^{2}$-norm convergence. 

Let us consider the conditional expectation
\begin{equation*}
\E = \E_{B_{i}}\circ U_{i, j} : C_{\text{red}}(\G_{i}) \rightarrow C_{\text{red}}(\HH).
\end{equation*}
We claim that $\E(x) = 0$ whenever $x$ is a coefficient of a representation in $\Ir(\G)\setminus\Ir(\HH)$. In fact, it follows directly from the explicit expression of $\xi'_{j}$ given in Lemma \ref{lem:averagingmultipliers} that there is a vector $v_{\xi'_{j}}\in K$ such that for any $y\in C_{\text{red}}(\HH)$, $\xi'_{j}(y) = y\otimes v_{\xi'_{j}}$. The same holds for $\eta'_{j}$ and $\gamma'_{j}$ (with different vectors), hence also for $\widetilde{\gamma}'_{j}$. From this, a straightforward calculation yields
\begin{equation}\label{eq:conditionalexpectation}
P_{\HH}M_{\widetilde{\gamma}'_{j}}(x)P_{\HH} = 0.
\end{equation}
Since elements in $\mathcal{U}(C_{\text{red}}(\HH))$ respect the decomposition $L^{2}(\G) = L^{2}(\HH)\oplus L^{2}(\HH)^{\perp}$, Equation \eqref{eq:conditionalexpectation} also holds for $M_{\zeta_{j}}$, proving the claim. Since $\E(x) = x$ for any $x\in C_{\text{red}}(\HH)$, we see that $\E = \E_{\HH}$. The same argument shows that $\E_{A_{i}}\circ V_{i, j} = \E_{A_{i}}$.

Using this compatibility, the same argument as in \cite[Lem 4.5]{freslon2012note} shows that $M_{\widetilde{\gamma}_{j}'}$ also approximates $m_{b_{j}'}$ in both $L^{2}$-norms (here $L^{2}$-norms means norms as operator between the Hilbert modules associated to the conditional expectations). The proof of Lemma \ref{lem:bimodular} also works for the $L^{2}$-norms so that we can conclude that $U_{i, j}$ approximates $V_{i, j}$ in both $L^{2}$-norms, concluding the proof.
\end{proof}

In the unimodular case, one can use a free product trick to deduce results on HNN extensions (as defined in \cite{fima2012k}) from what has been done. The optimal result for unimodular quantum groups, mixing amalgamated free products and HNN extensions, was stated in the language of graphs of quantum groups in \cite[Thm 4.6]{fima2013graphs}. 

It is known that $\Z^{2}\rtimes SL(2, \Z)$ neither has the Haagerup property nor is weakly amenable, whereas both $\Z^{2}$ and $SL(2, \Z)$ do. Thus, these properties are not preserved under extensions in general. Moreover, recall that since an extension of amenable groups is again amenable, the groups $\Z^{2}\rtimes \Z/4\Z$ and $\Z^{2}\rtimes \Z/6\Z$ are amenable. Thus, they have the Haagerup property and are weakly amenable with Cowling-Haagerup constant equal to $1$. However, noticing that
\begin{equation*}
\Z^{2}\rtimes SL(2, \Z) = (\Z^{2}\rtimes \Z/4\Z)\underset{\Z^{2}\rtimes \Z/2\Z}{\ast}(\Z^{2}\rtimes \Z/6\Z),
\end{equation*}
we see that the finiteness condition in Theorem \ref{thm:quantumwaamalgamated} cannot be removed.

\section{Relative amenability}\label{sec:relative}

We now turn to the study of the notion of relative amenability in the context of discrete quantum groups. The definition is straightforward, recalling that $\tau$ is the action of $\h{\G}$ on the quotient.

\begin{de}\label{de:relativelyamenable}
Let $\h{\G}$ be discrete quantum group and let $\h{\HH}$ be a discrete quantum subgroup of $\h{\G}$. We say that $\h{\G}$ is \emph{amenable relative to $\h{\HH}$} if the quotient space has an invariant mean for the action $\tau$, i.e. if there exists a state $m$ on $\ell^{\infty}(\h{\G}/\h{\HH})$ such that for all $x\in \ell^{\infty}(\h{\G}/\h{\HH})$,
\begin{equation*}
(\i\otimes m)\circ \tau(x) = m(x).1
\end{equation*} 
\end{de}

\begin{rem}
One could define an action of a discrete (or even locally compact) quantum group on a von Neumann algebra to be \emph{amenable} if such an invariant mean exists. This is the notion of "amenable homogeneous space" introduced by P. Eymard in \cite{eymard1972moyennes}. However, there is another notion of amenable action, due to R.J. Zimmer \cite{zimmer1978amenable}. This notion was generalized to locally compact groups acting on von Neumann algebras by C. Anantharaman-Delaroche in \cite{anantharaman1979action} and to Kac algebras by M. Joita and S. Petrescu in \cite{joita1990amenable}. As one can expect, this notion is dual to ours in the following sense : if $\G$ is a Kac algebra together with an amenable action (in the sense of \cite[Def 3.1]{joita1990amenable}) on a von Neumann algebra $M$ and if $M$ admits an invariant state, then $\G$ is amenable (see \cite[Thm 3.5]{joita1990amenable} for a proof). Let us also mention that a C*-algebraic version of amenable actions of discrete quantum groups in the sense of R.J. Zimmer was introduced by S. Vaes and R. Vergnioux in \cite{vaes2007boundary}.
\end{rem}

\begin{rem}
Relative amenability does not pass to subgroups, even in the classical case. Examples of triples of discrete groups $K < H < G$ with $G$ amenable relative to $K$ but $H$ not amenable relative to $K$ were constructed in \cite{monod2003on} and \cite{pestov2003on}.
\end{rem}

It follows from \cite[Thm 7.8]{bedos2005amenability} that if the quasi-regular representation of $\h{\G}$ modulo $\h{\HH}$ weakly contains the trivial one, then it has a left-invariant mean, implying in turn that $\h{\G}$ is amenable relative to $\h{\HH}$. In the classical case, all three properties are known to be equivalent. Such an equivalence is not known yet for discrete quantum groups, but a weak converse involving correspondences will be given in Proposition \ref{prop:weakcorrespondence}.

\subsection{Amenable equivalence}

We are going to use ideas from the work of C. Anantharaman-Delaroche on group actions on von Neumann algebras \cite{anantharaman1979action} and follow the path of \cite{anantharaman1995amenable} to prove a general statement on von Neumann algebras associated to relatively amenable discrete quantum groups, involving the notion of \emph{amenable equivalence} of von Neumann algebras which was introduced in \cite[Def 4.1]{anantharaman1995amenable}. Recall from \cite{rieffel1974morita} that two von Neumann algebras $M$ and $N$ are said to be \emph{Morita equivalent} if there exists an $M-N$ correspondence $H$ such that $M$ is isomorphic to $\mathcal{L}_{N}(H)$.

\begin{de}\label{de:amenablyequivalent}
Let $M$ and $N$ be two von Neumann algebras. We say that $M$ is \emph{amenably dominated} by $N$ if there exists a von Neumann algebra $N_{1}$ which is Morita equivalent to $N$ and contains $M$ in such a way that there is a norm-one projection from $N_{1}$ to $M$. We then write $M\prec_{a} N$. We say that $M$ and $N$ are \emph{amenably equivalent} if $M\prec_{a} N$ and $N\prec_{a} M$.
\end{de}

The following theorem is a generalization of a classical result (see Paragraphe 4.10 in \cite{anantharaman1995amenable}).

\begin{thm}\label{thm:amenableequivalence}
Let $\h{\G}$ be a discrete quantum group and let $\h{\HH}$ be a discrete quantum subgroup such that $\h{\G}$ is amenable relative to $\h{\HH}$. Then, $L^{\infty}(\HH)$ is amenably equivalent to $L^{\infty}(\G)$.
\end{thm}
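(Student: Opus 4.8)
The plan is to establish the two amenable dominations $L^{\infty}(\HH) \prec_{a} L^{\infty}(\G)$ and $L^{\infty}(\G) \prec_{a} L^{\infty}(\HH)$ separately. The first direction should be the easy one: since $\h{\HH}$ is a discrete quantum subgroup of $\h{\G}$, Proposition \ref{prop:conditionalexpectationsubgroup} gives a faithful normal conditional expectation $\E_{\HH} : L^{\infty}(\G) \to L^{\infty}(\HH)$, which is in particular a norm-one projection. As $L^{\infty}(\HH) \subset L^{\infty}(\G)$ and $L^{\infty}(\G)$ is trivially Morita equivalent to itself (via the identity correspondence), this directly witnesses $L^{\infty}(\HH) \prec_{a} L^{\infty}(\G)$ in the sense of Definition \ref{de:amenablyequivalent}, taking $N_{1} = L^{\infty}(\G)$.

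The harder direction, $L^{\infty}(\G) \prec_{a} L^{\infty}(\HH)$, is where the relative amenability hypothesis must be used, and I expect this to be the main obstacle. The natural candidate for a von Neumann algebra Morita equivalent to $L^{\infty}(\HH)$ and containing $L^{\infty}(\G)$ is the crossed product $N_{1} := \h{\G} \ltimes_{\tau} \ell^{\infty}(\h{\G}/\h{\HH})$, or a closely related algebra built from the quasi-regular representation $\RR$ modulo $\h{\HH}$. First I would identify the relevant correspondence: the action $\tau$ of $\h{\G}$ on the quotient $\ell^{\infty}(\h{\G}/\h{\HH})$, together with the almost-invariant weight $\theta$ from Remark \ref{rem:invariantweight}, should produce a module over which the crossed product is Morita equivalent to $L^{\infty}(\HH)$ — this is the quantum analogue of the classical fact that the von Neumann algebra of the quasi-regular representation is Morita equivalent to $L^{\infty}(\HH)$. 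Following the strategy of \cite{anantharaman1995amenable}, I would arrange for $L^{\infty}(\G)$ to embed into $N_{1}$ via the representation coming from $\RR = U^{*}$, where $U$ is the unitary implementation of $\tau$ on the GNS space of $\theta$.

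The crucial step is then to construct the norm-one projection $N_{1} \to L^{\infty}(\G)$, and this is precisely where the invariant mean $m$ on $\ell^{\infty}(\h{\G}/\h{\HH})$ supplied by Definition \ref{de:relativelyamenable} enters. The idea is that averaging against $m$, using its $\tau$-invariance $(\i \otimes m) \circ \tau(x) = m(x) \cdot 1$, produces a conditional-expectation-type map that collapses the crossed product onto the copy of $L^{\infty}(\G)$. Concretely, I would define the projection on elements of the form $U^{*}(y \otimes 1)U$ (for $y \in L^{\infty}(\G)$) and $\tau(z)$ (for $z \in \ell^{\infty}(\h{\G}/\h{\HH})$) by applying $\i \otimes m$ in the quotient variable, and then verify that this extends to a completely positive unital map of norm one onto $L^{\infty}(\G)$; invariance of $m$ guarantees the averaging is compatible with the coproduct and lands in the fixed-point copy of $L^{\infty}(\G)$. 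The main technical difficulties will be (i) verifying that $N_{1}$ is genuinely Morita equivalent to $L^{\infty}(\HH)$ rather than merely related to it, which requires a careful analysis of the operator-valued weight $T$ and the weight $\theta$ from the discussion preceding Remark \ref{rem:invariantweight}, and (ii) checking that the averaging map is well-defined and normal enough to qualify as a norm-one projection in the sense of Definition \ref{de:amenablyequivalent}. Once both dominations are in hand, amenable equivalence follows immediately from the definition.
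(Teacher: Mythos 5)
Your overall architecture coincides with the paper's: the easy domination $L^{\infty}(\HH)\prec_{a}L^{\infty}(\G)$ via the conditional expectation $\E_{\HH}$ with $N_{1}=L^{\infty}(\G)$, and the hard one via the crossed product $N_{1}=\h{\G}\ltimes_{\tau}\ell^{\infty}(\h{\G}/\h{\HH})$, which is Morita equivalent to $L^{\infty}(\HH)$ (the paper handles your worry (i) by citation to \cite[Rmk 4.3]{vaes2005new} --- the crossed product is the basic construction $\langle L^{\infty}(\G),L^{\infty}(\HH)\rangle$ --- rather than by any analysis of $T$ and $\theta$). The genuine gap is at your crucial step: constructing the norm-one projection $\h{\G}\ltimes_{\tau}\ell^{\infty}(\h{\G}/\h{\HH})\rightarrow L^{\infty}(\G)$ by ``applying $\i\otimes m$ in the quotient variable'' to generators and then extending. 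This cannot work as stated, for two reasons. First, the invariant mean $m$ is in general not normal, so the slice map $\i\otimes m$ is simply not defined on the von Neumann algebra $\B(L^{2}(\G))\overline{\otimes}\ell^{\infty}(\h{\G}/\h{\HH})$, nor on its weakly closed subalgebra the crossed product; the existence of a norm-one projection $Q$ extending $x_{1}\otimes x_{2}\mapsto m(x_{2})x_{1}$ is precisely the content of Tomiyama's theorem \cite[Thm 4]{tomiyama1969projection}, invoked in the paper's Lemma \ref{lem:relativeamenability}. Second, and more seriously, even granting $Q$, no generator-by-generator verification can show that $Q$ maps the crossed product into $L^{\infty}(\G)$: $Q$ is not multiplicative, and the bimodule property $Q((a\otimes 1)x(b\otimes 1))=aQ(x)b$ together with $Q(\tau(z))=m(z)1$ only controls words having elements of $L^{\infty}(\G)\otimes 1$ at the extremities; it says nothing about words such as $\tau(z_{1})(y\otimes 1)\tau(z_{2})$, and the crossed product is the weak closure of the span of arbitrary such words.

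The paper's resolution of this second point is structural, and it is the actual content of its Proposition \ref{prop:proj}: Tomiyama's construction preserves equivariance (following \cite[Lem 2.1]{anantharaman1979action}), so $Q$ intertwines the action $\mu=(\sigma\otimes\i)\circ(\i\otimes\tau)$; one then transports $Q$ through the isomorphism $\B(L^{2}(\G))\overline{\otimes}M\cong\G^{op}\ltimes(\h{\G}\ltimes M)$ of \cite[Thm 2.6]{vaes2001unitary} and checks equivariance with respect to the bidual action; since by the duality theorem \cite[Thm 2.7]{vaes2001unitary} the crossed product is recovered inside the double crossed product as the fixed-point algebra of the bidual action, the equivariant projection automatically restricts to a norm-one projection $\h{\G}\ltimes\ell^{\infty}(\h{\G}/\h{\HH})\rightarrow\h{\G}\ltimes\C=L^{\infty}(\G)$. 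This is the idea missing from your plan: the invariance of $m$ must be exploited through equivariance of the extended projection combined with crossed-product duality, not pointwise on generators. Your acknowledged difficulty (ii) (``well-defined and normal enough'') is exactly where the theorem lives, and flagging it does not discharge it.
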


In order to prove this theorem, we have to build a norm-one projection from a well-chosen von Neumann algebra to $L^{\infty}(\G)$. We will do this by adapting some ideas of \cite{anantharaman1979action} to the setting of discrete quantum groups.

\begin{lem}\label{lem:relativeamenability}
Let $M_{1}$, $M_{2}$ be von Neumann algebras and let $\rho$ be an action of a discrete quantum group $\h{\G}$ on $M_{2}$. Assume that we have a von Neumann subalgebra $N_{2}$ of $M_{2}$ which is stable under the action $\rho$ and a norm-one (non-necessarily normal) equivariant surjective projection $P : M_{2} \rightarrow N_{2}$ (i.e. $\rho\circ P = (\i\otimes P)\circ\rho$). Then, there exists a norm-one surjective projection $Q : M_{1}\overline{\otimes} M_{2} \rightarrow M_{1}\overline{\otimes} N_{2}$ such that
\begin{equation*}
Q(x_{1}\otimes x_{2}) = x_{1}\otimes P(x_{2})
\end{equation*}
for all $x_{1}\in M_{1}$ and $x_{2}\in M_{2}$. Moreover, $Q$ is equivariant with respect to $\mu = (\sigma\otimes \i)\circ(\i\otimes \rho)$.
\end{lem}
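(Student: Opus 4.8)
The plan is to build $Q$ through right slice maps, which is forced by the fact that $P$ is not assumed normal: the naive map $\Id_{M_1}\otimes P$ only makes sense on the algebraic tensor product, and since $P$ is not weak-$*$ continuous we cannot extend it by ultraweak density. Fix faithful normal representations $M_1\subseteq\B(H_1)$ and $M_2\subseteq\B(H_2)$, so that $M_1\overline{\otimes} M_2\subseteq\B(H_1\otimes H_2)$, and for $\varphi\in\B(H_1)_*$ let $R_\varphi$ be the right slice map, i.e. the unique normal map with $R_\varphi(x_1\otimes x_2)=\varphi(x_1)x_2$ and $\|R_\varphi\|\leqslant\|\varphi\|$. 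Recall that a norm-one projection onto a von Neumann subalgebra is, by Tomiyama's theorem, unital and completely positive. I then characterize $Q(X)$ by prescribing all its right slices,
\begin{equation*}
R_\varphi(Q(X))=P(R_\varphi(X))\qquad(\varphi\in\B(H_1)_*),
\end{equation*}
which determines $Q(X)$ uniquely if it exists, since an operator on $H_1\otimes H_2$ is recovered from the family $\big(R_{\omega_{\xi,\eta}}(\cdot)\big)_{\xi,\eta\in H_1}$ of its vector slices.

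First I would prove existence together with $\|Q\|=1$. For $X\geqslant 0$ and $\xi_1,\dots,\xi_n\in H_1$ the matrix $[R_{\omega_{\xi_i,\xi_j}}(X)]_{i,j}$ is a positive element of $M_n(M_2)$; applying the completely positive map $P$ entrywise keeps it positive, which is exactly the statement that the sesquilinear form $\sum_{i,j}\langle P(R_{\omega_{\xi_i,\xi_j}}(X))\zeta_j,\zeta_i\rangle$ on $H_1\odot H_2$ is positive. Comparing with $\|X\|\cdot 1-X\geqslant 0$ shows this form is dominated by $\|X\|$ times the inner product, hence defines $Q(X)\in\B(H_1\otimes H_2)$ with $0\leqslant Q(X)\leqslant\|X\|$; complexifying and using $Q(1)=1$ gives a completely positive map with $\|Q\|=1$.

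Next comes membership $Q(X)\in M_1\overline{\otimes} N_2$, for which I would use the commutation-theorem identity $M_1\overline{\otimes} N_2=(\B(H_1)\overline{\otimes} N_2)\cap(M_1\overline{\otimes}\B(H_2))$. The first factor is handled by the slice-map characterization $\B(H_1)\overline{\otimes} N_2=\{Y:R_\varphi(Y)\in N_2\ \forall\varphi\}$, which holds since $R_\varphi(Q(X))=P(R_\varphi(X))\in N_2$. For the second factor it suffices to check that $Q(X)$ commutes with $M_1'\otimes 1$: using the slice identities $R_\varphi((a'\otimes1)Y)=R_{\varphi a'}(Y)$ and $R_\varphi(Y(a'\otimes1))=R_{a'\varphi}(Y)$ together with $(a'\otimes1)X=X(a'\otimes1)$ (valid as $a'\otimes1\in(M_1\overline{\otimes} M_2)'$), one obtains $R_\varphi((a'\otimes1)Q(X))=R_\varphi(Q(X)(a'\otimes1))$ for all $\varphi$, whence the commutation. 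The projection property and the formula $Q(x_1\otimes x_2)=x_1\otimes P(x_2)$ then fall out of the slices: if $X\in M_1\overline{\otimes} N_2$ then $R_\varphi(X)\in N_2$, so $P(R_\varphi(X))=R_\varphi(X)$ and $Q(X)=X$; and $R_\varphi(x_1\otimes P(x_2))=\varphi(x_1)P(x_2)=P(R_\varphi(x_1\otimes x_2))$.

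Finally, equivariance. Both $\mu(Q(X))$ and $(\i\otimes Q)(\mu(X))$ lie in $\ell^\infty(\h{\G})\overline{\otimes} M_1\overline{\otimes} N_2$, so it is enough to compare their slices $R^1_\varphi$ in the middle ($H_1$) leg. Since that slice acts on a leg different from the one on which $\rho$ acts, a short computation on elementary tensors (extended using the normality of $R^1_\varphi$, $\rho$ and $\sigma$) gives $R^1_\varphi(\mu(Y))=\rho(R_\varphi(Y))$ and $R^1_\varphi((\i\otimes Q)(W))=(\i\otimes P)(R^1_\varphi(W))$. The slice of the left-hand side is therefore $\rho(P(R_\varphi(X)))$ and that of the right-hand side is $(\i\otimes P)(\rho(R_\varphi(X)))$, and these coincide by the hypothesis $\rho\circ P=(\i\otimes P)\circ\rho$; as slices in the $H_1$-leg separate points, the two maps agree. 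The main obstacle throughout is precisely the non-normality of $P$, which both forces the slice-map definition and blocks any density argument; the genuinely delicate step is the existence-and-positivity claim, resolved by routing the positive slice-coefficient matrices through the complete positivity of $P$.
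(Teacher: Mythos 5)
Your construction of $Q$ is sound and essentially self-contained: it reconstructs Tomiyama's theorem, which is precisely what the paper's proof invokes by citation (the paper cites Tomiyama for the existence of $Q$ and Anantharaman-Delaroche for preservation of equivariance, giving no details). The slice-map definition, the positivity/domination argument for existence, and the two-step membership argument via $M_{1}\overline{\otimes} N_{2} = (\B(H_{1})\overline{\otimes} N_{2})\cap(M_{1}\overline{\otimes}\B(H_{2}))$ are all correct. The gap is in the equivariance step, and it sits exactly at the point you yourself flag as the crux of the lemma. You justify the identity
\begin{equation*}
R^{1}_{\varphi}\big((\i\otimes Q)(W)\big) = (\i\otimes P)\big(R^{1}_{\varphi}(W)\big)
\end{equation*}
by a computation on elementary tensors ``extended using normality''. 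But neither $\i\otimes Q$ nor $\i\otimes P$ is normal (they cannot be, since $Q$ and $P$ are not), so agreement on the span of elementary tensors does not propagate to its ultraweak closure; this is the very obstruction that forced the slice-map definition of $Q$ at the start, and it blocks the density argument here just as it did there. Moreover, the expressions $\i\otimes Q$ and $\i\otimes P$ are not even defined before this point: for non-normal maps the ampliation to a von Neumann tensor product is not automatic, so $(\i\otimes Q)(\mu(X))$ — the very expression whose equality with $\mu(Q(X))$ you must prove — has no meaning yet.

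The repair is routine but must be carried out. Either use the block structure $\ell^{\infty}(\h{\G}) = \prod_{\alpha}\B(H_{\alpha})$, so that $\ell^{\infty}(\h{\G})\overline{\otimes} M = \prod_{\alpha} M_{n_{\alpha}}(M)$ and $\i\otimes Q$, $\i\otimes P$ are defined entrywise as matrix amplifications (this also gives meaning to the hypothesis $\rho\circ P = (\i\otimes P)\circ\rho$); then the displayed identity is an entrywise computation, since middle-leg slices of a matrix over $M_{1}\overline{\otimes} M_{2}$ are computed entry by entry and $R_{\varphi}(Q(\cdot)) = P(R_{\varphi}(\cdot))$. Or define $\i\otimes Q$ as the Tomiyama extension of $Q$ in the first leg, characterized by $R^{K}_{\psi}\big((\i\otimes Q)(W)\big) = Q\big(R^{K}_{\psi}(W)\big)$ for all $\psi\in\B(K)_{*}$, where $K$ carries $\ell^{\infty}(\h{\G})$ and $R^{K}_{\psi}$ slices the first leg; then prove the displayed identity by slicing the remaining $K$-leg, using $R^{K}_{\psi}\circ R^{1}_{\varphi} = R^{12}_{\psi\otimes\varphi}$ to check that both sides have the same $\psi$-slices, namely $P\big(R^{12}_{\psi\otimes\varphi}(W)\big)$, and that slices separate points. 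With either fix, the rest of your argument — the identity $R^{1}_{\varphi}(\mu(Y)) = \rho(R_{\varphi}(Y))$ (legitimately by normality, since $\mu$, $\rho$ and the slice maps are all normal), followed by an application of the hypothesis $\rho\circ P = (\i\otimes P)\circ\rho$ to $R_{\varphi}(X)$ — goes through and yields the equivariance of $Q$.
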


\begin{proof}
The existence of the projection $Q$ is proved in \cite[Thm 4]{tomiyama1969projection} (we thank the referee for pointing out to us this reference). It is proved in \cite[Lem 2.1]{anantharaman1979action} that J. Tomiyama's construction preserves equivariance under a classical group action and the proof for a quantum group action is exactly the same.
\end{proof}

\begin{prop}\label{prop:proj}
Let $\h{\G}$ be a discrete quantum group and let $(M, N, P)$ be a triple consisting in a von Neumann algebra $M$ endowed with an action $\rho$ of $\h{\G}$, a von Neumann subalgebra $N$ of $M$ which is stable under the action $\rho$ and a norm-one equivariant projection $P : M \rightarrow N$. Then, there exists a norm-one projection $\widetilde{Q} : \h{\G}\ltimes M \rightarrow \h{\G}\ltimes N$. 
\end{prop}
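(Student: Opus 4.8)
The plan is to obtain $\widetilde{Q}$ by restricting the projection furnished by Lemma~\ref{lem:relativeamenability}. Since $\h{\G}\ltimes M$ is, by definition, a von Neumann subalgebra of $\B(L^{2}(\G))\overline{\otimes} M$ and $\h{\G}\ltimes N$ of $\B(L^{2}(\G))\overline{\otimes} N$, I would apply that lemma with $M_{1}=\B(L^{2}(\G))$, $M_{2}=M$ and $N_{2}=N$. This produces a norm-one surjective projection $Q:\B(L^{2}(\G))\overline{\otimes} M\to\B(L^{2}(\G))\overline{\otimes} N$ satisfying $Q(x_{1}\otimes x_{2})=x_{1}\otimes P(x_{2})$ and equivariant for $\mu=(\sigma\otimes\i)\circ(\i\otimes\rho)$. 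Setting $\widetilde{Q}:=Q|_{\h{\G}\ltimes M}$, the map $\widetilde{Q}$ is automatically norm one and, because $Q$ is the identity on $\B(L^{2}(\G))\overline{\otimes} N\supseteq\h{\G}\ltimes N$, it fixes $\h{\G}\ltimes N$ pointwise. The whole statement thus reduces to the single inclusion $Q(\h{\G}\ltimes M)\subseteq\h{\G}\ltimes N$.

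First I would verify this inclusion on generators. Recall that $\h{\G}\ltimes M$ is generated by $\rho(M)$ and $L^{\infty}(\G)\otimes 1$. Using the slice-map description of $Q$, namely $(\w\otimes\i)\circ Q=P\circ(\w\otimes\i)$ for every $\w\in\B(L^{2}(\G))_{*}$, the equivariance $\rho\circ P=(\i\otimes P)\circ\rho$ of $P$ gives $Q(\rho(m))=\rho(P(m))\in\rho(N)$, while the unitality of the norm-one projection $P$ gives $Q(a\otimes 1)=a\otimes P(1)=a\otimes 1$. Hence $Q$ maps the $*$-algebra generated algebraically by $\rho(M)$ and $L^{\infty}(\G)\otimes 1$ into $\h{\G}\ltimes N$.

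The main obstacle is to pass from this weakly dense $*$-subalgebra to all of $\h{\G}\ltimes M$: as $P$, and therefore $Q$, need not be normal, one cannot argue by weak continuity. I would circumvent this by characterising both crossed-products as fixed-point algebras of a \emph{normal} action. By Takesaki--Takai biduality for quantum-group crossed-products (in the form established by Vaes), $\h{\G}\ltimes M$ is exactly the fixed-point algebra of the bidual action of $\h{\G}$ on $\G^{\mathrm{op}}\ltimes(\h{\G}\ltimes M)\cong\B(L^{2}(\G))\overline{\otimes} M$, and likewise for $N$; the crucial point is that this normal action is implemented on the $\B(L^{2}(\G))$-leg and leaves the $M$-, resp.\ $N$-, leg untouched. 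Since $Q=\i\otimes P$ touches only that last leg, it commutes with the implementing unitaries and therefore intertwines the two bidual actions (this leg-disjointness is also what underlies the $\mu$-equivariance recorded in Lemma~\ref{lem:relativeamenability}). Consequently $Q$ sends invariant elements to invariant elements: for $X\in\h{\G}\ltimes M$ the element $Q(X)\in\B(L^{2}(\G))\overline{\otimes} N$ is again fixed, hence lies in $\h{\G}\ltimes N$. Combined with the first two steps, this shows that $\widetilde{Q}:\h{\G}\ltimes M\to\h{\G}\ltimes N$ is the required norm-one projection.
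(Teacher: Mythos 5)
Your proposal is correct and takes essentially the same route as the paper: apply Lemma~\ref{lem:relativeamenability} with $M_{1}=\B(L^{2}(\G))$, identify $\h{\G}\ltimes M$ and $\h{\G}\ltimes N$ as the fixed-point algebras of the (normal) bidual action via Vaes's biduality theorems, and conclude that the equivariant $Q$ maps fixed points to fixed points, exactly as in the paper's proof. One small imprecision: the bidual action does not literally ``leave the $M$-leg untouched''--- transported to $\B(L^{2}(\G))\overline{\otimes}M$ it reads $\Ad_{\Sigma V^{*}\Sigma\otimes 1}\circ\mu$ with $\mu$ involving $\rho$ --- so the required equivariance splits into the $\mu$-part (your parenthetical, i.e.\ Lemma~\ref{lem:relativeamenability}) plus the commutation of $\i\otimes Q$ with the unitary conjugation acting on the other legs, which is precisely the two-step verification carried out in the paper.
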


\begin{proof}
By Lemma \ref{lem:relativeamenability}, there exists a norm-one projection
\begin{equation*}
Q : \B(L^{2}(\G))\overline{\otimes} M \rightarrow \B(L^{2}(\G))\overline{\otimes} N
\end{equation*}
which is equivariant with respect to $\mu = (\sigma\otimes \i)\circ(\i\otimes \rho)$ and such that
\begin{equation*}
Q(x_{1}\otimes x_{2}) = x_{1}\otimes P(x_{2}).
\end{equation*}
Let us consider the explicit $*$-isomorphisms of \cite[Thm 2.6]{vaes2001unitary}
\begin{equation*}
\begin{array}{ccccc}
\Phi_{M} & : & \B(L^{2}(\G))\overline{\otimes} M & \rightarrow & \G^{op}\ltimes(\h{\G}\ltimes M) \\
\Phi_{N} & : & \B(L^{2}(\G))\overline{\otimes} N & \rightarrow & \G^{op}\ltimes(\h{\G}\ltimes N)
\end{array}
\end{equation*}
Since any von Neumann algebra can be recovered in a crossed-product as the fixed points algebra under the dual action by \cite[Thm 2.7]{vaes2001unitary}, we only have to prove that the norm-one projection
\begin{equation*}
\widetilde{Q} = \Phi_{N}\circ Q\circ \Phi_{M}^{-1} : \G^{op}\ltimes(\h{\G}\ltimes M)\rightarrow \G^{op}\ltimes(\h{\G}\ltimes N)
\end{equation*}
is equivariant with respect to the the bidual action to conclude.

Let us use the notations of \cite{vaes2001unitary}. The bidual action $\h{\h{\rho}}$ on the double crossed product can be transported to $\B(L^{2}(\G))\overline{\otimes} M$ in the following way : there is an operator
\begin{equation*}
\mathcal{J} : L^{\infty}(\G) \rightarrow L^{\infty}(\G'^{\text{ op}})
\end{equation*}
and a map $\gamma = \Ad_{\Sigma V^{*}\Sigma\otimes 1}\circ\mu$, for some operator $V$, such that
\begin{equation*}
\h{\h{\rho}}\circ\Phi = (\mathcal{J}\otimes \Phi)\circ\gamma.
\end{equation*}
It is clear that the $\h{\h{\rho}}$-equivariance of $\widetilde{Q}$ is equivalent to the $\gamma$-equivariance of $Q$. We already know by Lemma \ref{lem:relativeamenability} that
\begin{equation*}
\mu\circ Q = (\i\otimes Q)\circ \mu
\end{equation*}
and, using the approximating projections $P'_{J}$, we see that $\i\otimes Q$ also commutes to $\Ad_{\Sigma V^{*}\Sigma\otimes 1}$. Hence,
\begin{equation*}
\gamma\circ Q = (\i\otimes Q)\circ\gamma.
\end{equation*}
\end{proof}

We can now prove our main result.

\begin{proof}[Proof of Theorem \ref{thm:amenableequivalence}]
The fact that the mean is invariant precisely means that $m$ is an equivariant norm-one projection to the von Neumann subalgebra $\C.1$ of $M$. Consequently, proposition \ref{prop:proj} applied to the triple $(\ell^{\infty}(\h{\G}/\h{\HH}), \C, m)$ yields a norm-one projection from $\h{\G}\ltimes\ell^{\infty}(\h{\G}/\h{\HH})$ to $\h{\G}\ltimes \C = L^{\infty}(\G)$. Since according e.g. to \cite[Rmk 4.3]{vaes2005new}, $\h{\G}\ltimes\ell^{\infty}(\h{\G}/\h{\HH})$ is Morita equivalent to $L^{\infty}(\HH)$, we have proven that $L^{\infty}(\G)\prec_{a}L^{\infty}(\HH)$. The conditional expectation from $L^{\infty}(\G)$ to $L^{\infty}(\HH)$ defined in \cite[Prop 2.2]{vergnioux2004k} then gives $L^{\infty}(\HH)\prec_{a} L^{\infty}(\G)$, concluding the proof. 
\end{proof}

\begin{rem}\label{rem:popa}
The basic construction $\langle L^{\infty}(\G), L^{\infty}(\HH)\rangle$ is naturally isomorphic to $\h{\G}\ltimes\ell^{\infty}(\h{\G}/\h{\HH})$ (see e.g. the beginning of Section $4$ of \cite{vaes2005new}). This means that if $\h{\G}$ is amenable relative to $\h{\HH}$, then $L^{\infty}(\G)$ is amenable relative to $L^{\infty}(\HH)$ in the sense of \cite{monod2003on}.
\end{rem}

Using the machinery of correspondences, we can also give a partial converse to the fact that if $\mathcal{R}$ weakly contains the trivial representations, then $\h{\G}$ is amenable relative to $\h{\HH}$.

\begin{prop}\label{prop:weakcorrespondence}
Let $\h{\G}$ be a discrete quantum group and let $\h{\HH}$ be a discrete quantum subgroup of $\h{\G}$ such that $\h{\G}$ is amenable relative to $\h{\HH}$. Then, the correspondence associated to the quasi-regular representation of $\h{\G}$ modulo $\h{\HH}$ weakly contains the identity correspondence.
\end{prop}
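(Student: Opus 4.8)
The plan is to recognize the correspondence attached to $\RR$ as the standard $L^{\infty}(\G)$-bimodule of the Jones basic construction, and then to feed in the norm-one projection already produced in the proof of Theorem \ref{thm:amenableequivalence}. Concretely, $\RR = U^{*}$ acts on the GNS space $K$ of the weight $\theta$ on $N := \ell^{\infty}(\h{\G}/\h{\HH})$, and since $U$ implements $\tau$, the crossed product $\h{\G}\ltimes N$ is naturally represented on $L^{2}(\G)\otimes K$. By Remark \ref{rem:popa} this crossed product is canonically isomorphic to $\langle L^{\infty}(\G), L^{\infty}(\HH)\rangle$, and the $L^{\infty}(\G)$-$L^{\infty}(\G)$-bimodule read off from this representation ($L^{\infty}(\G)\otimes 1$ acting on the left, the right action coming through the commutant) is exactly the correspondence associated with the quasi-regular representation. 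Thus the statement to prove becomes: the identity correspondence ${}_{L^{\infty}(\G)}L^{2}(\G)_{L^{\infty}(\G)}$ is weakly contained in ${}_{L^{\infty}(\G)}L^{2}\langle L^{\infty}(\G), L^{\infty}(\HH)\rangle_{L^{\infty}(\G)}$.

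I would then invoke the correspondence characterization of relative amenability. For an inclusion $N\subset M$ it is standard (Anantharaman-Delaroche \cite{anantharaman1995amenable}, Monod-Popa \cite{monod2003on}) that the identity correspondence ${}_{M}L^{2}(M)_{M}$ is weakly contained in ${}_{M}L^{2}\langle M, N\rangle_{M}\cong {}_{M}L^{2}(M)\otimes_{N}L^{2}(M)_{M}$ precisely when there exists a norm-one projection from $\langle M, N\rangle$ onto $M$. Such a projection is exactly what the proof of Theorem \ref{thm:amenableequivalence} delivers, via Proposition \ref{prop:proj} applied to the triple $(\ell^{\infty}(\h{\G}/\h{\HH}), \C, m)$ with the invariant mean $m$ supplied by relative amenability; equivalently, Remark \ref{rem:popa} records that $L^{\infty}(\G)$ is amenable relative to $L^{\infty}(\HH)$ in the sense of \cite{monod2003on}, which is by definition the existence of this projection. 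Combining the identification of the first paragraph with this equivalence closes the argument.

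I expect the two delicate points to be the following. The genuinely technical step is the identification above: one must check that the bimodule extracted from the representation of $\h{\G}\ltimes N$ on $L^{2}(\G)\otimes K$ really is the correspondence of $\RR$ and really is the standard correspondence of the basic construction, and this has to be carried out with the \emph{non-invariant} weight $\theta$ — only almost invariant by Remark \ref{rem:invariantweight} — and the attendant modular data, so the GNS identifications are not purely formal in the non-unimodular case. The second point to address is \emph{why} only the correspondence-level conclusion is available, rather than the stronger assertion that $\RR$ weakly contains the trivial representation: the latter would require asymptotically invariant unit vectors for $\RR$ inside $K$ itself, which a Day-type convexity argument upgrading $m$ to almost-invariant vectors cannot produce when $\h{\G}$ fails to be unimodular, whereas weak containment of correspondences tolerates the extra $L^{2}(\G)$-leg and survives. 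I therefore expect the correspondence identification together with this non-unimodular bookkeeping to be the main obstacle; the substantive content is otherwise entirely contained in the already-established norm-one projection.
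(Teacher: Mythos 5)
Your proposal is correct and follows essentially the same route as the paper: the paper also identifies the quasi-regular correspondence with $H\otimes\overline{H}\cong\ell^{2}(\h{\G}\ltimes\ell^{\infty}(\h{\G}/\h{\HH}))$ (the standard bimodule of the basic construction, made explicit via Vaes' description of the dual weight's GNS space) and then feeds the norm-one projection from Theorem \ref{thm:amenableequivalence} into Anantharaman-Delaroche's equivalence of left injectivity and left amenability of correspondences, which is exactly the characterization you invoke. The two technical points you flag (the bimodule identification and the unavailability of the stronger representation-level statement) are precisely the ones the paper handles, respectively, by citing \cite[Prop 3.10]{vaes2001unitary} and by stating only the correspondence-level conclusion.
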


\begin{proof}
Let us denote by $H$ the Hilbert space $L^{2}(\G)$ seen as the standard correspondence between $L^{\infty}(\G)$ and $L^{\infty}(\HH)$. By Theorem \ref{thm:amenableequivalence}, $H$ is a \emph{left injective correspondence} in the sense of \cite[Def 3.1]{anantharaman1995amenable}. By \cite[Prop 3.6]{anantharaman1995amenable}, it is also a \emph{left amenable correspondence} in the sense of \cite[Def 2.1]{anantharaman1995amenable}, i.e. the correspondence $H\otimes \overline{H}$ weakly contains the identity correspondence of $L^{\infty}(\G)$. Moreover, $H\otimes \overline{H}$ is precisely the Hilbert space $\ell^{2}(\h{\G}\ltimes \ell^{\infty}(\h{\G}/\h{\HH}))$ associated to the GNS construction for the dual weight $\tilde{\theta}$ on the crossed-product. Thanks to \cite[Prop 3.10]{vaes2001unitary}, the GNS construction for the dual weight may be explicitly described. In fact, the map
\begin{equation*}
\mathcal{I} : (a\otimes 1)\alpha(x)\mapsto a\otimes x
\end{equation*}
for $a\in L^{\infty}(\G)$ and $x\in \ell^{\infty}(\h{\G}/\h{\HH})$ extends to an isomorphism between $\ell^{2}(\h{\G}\ltimes \ell^{\infty}(\h{\G}/\h{\HH}))$ and $L^{2}(\G)\otimes \ell^{2}(\h{\G}/\h{\HH})$. Recall that $\RR$ denotes the adjoint of the unitary implementation of $\tau$. Let us endow the latter Hilbert space with the structure of a correspondence from $L^{\infty}(\G)$ to itself induced by the quasi-regular representation, i.e. the left action $\pi_{l}$ and the right action $\pi_{r}$ are given, for every $a\in L^{\infty}(\G)$, by
\begin{equation*}
\pi_{l}(a) = \RR^{*}(a\otimes 1)\RR\text{ and }\pi_{r}(a) = (Ja^{*}J)\otimes 1.
\end{equation*}
Then, the previous isomorphism intertwines these actions with the natural left and right actions of $L^{\infty}(\G)$ on $\ell^{2}(\h{\G}\ltimes \ell^{\infty}(\h{\G}/\h{\HH}))$ (inherited from its identification with $H\otimes \overline{H}$). Thus, the correspondence associated with the quasi-regular representation weakly contains the identity correspondence.
\end{proof}

\subsection{Finite index quantum subgroups}

The simplest source of examples of relatively amenable discrete quantum subgroups is of course \emph{finite index} quantum subgroups.

\begin{de}
A discrete quantum subgroup $\h{\HH}$ of a discrete quantum group $\h{\G}$ is said to have \emph{finite index} if the quotient von Neumann algebra $\ell^{\infty}(\h{\G}/\h{\HH})$ is finite-dimensional.
\end{de}

\begin{prop}
Let $\h{\HH}$ be a finite index quantum subgroup of a discrete quantum group $\h{\G}$. Then, $\h{\G}$ is amenable relative to $\h{\HH}$.
\end{prop}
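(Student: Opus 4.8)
The plan is to exhibit the invariant mean explicitly as a normalization of the weight $\theta$ on the quotient. First I would note that the finite index hypothesis says precisely that $\ell^{\infty}(\h{\G}/\h{\HH})$ is a finite-dimensional von Neumann algebra. Consequently the n.s.f. weight $\theta$ furnished in the preliminaries (the one satisfying $h_{L} = \theta\circ T$) is automatically a bounded, faithful, positive functional with $0 < \theta(1) < \infty$, since any normal semifinite weight on a finite-dimensional algebra is finite. I would then set $m := \theta(1)^{-1}\theta$, a faithful state on $\ell^{\infty}(\h{\G}/\h{\HH})$, which is the natural candidate for the invariant mean suggested by the ``almost invariant measure'' interpretation of Remark \ref{rem:invariantweight}.

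The core of the argument is to verify that $m$ satisfies $(\i\otimes m)\circ\tau(x) = m(x).1$, which amounts to showing that $\theta$ itself is $\tau$-invariant. For this I would combine the intertwining relation of Remark \ref{rem:invariantweight}, namely $\tau(T(y)) = (\i\otimes T)(\h{\D}(y))$, with the left-invariance of the Haar weight $h_{L}$ from Equation (\ref{eq:haarweight1}). Applying $\i\otimes\theta$ to both sides and using $\theta\circ T = h_{L}$ gives, for $y$ in the definition domain of $T$,
\begin{equation*}
(\i\otimes\theta)\bigl(\tau(T(y))\bigr) = (\i\otimes\theta)\bigl((\i\otimes T)(\h{\D}(y))\bigr) = (\i\otimes h_{L})(\h{\D}(y)) = h_{L}(y).1 = \theta(T(y)).1,
\end{equation*}
so that $\theta$ is invariant on every element of the form $T(y)$.

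It then remains to pass from the range of $T$ to all of $\ell^{\infty}(\h{\G}/\h{\HH})$, and here finite-dimensionality does the work: since $T$ is a faithful normal semifinite operator-valued weight onto the quotient, the image of its definition domain is weak-$*$ dense in $\ell^{\infty}(\h{\G}/\h{\HH})$, hence equal to it because the quotient is finite-dimensional. Thus $(\i\otimes\theta)(\tau(z)) = \theta(z).1$ for every $z$, and dividing by $\theta(1)$ yields the invariance of $m$, completing the proof. I expect the only genuinely delicate point to be the bookkeeping of weight domains in the displayed computation (ensuring that $y$ lies in the appropriate definition ideals so that every weight identity is literally meaningful and the normality of $\tau$ and $m$ is used to close the argument); the finite-dimensionality of the quotient collapses all the analytic subtleties that would otherwise arise, so every other step is essentially formal.
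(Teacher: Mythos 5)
Your proof is correct and takes essentially the same route as the paper's: normalize the weight $\theta$ (finite because the quotient is finite-dimensional), verify invariance on elements of the form $T(x)$ by combining the intertwining relation of Remark \ref{rem:invariantweight} with $h_{L}=\theta\circ T$ and the left-invariance of $h_{L}$, and use finite-dimensionality to pass to all of $\ell^{\infty}(\h{\G}/\h{\HH})$. On the one point where the two arguments differ, yours is the more careful one: the paper justifies restricting to elements $T(x)$ by asserting that the operator-valued weight $T$ itself is finite, which fails whenever $\h{\HH}$ is infinite (already for $2\Z\subset\Z$ one has $T(1)=\infty\cdot 1$), whereas what finite index actually yields is finiteness of $\theta$, and your density argument for the image of the definition domain of $T$ supplies the justification the paper leaves implicit.
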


\begin{proof}
Let us prove that the weight $\theta$ yields an invariant state. Indeed, the operator-valued weight $T : \ell^{\infty}(\h{\G}) \rightarrow \ell^{\infty}(\h{\G}/\h{\HH})$ is finite in that case and we can therefore only consider elements of the form $T(x)$. Using the equality $h_{L} = \theta\circ T$, we have
\begin{equation*}
(\i\otimes\theta)\circ \tau(T(x)) = (\i\otimes (\theta\circ T))\circ \tau(x) = (\i\otimes h_{L})\circ\h{\D}(x) = h_{L}(x).1 = \theta(T(x)).1
\end{equation*}
Thus, $\theta(1)^{-1}\theta$ is an invariant mean for $\tau$.
\end{proof}

\begin{ex}
Let $N\in \N$, let $\G$ be the free orthogonal quantum group $O_{N}^{+}$ and let $u =(u_{ij})_{1\leqslant i, j\leqslant N}$ be its fundamental representation. If $N = 2$, its dual discrete quantum group $\h{\G}$ is amenable, hence it is amenable relative to any quantum subgroup. If $N\geqslant 3$, consider the subalgebra of $L^{\infty}(O_{N}^{+})$ generated by the elements $u_{ij}u_{kl}$ for all $i, j, k, l$. This subalgebra is stable under the coproduct and thus defines a discrete quantum subgroup $\h{\HH}$ of $\h{\G}$ called its \emph{even part}. It is clear that under the usual identification $\Ir(\G) = \N$, $\Ir(\HH)$ corresponds to the even integers and it is not very difficult to see that $\ell^{\infty}(\h{\G}/\h{\HH}) = \C\oplus\C$. Thus $\h{\G}$ is amenable relative to $\h{\HH}$. Note that this quantum group is isomorphic to the quantum automorphism group of $M_{N}(\C)$ (with respect to a suitably chosen trace) according to \cite{banica1999symmetries}.
\end{ex}

As for classical groups, relative amenability becomes equivalent to finite index in the presence of Kazhdan's property (T).

\begin{prop}
Let $\h{\G}$ be a discrete quantum groups with Kazhdan's property (T) as defined in \cite[Def 3.1]{fima2008kazhdan} and let $\h{\HH}$ be a discrete quantum subgroup such that $\h{\G}$ is amenable relative to $\h{\HH}$. Then, $\h{\HH}$ has finite index in $\h{\G}$.
\end{prop}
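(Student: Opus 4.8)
The plan is to manufacture a genuine invariant vector for the quasi-regular representation $\RR$ by combining relative amenability with property (T), and then to read off finiteness of the index from its existence. Since $\h{\G}$ is amenable relative to $\h{\HH}$, Proposition \ref{prop:weakcorrespondence} already tells us that the correspondence attached to $\RR$, realised on $L^{2}(\G)\otimes \ell^{2}(\h{\G}/\h{\HH})$ with left action $\pi_{l}(a)=\RR^{*}(a\otimes 1)\RR$, weakly contains the identity correspondence of $L^{\infty}(\G)$. First I would translate this into a purely representation-theoretic statement: unpacking weak containment of the identity correspondence through the explicit actions $\pi_{l},\pi_{r}$ from the proof of Proposition \ref{prop:weakcorrespondence} should produce a net of almost central, almost tracial vectors, which upon restriction yields a net of almost invariant unit vectors in $\ell^{2}(\h{\G}/\h{\HH})$ for $\RR$. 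In other words, $\RR$ weakly contains the trivial representation of $\h{\G}$.

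Next I would invoke property (T). By \cite[Def 3.1]{fima2008kazhdan}, this is precisely the assertion that any unitary representation of $\h{\G}$ admitting almost invariant vectors must possess a nonzero invariant vector. Applying this to $\RR$ yields a nonzero $\h{\G}$-invariant vector $\xi\in \ell^{2}(\h{\G}/\h{\HH})$, equivalently a vector fixed by the unitary implementation of the action $\tau$.

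It then remains to deduce finite-dimensionality of $\ell^{\infty}(\h{\G}/\h{\HH})$ from $\xi$. The vector state $\w_{\xi}$ is normal, and invariance of $\xi$ makes it $\tau$-invariant, so we obtain a normal invariant state $\psi$ on the quotient. On the other hand, $\theta$ is, up to a scalar, the unique invariant weight for $\tau$ (Remark \ref{rem:invariantweight}); comparing the normal invariant state $\psi$ with $\theta$ forces $\theta$ to be finite, i.e. $\theta(1)<\infty$. Finally, using the known fact that a discrete quantum group with property (T) is of Kac type, the weight $\theta$ is tracial and assigns to the minimal central projections of the atomic algebra $\ell^{\infty}(\h{\G}/\h{\HH})\cong \prod_{i}\B(K_{i})$ weights bounded below; hence $\theta(1)<\infty$ leaves room for only finitely many such projections. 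Therefore $\ell^{\infty}(\h{\G}/\h{\HH})$ is finite-dimensional, which is exactly the statement that $\h{\HH}$ has finite index, recovering in the classical case the familiar fact that the constant function lies in $\ell^{2}(G/H)$ if and only if $[G:H]<\infty$.

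The main obstacle is twofold. The first delicate point is the passage between the correspondence formulation of Proposition \ref{prop:weakcorrespondence} and the representation-theoretic definition of property (T): one must check carefully that weak containment of the identity correspondence really does furnish almost invariant vectors for $\RR$ in the sense demanded by \cite{fima2008kazhdan}. The second, and harder, point is the final decoding step, where the conversion of finiteness of $\theta$ into finite-dimensionality of $\ell^{\infty}(\h{\G}/\h{\HH})$ genuinely requires unimodularity: a finite weight on a general atomic von Neumann algebra need not be supported on finitely many atoms, and it is exactly the tracial normalisation forced by the Kac property that rules this out.
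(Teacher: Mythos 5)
There is a genuine gap, and it sits in your very first step. You propose to convert the conclusion of Proposition \ref{prop:weakcorrespondence} (weak containment of the identity correspondence in the correspondence attached to $\RR$) into weak containment of the trivial representation in $\RR$ itself, i.e.\ into a net of almost invariant unit vectors in $\ell^{2}(\h{\G}/\h{\HH})$. But this translation is precisely the implication that the paper singles out as \emph{unknown}: in the discussion following Definition \ref{de:relativelyamenable}, it is stated that the classical equivalence between relative amenability and weak containment of the trivial representation in the quasi-regular representation ``is not known yet for discrete quantum groups'', and that this is exactly why Proposition \ref{prop:weakcorrespondence} is formulated at the level of correspondences rather than representations. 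So the statement you label as the ``first delicate point'' is not a technical verification you can carry out by unpacking $\pi_{l}$ and $\pi_{r}$; it is an open problem, and without it Definition 3.1 of \cite{fima2008kazhdan} has nothing to be applied to.

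The paper's proof avoids this by applying property (T) in the opposite order: first at the correspondence level, then passing to representations. By \cite[Thm 3.1]{fima2008kazhdan}, $L^{\infty}(\G)$ has property (T) in the sense of Connes--Jones \cite{connes1985property}, so \emph{weak} containment of the identity correspondence upgrades to \emph{genuine} containment. Only then does one pass from correspondences to representations, via \cite[Lem 7.1]{joita1992property}, a lemma which works for genuine containment and requires unimodularity --- which is automatic here by \cite[Prop 3.2]{fima2008kazhdan}. This produces an honest fixed vector for $\RR$, and finite-dimensionality of $\ell^{\infty}(\h{\G}/\h{\HH})$ is then exactly \cite[Lem 2.3]{fima2008kazhdan}. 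Note in passing that your diagnosis of where unimodularity matters is off: in the paper it is needed for the correspondence-to-representation passage (Joita's lemma), not for decoding the fixed vector into finite index; and your hand-made final step (normal invariant state versus the weight $\theta$, traciality, atoms of bounded mass) also leans on a uniqueness property of the invariant weight that Remark \ref{rem:invariantweight} does not actually provide, whereas the citation disposes of it directly.
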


\begin{proof}
Recall from Proposition \ref{prop:weakcorrespondence} that the correspondence associated with the quasi-regular representation weakly contains the identity correspondence. Since $L^{\infty}(\h{\G})$ has property (T) in the sense of \cite{connes1985property} by \cite[Thm 3.1]{fima2008kazhdan}, it actually contains the identity correspondence. Since any property (T) discrete quantum group is unimodular by \cite[Prop 3.2]{fima2008kazhdan}, we can apply \cite[Lem 7.1]{joita1992property} to conclude that $\mathcal{R}$ has a fixed vector
. This implies by \cite[Lem 2.3]{fima2008kazhdan} that the quotient $\ell^{\infty}(\h{\G}/\h{\HH})$ is finite-dimensional.
\end{proof}

\subsection{Applications}

Theorem \ref{thm:amenableequivalence} links relative amenability for discrete quantum groups and amen\-able equivalence of von Neumann algebras. We can thus now use the work of C. Anantharaman-Delaroche \cite{anantharaman1995amenable} on von Neumann algebras to derive permanence results. However, going back to the quantum group is not always possible at the present state of our knowledge, hence the restriction to unimodular quantum groups in the sequel.

\begin{cor}\label{thm:warelativelyamenable}
Let $\h{\G}$ be a discrete quantum group and let $\h{\HH}$ be a discrete quantum subgroup such that $\h{\G}$ is amenable relative to $\h{\HH}$, then $\Lambda_{cb}(L^{\infty}(\HH)) = \Lambda_{cb}(L^{\infty}(\G))$. If moreover $\h{\G}$ (and consequently $\h{\HH}$) is \emph{unimodular}, then $\Lambda_{cb}(\h{\G}) = \Lambda_{cb}(\h{\HH})$.
\end{cor}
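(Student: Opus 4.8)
The plan is to read off the first equality from Theorem \ref{thm:amenableequivalence} together with the fact that the Cowling-Haagerup constant is an invariant of amenable equivalence, and then to transport this equality to the quantum group level, in the unimodular case, by means of the Kraus-Ruan theorem.

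First I would apply Theorem \ref{thm:amenableequivalence} to obtain that $L^{\infty}(\HH)$ and $L^{\infty}(\G)$ are amenably equivalent. The crux is then that $\Lambda_{cb}$ of a von Neumann algebra does not change under amenable equivalence. Unwinding Definition \ref{de:amenablyequivalent}, this reduces to two standard facts about the Cowling-Haagerup constant. On the one hand, $\Lambda_{cb}$ is a Morita invariant: a net of finite-rank normal completely bounded multipliers survives amplification by $\B(H)$ and compression by a projection, so Morita equivalent algebras share the same constant. On the other hand, a norm-one projection $P : N_{1} \rightarrow M$ allows one to pull back such a net (composing with $P$ and the inclusion, both completely contractive), giving $\Lambda_{cb}(M) \leqslant \Lambda_{cb}(N_{1})$. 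Combining these, amenable domination $M \prec_{a} N$ yields $\Lambda_{cb}(M) \leqslant \Lambda_{cb}(N)$, and applying this in both directions to the amenably equivalent pair forces $\Lambda_{cb}(L^{\infty}(\HH)) = \Lambda_{cb}(L^{\infty}(\G))$. This is precisely the mechanism behind the classical co-amenability statement of \cite[Paragraphe 4.10]{anantharaman1995amenable}, which I would cite for the invariance itself.

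Assuming now that $\h{\G}$, and hence $\h{\HH}$, is unimodular, I would invoke the Kraus-Ruan theorem (Theorem \ref{thm:quantumwa}), which identifies $\Lambda_{cb}(\h{\G})$ with $\Lambda_{cb}(L^{\infty}(\G))$ and likewise for $\h{\HH}$. The chain
\begin{equation*}
\Lambda_{cb}(\h{\G}) = \Lambda_{cb}(L^{\infty}(\G)) = \Lambda_{cb}(L^{\infty}(\HH)) = \Lambda_{cb}(\h{\HH})
\end{equation*}
then closes the argument.

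The hard part is not the chain of equalities but the invariance of $\Lambda_{cb}$ under amenable equivalence, and in particular its good behaviour under Morita equivalence and under norm-one projections; this is where I expect to lean most heavily on \cite{anantharaman1995amenable}. Finally, the restriction to the unimodular case in the second assertion is forced by the tools available, since the identification $\Lambda_{cb}(\h{\G}) = \Lambda_{cb}(L^{\infty}(\G))$ of Theorem \ref{thm:quantumwa} is only known there.
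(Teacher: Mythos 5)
Your proposal follows exactly the paper's route: Theorem \ref{thm:amenableequivalence} gives the amenable equivalence of $L^{\infty}(\HH)$ and $L^{\infty}(\G)$, the invariance of $\Lambda_{cb}$ under amenable equivalence is taken from Anantharaman-Delaroche (the paper cites \cite[Thm 4.9]{anantharaman1995amenable} for precisely this), and the unimodular statement follows from the Kraus-Ruan theorem (Theorem \ref{thm:quantumwa}). One caveat worth noting: your heuristic unwinding of the invariance glosses over the fact that the norm-one projection in Definition \ref{de:amenablyequivalent} need not be normal, so composing it with an approximating net does not obviously preserve point-weak* convergence to the identity --- but since you defer to the reference for the invariance itself, the argument stands as the paper's does.
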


\begin{proof}
It was proved in \cite[Thm 4.9]{anantharaman1995amenable} that amenably equivalent von Neumann algebras have equal Cowling-Haagerup constant. We conclude by \cite[Thm 5.14]{kraus1999approximation}.
\end{proof}

\begin{rem}\label{rem:relativeC*}
A more direct (and C*-algebraic) proof of Corollary \ref{thm:warelativelyamenable} for groups is given in \cite[Prop 12.3.11]{brown2008finite}. However, it is quite ill-suited to the setting of quantum groups since it is based on the use of a section of the quotient which may fail to exist in a reasonable sense in the quantum case, for example if the subgroup is not \emph{divisible} in the sense of \cite[Def 4.1]{vergnioux2011k}.
\end{rem}

A similar statement holds for the Haagerup property.

\begin{cor}\label{rem:relativelyamenablehaagerup}
Let $\h{\G}$ be a \emph{unimodular} discrete quantum group and let $\h{\HH}$ be a discrete quantum subgroup such that $\h{\G}$ is amenable relative to $\h{\HH}$. Then, if $L^{\infty}(\HH)$ has the Haagerup property, $L^{\infty}(\G)$ also has the Haagerup property.
\end{cor}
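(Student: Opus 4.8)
The plan is to follow exactly the scheme of the proof of Corollary \ref{thm:warelativelyamenable}, replacing the invariance under amenable equivalence of the Cowling--Haagerup constant by the corresponding invariance of the Haagerup property. First I would record why unimodularity is needed at all: the theory of amenable correspondences developed in \cite{anantharaman1995amenable} is set in the \emph{finite} (tracial) framework, so one must know that the Haagerup property is meaningful for $L^{\infty}(\G)$ and $L^{\infty}(\HH)$ as tracial von Neumann algebras. Unimodularity of $\h{\G}$ forces the Haar state of $\G$ to be a trace, and the trace-preserving conditional expectation of Proposition \ref{prop:conditionalexpectationsubgroup} (which satisfies $h_{\HH}\circ\E_{\HH}=h_{\G}$) forces the Haar state of $\HH$ to be a trace as well; hence both $L^{\infty}(\G)$ and $L^{\infty}(\HH)$ are finite von Neumann algebras equipped with canonical traces, and the Haagerup property in the sense of Anantharaman-Delaroche applies to them.

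The second step is to apply Theorem \ref{thm:amenableequivalence}: since $\h{\G}$ is amenable relative to $\h{\HH}$, the algebras $L^{\infty}(\HH)$ and $L^{\infty}(\G)$ are amenably equivalent, and in particular $L^{\infty}(\G)\prec_{a} L^{\infty}(\HH)$, which is the domination established in the first half of the proof of that theorem. I would then invoke the Haagerup-property analogue of \cite[Thm 4.9]{anantharaman1995amenable}, namely that the Haagerup property is preserved under amenable domination of tracial von Neumann algebras. Concretely, unwinding Definition \ref{de:amenablyequivalent}, there is a von Neumann algebra $N_{1}$ which is Morita equivalent to $L^{\infty}(\HH)$, contains $L^{\infty}(\G)$, and carries a norm-one projection onto $L^{\infty}(\G)$. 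Since Morita equivalence preserves the Haagerup property, $N_{1}$ inherits it from $L^{\infty}(\HH)$; and since the Haagerup property passes to the range of a trace-preserving conditional expectation, $L^{\infty}(\G)$ acquires the Haagerup property, which is the desired conclusion.

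The main obstacle will be essentially bookkeeping rather than mathematics: pinning down the precise statement in \cite{anantharaman1995amenable} that transfers the Haagerup property itself (and not merely the value of $\Lambda_{cb}$) across an amenable domination, and checking that the direction actually used is $L^{\infty}(\G)\prec_{a} L^{\infty}(\HH)$, so that the Haagerup property travels from $L^{\infty}(\HH)$ to $L^{\infty}(\G)$ as required. No genuinely new quantum-group input is needed beyond Theorem \ref{thm:amenableequivalence}; everything else takes place on the von Neumann algebra side and follows Anantharaman-Delaroche's correspondence machinery, exactly as in the weak amenability case of Corollary \ref{thm:warelativelyamenable}.
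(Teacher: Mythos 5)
Your overall architecture is the same as the paper's: unimodularity makes the Haar states traces, Theorem \ref{thm:amenableequivalence} gives $L^{\infty}(\G)\prec_{a}L^{\infty}(\HH)$, and one then needs a result transferring the Haagerup property across amenable domination. The gap is in that last step, which you dismiss as ``bookkeeping''. No such transfer theorem exists in \cite{anantharaman1995amenable} --- her Theorem 4.9 concerns $\Lambda_{cb}$ only --- and your proposed two-line proof of it does not work. First, the algebra $N_{1}$ witnessing the domination is here the basic construction $\h{\G}\ltimes\ell^{\infty}(\h{\G}/\h{\HH})$, which is Morita equivalent to $L^{\infty}(\HH)$ but is in general a properly infinite, merely semifinite algebra (it is finite only when the quotient is finite-dimensional); so ``$N_{1}$ has the Haagerup property'' is not even meaningful in the tracial framework you set up without invoking a semifinite theory. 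Second, and fatally, the norm-one projection from $N_{1}$ onto $L^{\infty}(\G)$ produced by Proposition \ref{prop:proj} is built from the invariant mean $m$, which is a \emph{non-normal} state whenever $\h{\HH}$ has infinite index; the resulting projection is neither normal nor trace-preserving, so your appeal to ``the Haagerup property passes to the range of a trace-preserving conditional expectation'' does not apply. This non-normality is precisely what makes the transfer non-trivial: with a normal trace-preserving expectation one would essentially be in the finite-index situation, where there is nothing to prove. (A minor further inaccuracy: Anantharaman-Delaroche's amenable-correspondence machinery is not restricted to finite algebras; unimodularity is needed only because the von Neumann algebraic Haagerup property being used is a tracial notion.)

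The paper closes this gap by citing \cite[Thm 5.1]{bannon2011some}, which is exactly the missing ingredient: a finite von Neumann algebra amenably dominated by a finite von Neumann algebra with the Haagerup property again has the Haagerup property. That result requires genuine work with correspondences and standard forms to circumvent the non-normality of the projection, and it constitutes the real content of the corollary beyond Theorem \ref{thm:amenableequivalence}. Your proposal becomes correct if the final paragraph is replaced by an appeal to that theorem, keeping your (correct) verification that the direction used is $L^{\infty}(\G)\prec_{a}L^{\infty}(\HH)$, so that the property travels from $L^{\infty}(\HH)$ to $L^{\infty}(\G)$.
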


\begin{proof}
This is a combination of Theorem \ref{thm:amenableequivalence} and \cite[Thm 5.1]{bannon2011some}.
\end{proof}

We end with an application to hyperlinearity. A unimodular discrete quantum group $\h{\G}$ is said to be \emph{hyperlinear} if the von Neumann algebra $L^{\infty}(\G)$ tracially embeds into an ultraproduct of the hyperfinite II$_{1}$ factor.

\begin{cor}
Let $\h{\G}$ be a \emph{unimodular} discrete quantum group and let $\h{\HH}$ be a discrete quantum subgroup such that $\h{\G}$ is amenable relative to $\h{\HH}$. Then, $\h{\G}$ is hyperlinear if and only if $\h{\HH}$ is hyperlinear.
\end{cor}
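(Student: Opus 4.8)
The plan is to reduce the statement to the associated finite von Neumann algebras and then to transport hyperlinearity along the amenable equivalence produced by Theorem \ref{thm:amenableequivalence}. Since $\h{\G}$ is unimodular, both $L^{\infty}(\G)$ and $L^{\infty}(\HH)$ are finite von Neumann algebras equipped with their Haar traces, and by definition $\h{\G}$ (resp. $\h{\HH}$) is hyperlinear if and only if $L^{\infty}(\G)$ (resp. $L^{\infty}(\HH)$) admits a trace-preserving embedding into an ultrapower $R^{\omega}$ of the hyperfinite II$_{1}$ factor. Theorem \ref{thm:amenableequivalence} tells us that $L^{\infty}(\HH)$ and $L^{\infty}(\G)$ are amenably equivalent, so the corollary will follow once we know that amenable equivalence preserves embeddability into $R^{\omega}$. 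This is exactly the same scheme as Corollaries \ref{thm:warelativelyamenable} and \ref{rem:relativelyamenablehaagerup}, where the permanence of $\Lambda_{cb}$ and of the Haagerup property under amenable equivalence was quoted from \cite{anantharaman1995amenable} and \cite{bannon2011some}.

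One implication is elementary and does not even use relative amenability. Recall from Proposition \ref{prop:conditionalexpectationsubgroup} that $L^{\infty}(\HH)$ sits inside $L^{\infty}(\G)$ as the range of a faithful trace-preserving conditional expectation, so that the trace of $L^{\infty}(\G)$ restricts to that of $L^{\infty}(\HH)$. A trace-preserving inclusion $L^{\infty}(\G)\hookrightarrow R^{\omega}$ then restricts to a trace-preserving embedding of the subalgebra $L^{\infty}(\HH)$, and hence hyperlinearity of $\h{\G}$ immediately yields hyperlinearity of $\h{\HH}$.

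For the converse I would unwind the definition of amenable domination. By the proof of Theorem \ref{thm:amenableequivalence} together with Remark \ref{rem:popa}, we have $L^{\infty}(\G)\prec_{a}L^{\infty}(\HH)$, witnessed by the von Neumann algebra $N_{1}=\h{\G}\ltimes\ell^{\infty}(\h{\G}/\h{\HH})$: it is Morita equivalent to $L^{\infty}(\HH)$, it contains $L^{\infty}(\G)$, and Lemma \ref{lem:relativeamenability} provides a norm-one projection $N_{1}\to L^{\infty}(\G)$. The strategy is then in two moves. First, use that embeddability into $R^{\omega}$ is a Morita invariant, being stable under amplification $M\mapsto M\overline{\otimes}\B(\ell^{2})$ and under compression to corners, so that it passes from $L^{\infty}(\HH)$ to the finite corners of $N_{1}$. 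Second, use the norm-one projection onto $L^{\infty}(\G)$, together with the trace, to deduce that $L^{\infty}(\G)$ itself embeds. Packaged together, these two moves are precisely the assertion that amenable equivalence preserves the Connes embedding property, which I would invoke from the correspondence machinery of \cite{anantharaman1995amenable}.

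The main obstacle is the second move. The Morita-equivalent algebra $N_{1}$ is only semifinite rather than finite, and the projection $P:N_{1}\to L^{\infty}(\G)$ furnished by Lemma \ref{lem:relativeamenability} is a priori not normal, so one cannot naively restrict a tracial embedding of a subalgebra as in the easy direction. The clean way around this is the correspondence formulation of amenable equivalence already exploited in Proposition \ref{prop:weakcorrespondence}: the relation $L^{\infty}(\G)\prec_{a}L^{\infty}(\HH)$ produces a left injective, hence (by \cite[Prop 3.6]{anantharaman1995amenable}) left amenable, correspondence, and embeddability into $R^{\omega}$ can be read off from the weak containment of the trivial correspondence in an amenable one. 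I therefore expect that invoking the appropriate preservation theorem for the Connes embedding property under amenable equivalence, rather than trying to manipulate the non-normal projection by hand, is the decisive and cleanest step to finish the proof.
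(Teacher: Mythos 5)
Your first direction is fine: by Proposition \ref{prop:conditionalexpectationsubgroup} the Haar trace of $L^{\infty}(\G)$ restricts to that of $L^{\infty}(\HH)$, so a trace-preserving embedding $L^{\infty}(\G)\hookrightarrow R^{\omega}$ restricts to one of $L^{\infty}(\HH)$, and hyperlinearity passes to $\h{\HH}$. The gap is in the converse, which is the entire content of the corollary. You correctly identify the witnessing algebra $N_{1}=\h{\G}\ltimes\ell^{\infty}(\h{\G}/\h{\HH})$ and the two genuine obstacles (it is only semifinite, and the norm-one projection onto $L^{\infty}(\G)$ is a priori not normal), but you then dispose of them by invoking ``the appropriate preservation theorem for the Connes embedding property under amenable equivalence,'' attributed to \cite{anantharaman1995amenable}. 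No such theorem exists in that reference: Anantharaman-Delaroche's paper treats injectivity, semidiscreteness, the Cowling--Haagerup constant (the result quoted in Corollary \ref{thm:warelativelyamenable}) and related approximation properties, but contains nothing about embeddability into $R^{\omega}$, and hyperlinearity cannot be ``read off from weak containment of the trivial correspondence in an amenable one'' by any result proved there. So the decisive step of your argument is exactly the statement to be proved, deferred to a citation that does not support it.

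What the paper actually does is different in a small but essential way: by Remark \ref{rem:popa}, the crossed product $\h{\G}\ltimes\ell^{\infty}(\h{\G}/\h{\HH})$ is naturally isomorphic to the basic construction $\langle L^{\infty}(\G), L^{\infty}(\HH)\rangle$, so the norm-one projection coming from the invariant mean exhibits $L^{\infty}(\G)$ as amenable relative to $L^{\infty}(\HH)$ in the Monod--Popa sense \cite{monod2003on}, and the corollary is then a direct application of \cite[Thm 3.1]{wu2012co}, which is precisely the theorem that this form of relative amenability preserves embeddability into an ultrapower of the hyperfinite II$_{1}$ factor. In other words, your two ``moves'' (Morita invariance plus exploiting the norm-one projection) are the right heuristic, but carrying them out rigorously in the presence of a non-normal projection is the content of Wu's theorem, not of the correspondence machinery of \cite{anantharaman1995amenable}; without citing that result (or reproving it), your argument for the hard direction is incomplete.
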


\begin{proof}
This is a direct consequence of \cite[Thm 3.1]{wu2012co} since, by Remark \ref{rem:popa}, $L^{\infty}(\G)$ is amenable relative to $L^{\infty}(\HH)$.
\end{proof}

\bibliographystyle{amsplain}
\bibliography{../../quantum}

\end{document}